\documentclass{siamart}

\usepackage{graphicx} % needed for siamart
\usepackage[rightcaption]{sidecap}
\usepackage{wrapfig}
\usepackage{float}
\usepackage{imakeidx}
\usepackage{comment}
\usepackage{commath}
\usepackage[titletoc]{appendix}
\usepackage[english]{babel}
\usepackage{tabu}
\usepackage{xfrac}
\usepackage{graphicx,bm}
\usepackage{verbatim}
\usepackage{lscape}
\usepackage{relsize}
\usepackage{enumitem}
\usepackage{textcomp}
\usepackage{makecell}
\usepackage{longtable,tabularx}
\usepackage{multirow}
\usepackage{doi}
\usepackage{fancyhdr}
\usepackage{algpseudocode}
\usepackage{setspace}
\usepackage{footnote}
\PassOptionsToPackage{hyphens}{url}
\hypersetup{colorlinks,linkcolor={blue},citecolor={blue},urlcolor={blue},bookmarksopen=true}
\usepackage[numbers]{natbib}
\usepackage{mathtools}
\usepackage[disable]{todonotes}
\usepackage[framemethod=tikz]{mdframed}
\usepackage{booktabs,xcolor,siunitx}
\usepackage{soul}

\usepackage[makeroom]{cancel}

\usepackage{amssymb}  % needed for \mathbb

\newcommand{\ra}[1]{\renewcommand{\arraystretch}{#1}}

\newcommand{\overbar}[1]{\mkern 1.5mu\overline{\mkern-1.5mu#1\mkern-1.5mu}\mkern 1.5mu}
\newcommand{\f}[2]{\frac{#1}{#2}}
\newcommand{\mb}[1]{\mathbf{#1}}

\DeclareMathAlphabet\mathbfcal{OMS}{cmsy}{b}{n}
\renewcommand{\d}{\mathop{}\!\mathrm{d}} % total derivative

\definecolor{rev1}{HTML}{FF999A} % red
\definecolor{rev2}{HTML}{F3F298} % yellow
\definecolor{rev3}{HTML}{B2E0AE} % green
\definecolor{other}{HTML}{C8C7FF} % blue

\headers{Time-Spectral Resolvent Analysis}{M. Howell and S. He}

\makeatletter
\def\cref@override@label@type#1\@nil#2{#2}
\makeatother

\begin{document}

\title{Time-Spectral Resolvent Analysis for \\
Periodic Dynamical Systems}

\author{Max Howell\thanks{Department of Mechanical and Aerospace Engineering, University of Tennessee, Knoxville, TN 37996, USA (\email{mhowel30@vols.utk.edu}).}
\and Sicheng He\footnotemark[1]}

\maketitle

\begin{abstract}
Traditional resolvent analysis is a powerful framework for identifying the most amplified input-output structures in fluid flows from a stationary base state.
Extending this resolvent analysis to periodic base flows poses computational challenges due to quasi-periodic responses and expensive linearization around a time-varying base flow.
This work proposes a time-spectral resolvent operator formulated using the time-spectral method and Fourier collocation that operates directly in the time domain.
Rather than mapping between truncated Fourier coefficients as in frequency-domain approaches, the proposed operator maps forcing and response envelopes defined on a discrete temporal grid, enabling direct Jacobian evaluation at collocation points without computing Fourier coefficients of the base flow.
The time-spectral resolvent achieves spectral convergence and offers simplified implementation that integrates easily with existing scientific computing tools.
The time-spectral resolvent method is validated numerically in three examples including the parametrically forced Mathieu oscillator, the autonomous van der Pol oscillator and the complex Ginzburg-Landau partial differential equation to show that the proposed method accurately predicts the maximum energy amplification and optimal response mode when the system is subject to optimal quasi-periodic forcing.
The proposed framework provides a foundation for extending resolvent-based analysis and control to high-dimensional periodic dynamical systems.
\end{abstract}

\begin{keywords}
time-spectral method, resolvent analysis, periodic systems, Fourier collocation, parametric forcing
\end{keywords}

\begin{AMS}
37M05, 65P99, 37N10
\end{AMS}

\section{Introduction}

The classical resolvent analysis was formulated as a tool to identify the most amplified output structures of fluid flows when subject to a harmonic forcing.
\citet{Trefethen1993} first analyzed the linearized Navier-Stokes operator to show that large transient amplifications can still occur in linearly stable flows.
It was shown that these amplifications were the result of a linear mechanism caused by the non-normal nature of the Navier-Stokes operator.
\citet{Bamieh2001} then demonstrated that different input structures can amplify specific output modes and cause large energy amplifications for the Navier-Stokes operator, even in subcritical conditions.
The exact input modes which caused the most amplified responses in the velocity field for channel flows were then explored by \citet{JOVANOVI2005}, showing that the magnitude of amplification depends on the direction of forcing.
The resolvent analysis was then formalized in the context of turbulent pipe flow by~\citet{McKeon2010}, who showed that the most amplified input-output structures of a flow could be identified by the resolvent operator.
The resolvent operator maps between an external harmonic forcing and a deviation from a steady base flow and the singular value decomposition (SVD) of the resolvent operator identifies the most amplified input-output structures.
Subsequent studies applied the resolvent framework to turbulent channels, boundary layers, and the more advanced transonic buffet \cite{hwang2010linear,Sipp2012,Kojima2019}.
Recent reviews \cite{taira2017modal} have established resolvent analysis as a central tool for connecting linear amplification mechanisms with nonlinear flow dynamics.

The most basic form of resolvent analysis only considers the response of a linearized system in the vicinity of a steady-state solution.
However, it is often desired to analyze systems in the vicinity of an unsteady solution or a time-periodic base flow.
Extending resolvent analysis to flows with time-periodic base states has posed significant challenges such as quasi-periodic responses and the computational expense of linearizing around the entire base flow.
Also, as described by~\citet{Leclercq2023}, another challenge is that the response of the system is parameterized by the phase of the base flow when the forcing is first applied, making the response of the system to forcing non-unique.
Several approaches have been developed to address these challenges including mean flow and mean resolvent analysis, Koopman-based methods, and harmonic resolvent formulations that account for the time-varying nature of the response.

One method to address theses challenges is with the mean resolvent analysis proposed by~\citet{Leclercq2023,bongarzone2025}.
As opposed to mean-flow resolvent analysis which first linearizes the dynamics about the time-averaged steady mean flow (as in~\cite{Sipp2012}), the mean resolvent analysis first linearizes the dynamics about an ensemble of trajectories on the systems attractor.
The mean resolvent operator then predicts the mean response which is statistically optimal in the sense that it is the best linear time invariant (LTI) approximation to the response, thus eliminating the phase dependence of the problem.
While the mean resolvent approach accounts for interactions between the perturbations and the unsteady components of the base flow, it filters out the phase-dependent frequency modulation.
Consequently, the mean resolvent operator is unable to predict the full quasi-periodic response often desired when analyzing systems with time-periodic base flows.

% koopman resolvent
Recent theoretical work by \citet{Susuki2021} has extended Laplace domain analysis to nonlinear limit cycles using the Koopman resolvent.
This operator is the Laplace realization of the Koopman generator~\cite{Koopman1931}, providing a framework to analyze nonlinear autonomous systems without the need for linearization.
In the case of time-periodic base flows, the Koopman resolvent decomposes the spectrum into stationary modes which correspond to base-flow harmonics and nonstationary modes which are governed by Floquet exponents.
However, the rigorous application of such resolvent operators to quasi-periodic or rotating systems is non-trivial;~\citet{Eiter2022} demonstrated that for irrational frequency ratios, the associated operator may lack a uniform bound due to the accumulation of singularities.
These theoretical hurdles motivate the need for a formulation that can robustly capture the dynamics of time-varying systems by explicitly resolving the coupling between the base flow and perturbation harmonics.

% 3. harmonic resolvent for periodic base flows
Another approach to predict the exact phase dependent input-output structures for time varying base flows is the harmonic resolvent presented by~\citet{Padovan2020, Padovan2022}.
The harmonic resolvent formulation involves linearizing around the exact time-periodic base flow of the system then forming the truncated harmonic resolvent operator by taking the Fourier coefficients of the time-varying Jacobian.
This operator has a block-Toeplitz structure and maps between an exponentially modulated periodic (EMP) forcing and response.
When comparing the harmonic resolvent framework with that of the mean resolvent analysis~\cite{Leclercq2023}, it can be noticed that the mean resolvent operator is found along the diagonal block of the harmonic resolvent operator which does not contain any phase-dependent terms.
Because the harmonic resolvent maps between EMP signals, it is able to predict the full, phase dependent response, thereby rigorously extending the resolvent framework to time-periodic base flows.
However, the frequency-domain formulation requires computing Fourier coefficients of the periodic Jacobian and assembling block-Toeplitz operators, which complicates implementation and integration with time-domain solvers.

% The harmonic resolvent formalism explicitly accounts for the full periodic base flow and frequency interactions.~\citet{Padovan2020} developed this framework to analyze amplification mechanisms and cross-frequency interactions in nonlinear flows, showing how forcing at one frequency can induce responses at other harmonics through quadratic nonlinearity. 
% In later works~\cite{Padovan2022, Padovan2024} the harmonic resolvent framework was extended to include sub-harmonics, coupling a forcing frequency with a non-commensurate base frequency.
% This formulation constructs a resolvent operator with a block-Toeplitz structure that maps the Fourier coefficients of the forcing to the Fourier coefficients of the quasi-periodic response, thereby extending resolvent analysis beyond the single-frequency, steady-base setting.
% This analysis was later used by~\citet{Islam2024} to study frequency interactions in compressible cavity flow across an airfoil, where the authors identify how the nonlinear flow permits energy transfer across frequencies in a flow with multiple resonant frequencies.
% However, the frequency-domain formulation requires computing Fourier coefficients of the periodic Jacobian and assembling block-Toeplitz operators, which complicates implementation and integration with time-domain solvers.

% 4. More efficient methods
Beyond the formulation challenges, resolvent analysis for high-dimensional systems also faces computational challenges associated with handling large operators, such as memory allocation.
This has been addressed using matrix-free methods~\cite{Bagheri2009, MONOKROUSOS2010, Loiseau2018,Martini2021}, however these approaches only consider the mean-flow resolvent.
In order to address this challenge in the context of the harmonic resolvent,~\citet{Farghadan2024} proposed an efficient matrix-free time-stepping algorithm that avoids explicit formation of the large block-Toeplitz operator, making the method tractable for high-dimensional applications.
Other approaches include data-driven resolvent analysis~\cite{Herrmann2021} and spectral methods for computing the base flow~\cite{Mundis2013}.

% 5. Spectral methods for periodic systems
Rather than directly computing the base flow of a nonlinear system in the time domain using a time-marching solver, time-spectral methods~\cite{Trefethen2000, boyd2013chebyshev} have been shown to be more efficient in solving for the base flow.
While spectral methods have traditionally only been used for fully periodic flows, extensions have been made to quasi-periodic cases~\cite{Wirth1997} and later shown to scale up to aerospace problems~\cite{Mavriplis2011, Mundis2013}.~\citet{Junge2021} extended spectral methods to cases with multiple non-commensurate frequencies in high dimensional problems by mapping the quasi-periodic time dynamics onto a multi-dimensional torus.  
In this work, we leverage the accuracy and efficiency of the time-spectral method to extend harmonic resolvent analysis to discrete time.
This allows for the exploitation of the computational efficiency of the time-spectral method as well as spectral convergence to the true solution.

% 6. Contribution and layout
The contribution of this paper is to develop a time-domain resolvent formulation for periodic base flows of both autonomous and non-autonomous systems using the time-spectral method and Fourier collocation.
Unlike the frequency-domain harmonic resolvent, which requires computing Fourier coefficients of the periodic Jacobian and assembling block-Toeplitz operators, the time-domain formulation evaluates the Jacobian directly at collocation points.
This simplifies implementation and enables easy integration with existing scientific computing infrastructure, as many solvers already provide time integration and time-spectral capabilities.
The resulting operator directly maps the harmonic forcing envelope to the quasi-periodic response envelope on a discrete temporal grid, and properly accounts for non-commensurate forcing and base frequencies.
The time-spectral formulation is validated with direct time domain simulation to confirm the physical accuracy of the method.

This paper is organized as follows: in section~\ref{sec:classic_resolvent} we present the classical resolvent analysis for stationary base flows.
We then review the frequency domain harmonic resolvent operator from~\citet{Padovan2022} in section~\ref{sec:harmonic_resolvent_overview}.
The formulation for the full time-spectral resolvent (TSR) operator using the time-spectral method is presented in section~\ref{sec:disc_time_op} and the special handling and projection required for autonomous systems is detailed in~\ref{sec:auto_sysms}.
The spectral convergence properties of the TSR and harmonic resolvent operators are compared in section~\ref{sec:convergence}.
Finally, we validate our TSR operator against direct time accurate integration in section~\ref{sec:num_ex} for three examples: a linear system with zero base flow using the Mathieu oscillator, an autonomous non-linear system with the van der Pol oscillator, and a high dimensional autonomous system using the complex Ginzburg-Landau partial differential equation (PDE).

\section{Steady-State Resolvent Analysis}
\label{sec:classic_resolvent}

This section reviews the classical resolvent framework for systems with a steady base state, establishing the notation and concepts that will be extended to periodic base flows in subsequent sections.
We first introduce the dynamical system setup, then define the resolvent operator and its singular value decomposition following~\cite[Chapter~4.5]{Schmid2001}.

\subsection{Dynamical System Setup}
Consider a general nonlinear system given by
\begin{equation}
\dot{\mb{w}} = \mb{r}(\mb{w}, t)  ,
\label{eq:nonlinear_system}
\end{equation}
where $\mb{w} \in \mathbb{R}^n$ is the state variable with $n$ denoting the state dimension, $t \in \mathbb{R}$ is time, and $\mb{r}: \mathbb{R}^n \times \mathbb{R} \to \mathbb{R}^n$ is the governing nonlinear function.
The autonomous system $\mb{r}(\mb{w}, t)=\mb{r}(\mb{w})$ is a special case.

\subsection{Resolvent Operator}

A steady state solution can be found by finding a point $\bar{\mb{w}}$ where $\dot{\bar{\mb{w}}} = \mb{0}$, i.e., $\mb{r}(\bar{\mb{w}}) = \mb{0}$.
Linearizing about this steady point and subtracting the base flow gives dynamics of the form
\begin{equation}
\dot{\boldsymbol{\eta}} = \mb{J}\boldsymbol{\eta} + \mb{B}\mb{f} ,
\end{equation}
where $\boldsymbol{\eta} \in \mathbb{R}^n$ is the perturbation from the base state, $\mb{J} \in \mathbb{R}^{n \times n}$ is the Jacobian, $\mb{B} \in \mathbb{R}^{n \times m}$ is an input matrix that applies forcing to specific states with $m$ denoting the forcing dimension, and $\mb{f}=\hat{\mb{f}}e^{j\omega t}$ is a harmonic forcing term with amplitude $\hat{\mb{f}} \in \mathbb{C}^m$.
For fully actuated systems, $\mb{B}=\mb{I}$ and $m = n$.

For a linear system with harmonic forcing, the state will oscillate at the same frequency as the forcing with a phase shift.
The response can therefore be given as $\boldsymbol{\eta} = \hat{\boldsymbol{\eta}}e^{j\omega t}$.
Substituting and simplifying gives the dynamics
\begin{equation}
\label{eq:linear_freq}
j\omega\hat{\boldsymbol{\eta}} = \mb{J}\hat{\boldsymbol{\eta}} + \mb{B}\hat{\mb{f}},
\end{equation}
and the classic resolvent operator can then be found by rearranging the equation
\begin{equation}
\hat{\boldsymbol{\eta}} = \mb{R}\hat{\mb{f}}, \quad\quad \mathbf{R} = (j\omega \mb{I} - \mb{J})^{-1}\mb{B},
\end{equation}
where $\mb{R}\in\mathbb{R}^{n\times m}$ is the classical resolvent operator.

The resolvent gain $G(\omega)$, defined as the maximum energy amplification in the steady-state response after initial transients have decayed, is
\begin{equation}
G(\omega) = \max_{\hat{\mb{f}} \neq \mb{0}} \frac{\|\hat{\boldsymbol{\eta}}\|_2}{\|\hat{\mb{f}}\|_2} = \sigma_{\max}(\mb{R}).
\end{equation} 
The optimal forcing and response modes are the leading right and left singular vectors $\mb{v}_1$ and $\mb{u}_1$ from the SVD 
\begin{equation}
\mb{R} = \mb{U}\boldsymbol{\Sigma}\mb{V}^*,
\end{equation} 
where $\mb{U} \in \mathbb{C}^{n \times n}$ and $\mb{V} \in \mathbb{C}^{m \times m}$ are unitary matrices with columns $\mb{u}_i$ (response modes) and $\mb{v}_i$ (forcing modes), and $\boldsymbol{\Sigma} \in \mathbb{R}^{n \times m}$ is diagonal with singular values $\sigma_1 \geq \sigma_2 \geq \cdots \geq 0$.

\section{Harmonic Resolvent (HR)}
\label{sec:harmonic_resolvent_overview}

For systems with time-periodic base states, the resolvent framework must be extended to account for the time-varying Jacobian and the quasi-periodic response that arises when forcing and base flow frequencies are incommensurate.
In this section, we review the harmonic resolvent operator developed by~\citet{Padovan2020,Padovan2022}, which operates in the frequency domain using the harmonic balance method.

\subsection{Linear Time-Periodic Systems}

Consider a nonlinear system of form Eq.~\ref{eq:nonlinear_system} with a $T_0$-periodic orbit $\bar{\mb{w}}(t)$ with base frequency $\omega_0 = 2\pi/T_0$:
\begin{equation}
\bar{\mb{w}}(t) = \bar{\mb{w}}(t+T_0), \quad\quad \dot{\bar{\mb{w}}}(t) = \mb{r}(\bar{\mb{w}}(t),t).
\end{equation}
Linearizing about this orbit yields the forced linear time-periodic (LTP) system
\begin{equation}
\dot{\boldsymbol{\eta}}(t) = \mb{J}(t)\boldsymbol{\eta}(t) + \mb{B}\mb{f}(t),
\quad\quad \mb{J}(t) = \frac{\partial \mb{r}}{\partial \mb{w}}\bigg|_{\bar{\mb{w}}(t)},
\label{eq:periodic_resolvent}
\end{equation}
where $\mb{J}(t)$ is the $T_0$-periodic Jacobian and $\mb{B}$ is the input matrix, assumed constant here though the formulation extends to periodic $\mb{B}(t)$.
Quasi-periodic forcing $\mb{f}(t) = \sum_{k\in\mathbb{Z}}\hat{\mb{f}}_k e^{j(\omega_f+k\omega_0) t}$ with spectral content of both the base and forcing frequencies $\omega_f$ couples with the periodic base flow, producing a quasi-periodic response (see Appendix~\ref{app:quasi_periodic} for derivation).
Purely harmonic forcing at frequency $\omega_f$ is the special case where $\hat{\mb{f}}_k = \mb{0}$ for $k \neq 0$.
\begin{equation}
\boldsymbol{\eta}(t) = \sum_{k\in\mathbb{Z}} \hat{\boldsymbol{\eta}}_k e^{j(\omega_f + k\omega_0)t},
\label{eq:periodic_response}
\end{equation}
containing sidebands at frequencies $\omega_f + k\omega_0$.
The harmonic resolvent gain, accounting for this full quasi-periodic response after transients have decayed, is defined as
\begin{equation}
G(\omega_f) = \max_{\hat{\mb{f}} \neq \mb{0}} \frac{\|\hat{\boldsymbol{\eta}}\|_2}{\|\hat{\mb{f}}\|_2},
\label{eq:resolvent_def}
\end{equation}
where $\hat{\boldsymbol{\eta}} = [\ldots, \hat{\boldsymbol{\eta}}_{-1}^\top, \hat{\boldsymbol{\eta}}_0^\top, \hat{\boldsymbol{\eta}}_1^\top, \ldots]^\top$ and $\hat{\mb{f}} = [\ldots, \hat{\mb{f}}_{-1}^\top, \hat{\mb{f}}_0^\top, \hat{\mb{f}}_1^\top, \ldots]^\top$ stack the Fourier coefficients of the response and forcing.

\subsection{Harmonic Balance Method}
\label{sec:harmonic_balance}

The harmonic balance method provides a frequency-domain framework for analyzing periodic and quasi-periodic systems by expanding all time-varying quantities as Fourier series.
For a $T_0$-periodic function $g(t)$, the Fourier expansion is
\begin{equation}
g(t) = \sum_{k=-\infty}^{\infty} \hat{g}_k e^{jk\omega_0 t}, \quad\quad \hat{g}_k = \frac{1}{T_0}\int_0^{T_0} g(t) e^{-jk\omega_0 t} dt.
\end{equation}
The key property exploited by harmonic balance is that time differentiation becomes algebraic multiplication in frequency space: $\dot{g}(t) \leftrightarrow jk\omega_0 \hat{g}_k$.
Similarly, the product of two periodic functions corresponds to a convolution of their Fourier coefficients.

For the nonlinear system $\dot{\mb{w}} = \mb{r}(\mb{w}, t)$, periodic trajectories satisfy the harmonic balance governing equation
\begin{equation}
\label{eq:hb_governing}
jk\omega_0 \hat{\bar{\mb{w}}}_k = \hat{\mb{r}}_k(\hat{\bar{\mb{w}}}), \quad k = 0, \pm 1, \pm 2, \ldots,
\end{equation}
where $\hat{\bar{\mb{w}}}_k$ are the Fourier coefficients of the periodic orbit $\bar{\mb{w}}(t)$ and $\hat{\mb{r}}_k$ denotes the $k$-th Fourier coefficient of $\mb{r}(\bar{\mb{w}}(t),t)$.
This converts the continuous periodic orbit problem into a system of nonlinear algebraic equations in the frequency domain.

Applying harmonic balance to the LTP system Eq.~\ref{eq:periodic_resolvent}, we expand the periodic Jacobian as $\mb{J}(t) = \sum_{\ell} \hat{\mb{J}}_\ell e^{j\ell\omega_0 t}$.
Substituting the quasi-periodic response Eq.~\ref{eq:periodic_response} and collecting terms at each frequency $\omega_f + k\omega_0$ yields (see Appendix~\ref{app:quasi_periodic})
\begin{equation}
\label{eq:harmonic_balance}
j(\omega_f + k\omega_0)\hat{\boldsymbol{\eta}}_k = \sum_{\ell=-\infty}^{\infty} \hat{\mb{J}}_\ell\hat{\boldsymbol{\eta}}_{k-\ell} + \hat{\mb{B}}\hat{\mb{f}}_k.
\end{equation}
The convolution $\sum_\ell \hat{\mb{J}}_\ell \hat{\boldsymbol{\eta}}_{k-\ell}$ couples different response harmonics through the time-varying Jacobian coefficients.
Purely harmonic forcing at frequency $\omega_f$ is a special case where $\hat{\mb{f}}_k = \mb{0}$ for $k \neq 0$; the coupling through $\mb{J}(t)$ still produces a quasi-periodic response with nonzero $\hat{\boldsymbol{\eta}}_k$ at all sidebands.

\subsection{HR Operator Formulation}
\label{sec:hr_operator}

Truncating to sidebands $k = -n_{\text{har}}, \ldots, n_{\text{har}}$ gives the finite system $\mb{L}_{\text{HR}}^{(n_{\text{har}})}(\omega_f)\hat{\boldsymbol{\eta}} = \mb{B}_{\text{HR}}^{(n_{\text{har}})}\hat{\mb{f}}$, where $\hat{\mb{f}}$ stacks the Fourier coefficients of the forcing for all resolved harmonics and $\mb{B}_\text{HR}^{(n_\text{har})} = \mathrm{blkdiag}(\mb{B}, \dots, \mb{B})$ is a block diagonal matrix that applies the input matrix $\mb{B}$ to each harmonic forcing component.
When a truncation level $n_{\text{har}}$ is fixed, we abbreviate $\mb{L}_{\text{HR}}$, $\mb{B}_{\text{HR}}$, and $\mb{R}_{\text{HR}}$ for readability.
The operator $\mb{L}_{\text{HR}}^{(n_{\text{har}})}(\omega_f)$ has the block-Toeplitz structure
\begin{equation}
\label{eq:L_operator_matrix}
\mb{L}_{\text{HR}}^{(n_{\text{har}})}(\omega_f)
=
\begin{bmatrix}
\ddots & \vdots & \vdots & \vdots & \reflectbox{$\ddots$} \\[0.4em]
\cdots & \mb{A}_{-1} & -\hat{\mathbf{J}}_{-1} & -\hat{\mathbf{J}}_{-2} & \cdots \\[0.4em]
\cdots & -\hat{\mathbf{J}}_{1} & \mb{A}_{0} & -\hat{\mathbf{J}}_{-1} & \cdots \\[0.4em]
\cdots & -\hat{\mathbf{J}}_{2} & -\hat{\mathbf{J}}_{1} & \mb{A}_{1} & \cdots \\[0.4em]
\reflectbox{$\ddots$} & \vdots & \vdots & \vdots & \ddots
\end{bmatrix}, \quad\quad
\mb{A}_k = j(\omega_f + k\omega_0)\mathbf{I} - \hat{\mathbf{J}}_0.
\end{equation}
The diagonal blocks $\mb{A}_k$ correspond to the shifted frequency $\omega_f + k\omega_0$, while the off-diagonal blocks $-\hat{\mb{J}}_\ell$ couple different harmonics through the Fourier coefficients of the time-varying Jacobian.
The block-Toeplitz structure arises from the convolution in Eq.~\ref{eq:harmonic_balance} as each $(i,j)$ block depends only on the difference $i-j$.

In the case of non-autonomous systems, $\mb{L}_{\text{HR}}^{(n_{\text{har}})}$ is nonsingular and the truncated harmonic resolvent is \begin{equation}
\mb{R}_{\text{HR}}^{(n_{\text{har}})} = \left(\mb{L}_{\text{HR}}^{(n_{\text{har}})}\right)^{-1}\mb{B}_{\text{HR}}^{(n_{\text{har}})},
\end{equation}
with gain $G(\omega_f) = \sigma_{\text{max}}(\mb{R}_{\text{HR}}^{(n_{\text{har}})})$ that converges monotonically to the bi-infinite limit as $n_{\text{har}} \to \infty$.

When the underlying system is autonomous and the base flow is an exact periodic orbit (such as a limit cycle), the phase-shift invariance introduces a neutral Floquet mode that makes $\mb{L}_{\text{HR}}^{(n_{\text{har}})}$ singular at forcing frequencies which are integer multiples of the base frequency $\omega_f = k\omega_0$; in that case the projected resolvent described by~\citet{Padovan2022} is used.
In practice, truncating to $n_{\text{har}}$ harmonics yields an operator of size $(n \cdot n_{\text{har}})\times(n \cdot n_{\text{har}})$, where $n$ is the number of state variables.
Note that $n_{\text{har}}$ harmonics corresponds to $n_{\text{TS}} = 2n_{\text{har}} + 1$ collocation points in the time-spectral discretization.

\section{Time-Spectral Resolvent (TSR)}

\label{sec:disc_time_op}

In this section, we derive the time-spectral resolvent (TSR) operator using the time-spectral method.
Rather than mapping between the Fourier coefficients of the forcing and response, the TSR operator maps between the forcing and response envelope defined on a discrete grid with $n_{\text{TS}}$ collocation points.
This TSR operator is suitable for non-autonomous systems.
In the case of autonomous systems, singularities arise due to the phase ambiguity of the neutral Floquet mode and the TSR operator must be handled as described in Section~\ref{sec:auto_sysms}.

\subsection{Time-Spectral Method}
\label{sec:ts_method}

The time-spectral method discretizes periodic functions onto equally spaced collocation points and represents time derivatives using spectral differentiation.
For a $T_0$-periodic function, we define $n_{\text{TS}}$ collocation points at phases
\begin{equation}
\theta_j = \frac{2\pi j}{n_{\text{TS}}}, \quad j=0,\dots,n_{\text{TS}}-1,
\end{equation}
which converts the $T_0$-periodic continuous system into a $2\pi$-periodic discrete system.
We require $n_{\text{TS}}$ to be odd, with $n_{\text{TS}} = 2n_{\text{har}} + 1$, so that the discrete Fourier transform (DFT) resolves harmonics $k = -n_{\text{har}}, \ldots, n_{\text{har}}$ symmetrically about $k=0$.

The key ingredient is the Fourier spectral differentiation matrix $\mb{D} \in \mathbb{R}^{n_{\text{TS}} \times n_{\text{TS}}}$, which exactly differentiates trigonometric polynomials of degree up to $n_{\text{har}} = (n_{\text{TS}}-1)/2$.
The entries are given by~\cite{Trefethen2000}
\begin{equation}
\label{eq:diff_matrix}
\mb{D}_{jk} = \begin{cases}
\displaystyle \frac{1}{2}(-1)^{j-k}\csc\left(\frac{\pi(j-k)}{n_{\text{TS}}}\right), & j \neq k, \\[1em]
0, & j = k.
\end{cases}
\end{equation}
This matrix has the property that for a periodic function $g(\theta)$ sampled at $\theta_j$, the derivative at collocation points is $\dot{g}(\theta_j) = \sum_k \mb{D}_{jk} g(\theta_k)$.
The matrix $\mb{D}$ is skew-symmetric and circulant, with eigenvalues $\{0, \pm j, \pm 2j, \ldots, \pm \frac{n_{\text{TS}}-1}{2}j\}$ corresponding to the representable frequencies.

For the nonlinear system $\dot{\mb{w}} = \mb{r}(\mb{w}, t)$, periodic trajectories satisfy the time-spectral governing equation
\begin{equation}
\label{eq:ts_governing}
\omega_0 (\mb{D} \otimes \mb{I}_n) \bar{{\mb{w}}} = \mb{r}(\bar{{\mb{w}}},t),
\end{equation}
where $\bar{{\mb{w}}} = [\bar{\mb{w}}(\theta_0)^\top, \ldots, \bar{\mb{w}}(\theta_{n_{\text{TS}}-1})^\top]^\top$ stacks the periodic orbit $\bar{\mb{w}}(t)$ at all collocation points and $\mb{r}({\bar{\mb{w}}},t)$ is evaluated point-wise.
This converts the continuous periodic orbit problem into a system of $n \cdot n_{\text{TS}}$ nonlinear algebraic equations.

\subsection{TSR Operator Formulation}

The quasi-periodic forcing and response Eq.~\ref{eq:periodic_response} can equivalently be written using the carrier-envelope decomposition
\begin{equation}
\boldsymbol{\eta}(t) = \hat{\boldsymbol{\eta}}(t)e^{j\omega_f t},\quad \mb{f}(t) = \hat{\mb{f}}(t)e^{j\omega_f t}
\end{equation}
where $\hat{\boldsymbol{\eta}}(t) = \sum_{k=-n_{\text{har}}}^{n_{\text{har}}} \hat{\boldsymbol{\eta}}_k e^{jk\omega_0 t}$ (and equivalently for $\hat{\mb{f}}(t)$) is a $T_0$-periodic envelope.
Substituting into the LTP system Eq.~\ref{eq:periodic_resolvent} gives
\begin{equation}
\label{eq:envelope_ode}
\dot{\hat{\boldsymbol{\eta}}}(t) = \big(\mb{J}(t) - j\omega_f\mb{I}\big)\hat{\boldsymbol{\eta}}(t) + \mb{B}\hat{\mb{f}}.
\end{equation}
This is a linear time-periodic ordinary differential equation (ODE) for the envelope $\hat{\boldsymbol{\eta}}(t)$, which can be discretized using the time-spectral method.

Evaluating at $n_{\text{TS}}$ collocation points yields the stacked forcing and state vectors
\begin{equation}
\tilde{\boldsymbol{\eta}} = [\hat{\boldsymbol{\eta}}(\theta_0)^\top, \ldots, \hat{\boldsymbol{\eta}}(\theta_{n_{\text{TS}}-1})^\top]^\top, \quad \tilde{\mb{f}} = [\hat{\mb{f}}(\theta_0)^\top, \ldots, \hat{\mb{f}}(\theta_{n_{\text{TS}}-1})^\top]^\top.
\end{equation}
Using the differentiation matrix Eq.~\ref{eq:diff_matrix}, the derivative becomes $\dot{\tilde{\boldsymbol{\eta}}} = \omega_0(\mb{D}\otimes\mb{I}_n)\tilde{\boldsymbol{\eta}}$.
The time-spectral system is $\mb{L}_{\text{TS}}\tilde{\boldsymbol{\eta}} = \mb{B}_{\text{TS}}\hat{\mb{f}}$, where
\begin{equation}
\label{eq:input_mat}
\mb{B}_{\text{TS}}^{(n_{\text{TS}})} = \mathrm{blkdiag}(\mb{B},\dots,\mb{B})
\end{equation}
applies the forcing to each discretized point in $\tilde{\mb{f}}$.
Note that if it is desired to restrict the forcing to be harmonic where $\hat{\mb{f}}$ is constant, the time spectral input matrix is $\mb{B}_{\text{TS}}^{(n_{\text{TS}})} = 1/\sqrt{n_{\text{TS}}} \otimes \mb{B}$. For more details on this formulation, see Appendix~\ref{app:parseval}.

The matrices $\mb{L}_\text{TS}$ and $\mb{J}_\text{TS}$ are defined as
\begin{equation}
\mb{L}_{\text{TS}}^{(n_{\text{TS}})}(\omega_f) = j\omega_f\mb{I} - \mb{J}_{\text{TS}}, \quad\quad \mb{J}_{\text{TS}} = \mb{J}_{\text{block}} - \omega_0\big(\mb{D}\otimes\mb{I}_n\big) ,
\end{equation}
with $\mb{J}_{\text{block}} = \mathrm{blkdiag}(\mb{J}_0, \ldots, \mb{J}_{n_{\text{TS}}-1})$, where $\mb{J}_k = \partial\mb{r}/\partial\mb{w}|_{\bar{\mb{w}}(\theta_k)}$ denotes the Jacobian evaluated at collocation point $\theta_k$.
Analogous to the block-Toeplitz structure of $\mb{L}_{\text{HR}}$ in Eq.~\ref{eq:L_operator_matrix}, the time-spectral operator has the explicit structure
\begin{equation}
\label{eq:L_TS_matrix}
\resizebox{0.9\columnwidth}{!}{$\displaystyle
\mb{L}_{\text{TS}}^{(n_{\text{TS}})} =
\begin{bmatrix}
j\omega_f\mb{I} - \mb{J}_0 & & \\
& \ddots & \\
& & j\omega_f\mb{I} - \mb{J}_{n_{\text{TS}}-1}
\end{bmatrix}
+ \omega_0
\begin{bmatrix}
\mb{D}_{00}\mb{I} & \cdots & \mb{D}_{0,n_{\text{TS}}-1}\mb{I} \\
\vdots & \ddots & \vdots \\
\mb{D}_{n_{\text{TS}}-1,0}\mb{I} & \cdots & \mb{D}_{n_{\text{TS}}-1,n_{\text{TS}}-1}\mb{I}
\end{bmatrix}
$}.
\end{equation}
The first term is block-diagonal containing $(j\omega_f\mb{I} - \mb{J}_k)$ at each collocation point $\theta_k$, while the second term is block-circulant representing the spectral differentiation.
In contrast to the block-Toeplitz structure of $\mb{L}_{\text{HR}}^{(n_{\text{har}})}$, which couples harmonics through off-diagonal Fourier coefficients $\hat{\mb{J}}_\ell$ ($\ell \neq 0$), the TSR operator $\mb{L}_{\text{TS}}^{(n_{\text{TS}})}$ couples collocation points through the differentiation matrix $\mb{D}$.

While both the HR and TSR operators result in systems of similar dimension, the main computational advantage in the TSR operator is the avoidance of explicitly calculating the Fourier coefficients $\hat{\mb{J}}_n$.
The HR operator requires precalculation of these Fourier coefficients via integration, which can be computationally expensive for high dimensional systems.
In contrast, the TSR operator relies on the direct evaluation of the Jacobian at discrete collocation points, avoiding the extra step of decomposing the linearized flow into Fourier modes.

The truncated full TSR operator can then be written as
\begin{equation}
\label{eq:discrete_op}
\tilde{\boldsymbol{\eta}} = \mb{R}_{\text{TS}}^{(n_{\text{TS}})}\tilde{\mb{f}}, \quad\quad \mb{R}_{\text{TS}}^{(n_{\text{TS}})} = \left(\mb{L}_{\text{TS}}^{(n_{\text{TS}})}\right)^{-1}\mb{B}_{\text{TS}}^{(n_{\text{TS}})} ,
\end{equation}
which maps from an optimal forcing $\tilde{\mb{f}}$ to the resulting response $\tilde{\boldsymbol{\eta}}$.
In this formulation, the full harmonic resolvent gain Eq.~\ref{eq:resolvent_def} at resolution $n_{\text{TS}}$ is the maximum singular value $\sigma_{\text{max}}(\mb{R}_{\text{TS}}^{(n_{\text{TS}})})$.
When a particular $n_{\text{TS}}$ is fixed, we abbreviate $\mb{L}_{\text{TS}}$, $\mb{B}_{\text{TS}}$, and $\mb{R}_{\text{TS}}$ for readability.
The left singular vector $\mb{u}_1$ corresponds to the optimal response envelope, while the right singular vector $\mb{v}_1$ corresponds to the optimal forcing.
The response can be reconstructed by interpolating $\mb{u}_1$ and modulating with the carrier wave $e^{j\omega_f t}$.

\subsection{Normalization of the Forcing and Response Modes}
\label{sec:normalization}

The singular vectors from the SVD are non-unique up to an arbitrary phase $e^{j\phi}$.
To fix this ambiguity, we normalize the response mode $\mb{u}_1$ and rotate it such that it is aligned with the base flow, then recover the forcing mode via
\begin{equation}
\label{eq:norm_forcing}
\mb{v}_1 = \frac{1}{\sigma_1}\mb{R}_{\text{TS}}^* \mb{u}_1.
\end{equation}
This preserves the physical phase relationship between forcing and response, ensuring that driving the system with $\mb{v}_1$ produces the response $\sigma_1 \mb{u}_1$.
For more details, see \citet{Kanchi2025}.

\subsection{Mapping Modes to the Time Domain}
\label{sec:time_domain_mapping}

The optimal response $\mb{u}_1$ and forcing $\mb{v}_1$ envelopes from the SVD are sampled at discrete collocation points.
To reconstruct the continuous quasi-periodic signals, standard trigonometric interpolation~\cite{Trefethen2000} is used to obtain the envelopes $\hat{\boldsymbol{\eta}}(\theta)$ and $\hat{\mb{f}}(\theta)$ at any phase $\theta \in [0, 2\pi)$.
The envelopes are then modulated with the carrier wave to obtain
\begin{equation}
\boldsymbol{\eta}(t) = \hat{\boldsymbol{\eta}}(\omega_0 t) e^{j\omega_f t}, \quad \mb{f}(t) = \hat{\mb{f}}(\omega_0 t) e^{j\omega_f t}
\label{eq:forcing_reconstruction}
\end{equation}
where the physical response $\mathrm{Re}(\boldsymbol{\eta}(t))$ and forcing $\mathrm{Re}(\mb{f}(t))$ are the real parts of the signals.

\section{TSR for Autonomous Systems}

\label{sec:auto_sysms}

In contrast to non-autonomous systems where periodicity arises from an external driver, autonomous systems that exhibit limit cycle oscillations (LCOs) possess phase invariance associated with a neutral Floquet mode.
This means that for any base flow solution $\overbar{\mb{w}}(t)$, a time-shifted solution $\overbar{\mb{w}}(t + \phi)$ is also valid for any phase shift $\phi$.
This results in the time-spectral operator $\mb{L}_{\text{TS}}$ becoming singular at resonant frequencies for autonomous systems.
In this section, we follow the approach by~\citet{Padovan2022} to project $\mb{L}_\text{TS}$ onto a subspace $\Sigma_\text{TS}$ in which it is bijective and invariant.
In this subspace, the governing equation $\mb{L}_\text{TS}\tilde{\boldsymbol{\eta}} = \mb{B}_\text{TS}\tilde{\mb{f}}$ has a unique solution $\tilde{\boldsymbol{\eta}}_\Sigma \in \Sigma_\text{TS}$ when the forcing $\tilde{\mb{f}} \in \Sigma_\text{TS}$, denoted $\tilde{\mb{f}}_\Sigma$.

This section is outlined as follows.
First, we establish that for a neutrally stable linear system, the null space of the time-spectral Jacobian $\mathcal{J}_\text{TS}$ is one dimensional.
We also derive that the right and left null vectors which span this null space are the neutral Floquet mode $\tilde{\mb{p}}_0$ and its adjoint $\tilde{\mb{q}}_0$.
Next, we show that the null space of $\mb{L}_\text{TS}$ is also one dimensional and its null space is spanned by the modulated neutral $\tilde{\mb{p}}_k$ and adjoint $\tilde{\mb{q}}_k$ Floquet modes.
Using this information, we define the subspace $\Sigma_\text{TS}$ to be the range of the oblique projector matrix $\mb{P}_{\omega} = \mb{I} - \tilde{\mb{p}}_{\omega} \tilde{\mb{q}}_{\omega}^*$.
Finally, we prove that the linear system $\mb{L}_\text{TS}\tilde{\boldsymbol{\eta}}_\Sigma = \mb{B}_\text{TS}\tilde{\mb{f}}_\Sigma$ has exactly one unique solution.

\subsection{Null Space of the Time-Spectral Jacobian}
\label{sec:singularity_jacobian}

For an autonomous system $\dot{\mb{w}} = \mb{r}(\mb{w})$, the orbit tangent vector to the base flow $\tilde{\mb{p}}_0(t) \coloneqq \d \overbar{\mb{w}}/\d t$ satisfies the linearized equation 
\begin{equation}
\dot{\tilde{\mb{p}}}_0 = \frac{\partial \mb{r}}{\partial \mb{w}}\bigg|_{\overbar{\mb{w}}} \tilde{\mb{p}}_0 = \mb{J}(t)\tilde{\mb{p}}_0(t).
\end{equation}
This follows directly from the fact that the base flow satisfies $\dot{\overbar{\mb{w}}}(t) = \mb{r}(\overbar{\mb{w}}(t))$.
Differentiating in time and using the chain rule gives $\ddot{\overbar{\mb{w}}}(t) = \left.\frac{\partial \mb{r}}{\partial \mb{w}}\right|_{\overbar{\mb{w}}(t)}\dot{\overbar{\mb{w}}}(t)$.
With the definition $\tilde{\mb{p}}_0(t) \coloneqq \dot{\overbar{\mb{w}}}(t)$, this is exactly the variational equation $\dot{\tilde{\mb{p}}}_0(t) = \mb{J}(t)\tilde{\mb{p}}_0(t)$.
This vector is $T_0$-periodic with the base flow and is normalized to define a unique scaling $\tilde{\mb{p}}_0 \gets \tilde{\mb{p}}_0 / \|\tilde{\mb{p}}_0\|$.
In Floquet analysis which accepts solutions of the form $e^{\lambda t}\mb{p}(t)$, a periodic mode which satisfies $\mb{p}(t) = \mb{p}(t + T)$ corresponds to a neutral mode with Floquet exponent $\lambda = 0$.
Thus, the orbit tangent vector $\tilde{\mb{p}}_0$ is a neutral Floquet mode corresponding to the neutral stability of the linearized dynamics.

In the time-spectral formulation, the time-spectral Jacobian operator $\mathcal{J}_\text{TS}$ is defined by its action on an $n$-vector-valued periodic mode $\mb{y}(\theta)$ via
\begin{equation}
\mathcal{J}_\text{TS}\mb{y} \coloneqq \mb{J}(t)\mb{y} - \omega_0\f{\d\mb{y}}{\d\theta},
\end{equation}
where $\mb{y}(\theta)\in\mathbb{C}^n$ and $\mb{J}(t)\in\mathbb{C}^{n\times n}$, so both terms on the right-hand side have the same dimension as $\mb{y}$.
Here $\theta = \omega_0 t$ is a dimensionless phase, so $\omega_0\,\d\mb{y}/\d\theta=\d\mb{y}/\d t$ and the definition can equivalently be written as $\mathcal{J}_\text{TS}\mb{y} = \mb{J}(t)\mb{y} - \dot{\mb{y}}$.
In the discretized collocation setting with $n_\text{TS}$ points, we represent $\mb{y}$ by the stacked vector $\tilde{\mb{y}}\in\mathbb{C}^{n n_\text{TS}}$, represent $\mb{J}(t)$ by $\mb{J}_\text{block}$, and represent $\mb{J}_\text{TS}$ by its matrix realization in $\mathbb{C}^{(n n_\text{TS})\times(n n_\text{TS})}$ (see Eq.~\ref{eq:L_TS_matrix}).
We denote this collocation matrix realization by $\mb{J}_\text{TS}$.
The neutral Floquet mode $\tilde{\mb{p}}_0$ satisfies $\dot{\tilde{\mb{p}}}_0 = \mb{J}(t)\tilde{\mb{p}}_0$, which can be rearranged as $\mb{J}(t)\tilde{\mb{p}}_0 - \dot{\tilde{\mb{p}}}_0 = 0$.
From the definition of the time-spectral Jacobian, this is equivalent to
\begin{equation}
\mathcal{J}_\text{TS}\tilde{\mb{p}}_0 = 0.
\end{equation}
Therefore, the neutral Floquet mode falls in the null space of $\mathcal{J}_\text{TS}$ and implies that the collocation matrix $\mb{J}_\text{TS}$ has a corresponding zero eigenvalue.
Since Floquet pairs are non-unique under the shift $\lambda \mapsto \lambda + jk\omega_0$ with a corresponding rephasing of the periodic part $\mb{p}(t)\mapsto \mb{p}(t)e^{-jk\omega_0 t}$, the neutral eigenpair $(0,\tilde{\mb{p}}_0)$ generates a vertical ladder of shifted copies $(jk\omega_0, \tilde{\mb{p}}_0(\theta)e^{-jk\theta})$ for any integer $k$.
This can be shown by applying the operator on the shifted modes
\begin{equation}
\begin{aligned}
\mathcal{J}_\text{TS} (\tilde{\mb{p}}_0 e^{-jk\theta}) &= \mb{J}(t)(\tilde{\mb{p}}_0 e^{-jk\theta}) - \omega_0 \frac{\d}{\d\theta}(\tilde{\mb{p}}_0 e^{-jk\theta}) \\
&= \left( \mb{J}(t)\tilde{\mb{p}}_0 - \omega_0 \frac{\d\tilde{\mb{p}}_0}{\d\theta} \right) e^{-jk\theta} + j k \omega_0 \tilde{\mb{p}}_0 e^{-jk\theta} ,
\end{aligned}
\end{equation}
since the first parenthesis equals $\mathcal{J}_\text{TS}\tilde{\mb{p}}_0 = 0$.
This confirms the modulated neutral Floquet modes $\tilde{\mb{p}}_k = \tilde{\mb{p}}_0 e^{-jk\theta}$ are right eigenvectors of $\mathcal{J}_\text{TS}$ with a corresponding eigenvalue $jk\omega_0$.
In the collocation discretization, the matrix $\mb{J}_\text{TS}$ represents $\mathcal{J}_\text{TS}$ on the $n_\text{TS}$-point trigonometric polynomial subspace.
Therefore, for the unique DFT modes $|k| \leq (n_\text{TS}-1)/2$, the corresponding eigenvalues $jk\omega_0$ appear in the spectrum of $\mb{J}_\text{TS}$ (with aliasing outside this range).
A short discrete proof is provided in Appendix~\ref{app:shifted_eigenpairs}.

Similarly, there exists an adjoint neutral mode $\tilde{\mb{q}}_0 = \mathrm{Adj}(\tilde{\mb{p}}_0)$ which is the corresponding left null vector of $\mathcal{J}_\text{TS}$ satisfying $\mathcal{J}_\text{TS}^*\tilde{\mb{q}}_0 = 0$.
Analogous to the neutral Floquet mode, the modulated adjoint modes $\tilde{\mb{q}}_k = \tilde{\mb{q}}_0e^{-jk\theta}$ also appear as left eigenvectors of $\mathcal{J}_\text{TS}$ with eigenvalues $-jk\omega_0$ for the resolved modes $|k| \leq (n_\text{TS}-1)/2$.
We note that the left and right eigenvectors of $\mathcal{J}_\text{TS}$ for a bi-orthogonal bases leading to the normalization convection
\begin{equation}
\langle \tilde{\mb{p}}_0, \tilde{\mb{q}}_0 \rangle = \int_{0}^{T_0} \tilde{\mb{p}}_0^*(t)\tilde{\mb{q}}_0(t)\,\mathrm{d}t = 1,
\end{equation}
where $\langle \tilde{\mb{p}}_0, \tilde{\mb{q}}_0 \rangle$ is the inner product of the adjoint and neutral modes.
It follows that for the modulated modes $\langle \tilde{\mb{p}}_k, \tilde{\mb{q}}_k \rangle = 1$
For information on how to efficiently compute the neutral and adjoint modes even in the case of a under-resolved base flow, we refer the reader to Appendix~\ref{app:modes}.

\subsection{Null Space of the Operator $\mb{L}_\text{TS}$}

At resonance, the time-spectral resolvent operator $\mb{L}_\text{TS} = j\omega_f \mb{I} - \mb{J}_\text{TS}$ becomes singular whenever $j\omega_f$ coincides with an eigenvalue of $\mb{J}_\text{TS}$.
A left null vector of $\mb{L}_\text{TS}$ satisfies
\begin{equation}
\begin{aligned}
\mb{L}_\text{TS}^*\mb{v} &= (-j\omega_f\mb{I} - \mb{J}^*_\text{TS})\mb{v} = 0\\
\mb{J}_\text{TS}^*\mb{v} &= -j\omega_f\mb{v},
\end{aligned}
\end{equation}
indicating that any left null vector of $\mb{L}_\text{TS}$ must be a left eigenvector of $\mb{J}_\text{TS}$ with corresponding eigenvalue $-j\omega_f$.
Using the resonance condition $\omega_f = k\omega_0$ it can be seen that $\mb{J}_\text{TS}$ has exactly one eigenvalue $-jk\omega_0$ with corresponding eigenvalue $\tilde{\mb{q}}_k$.
It can similarly be shown that the corresponding right null vector of $\mb{L}_\text{TS}$ is the modulated neutral mode $\tilde{\mb{p}}_0$.
We can then conclude that the null space of $\mb{L}_\text{TS}$ is one-dimensional and spanned by right and left null vectors $\tilde{\mb{p}}_k$ and $\tilde{\mb{q}}_k$.

\subsection{Solvability Condition at Resonance}

Due to the singularity of the operator $\mb{L}_\text{TS}$ at resonance, the linear system $\mb{L}_\text{TS}\tilde{\boldsymbol{\eta}} = \mb{B}_\text{TS}\tilde{\mb{f}}$ only has a unique solution when the solvability condition presented in this section holds.
For a solution $\tilde{\boldsymbol{\eta}}$ to exist, $\tilde{\mb{f}} \in \mathrm{Range}(\mb{L}_\text{TS})$ must be true.
The fundamental theorem of linear algebra states $\mathrm{Range}(\mb{L}_\text{TS}) = \mathrm{Null}(\mb{L}_\text{TS}^*)^\perp$, where $\mathrm{Null}$ denotes the null space and $\Box^\perp$ denotes the orthogonal complement.
Since the null space of $\mb{L}_\text{TS}^*$ is one-dimensional and spanned by $\tilde{\mb{q}}_k$, the solvability condition can be given as
\begin{equation}
\langle \tilde{\mb{q}}_k, \mb{B}_\text{TS}\hat{\mb{f}} \rangle = \tilde{\mb{q}}_k^* \mb{B}_\text{TS}\hat{\mb{f}} = 0.
\label{eq:solvability}
\end{equation}
When this condition holds, solutions $\tilde{\boldsymbol{\eta}}$ exist but are not unique and can contain an arbitrary component in the direction of $\mathrm{Null}(\mb{L}_\text{TS}) = \tilde{\mb{p}}_k$.
The transverse resolvent operator presented in Section~\ref{sec:transverse} selects a unique transverse response by projecting into a subspace in which $\mb{L}_\text{TS}$ is bijective.

It can also be seen that the singularity of $\mb{L}_\text{TS}$ is not a numerical artifact but a physical phenomena corresponding to secular growth of the linearized dynamics.
% TODO:
It is later shown in Section~\ref{sec:physical_response} that a physical forcing which is aligned with the modulated adjoint direction at resonance leads to unbounded phase growth of the linear dynamics.
When this occurs, this violates the steady state assumption of the resolvent framework.
Therefore, the full resolvent gain
\begin{equation}
G(\omega_f) = \sigma_\text{max}(\mb{L}_\text{TS}^{-1}\mb{B}_\text{TS}) \to \infty \quad \text{as } \omega_f \to k\omega_0
\end{equation}
diverges at resonance, which is the physically correct prediction for this case.

\subsection{Transverse Resolvent Analysis}
\label{sec:transverse}

The full TSR operator (Eq.~\ref{eq:discrete_op}) maps from an arbitrary forcing $\tilde{\mb{f}}$ to the resulting response $\tilde{\boldsymbol{\eta}}$ without considering the excitation of the neutral Floquet mode which can lead to unbounded phase drift.
In this section, we follow the approach by~\citet{Padovan2022} to define a subspace $\Sigma_\text{TS}$ in which the TSR operator $\mb{L}_\text{TS}$ is both bijective and invariant.
When the TSR operator is restricted to this subspace, the projected forcing $\tilde{\mb{f}}_{\Sigma_\text{TS}}$ is required to be orthogonal to the modulated adjoint neutral mode.
We later show that this specific forcing excites a response $\tilde{\boldsymbol{\eta}}_\Sigma$ that contains a zero mean phase drift component.
This projection isolates the unique solution $\tilde{\boldsymbol{\eta}}_\Sigma$ which contains zero phase drift at every forcing frequency, including resonance.

In many cases, the shape deformation of the limit cycle (transverse response) is of greater physical interest than the phase drift.
For example, in aeroelastic flutter, transverse perturbations change the oscillation amplitude and structural loads, while phase perturbations merely shift the timing.
To analyze only the transverse response, we define the transverse resolvent gain%\emph{transverse resolvent gain}
\begin{equation}
G_\Sigma(\omega_f) = \max_{\hat{\mb{f}}_{\Sigma} \neq 0} \frac{\|\tilde{\boldsymbol{\eta}}_{\Sigma}\|_2}{\|\hat{\mb{f}}_{\Sigma}\|_2},
\end{equation}
which measures the maximum amplification from forcing to response in the subspace $\Sigma_\text{TS}$.

To define this subspace, we introduce the oblique projector
\begin{equation}
\mb{P}_\omega = \mb{I} - \tilde{\mb{p}}_\omega \tilde{\mb{q}}_\omega^* ,
\end{equation}
where $\tilde{\mb{p}}_\omega = \tilde{\mb{p}}_0 e^{-j\f{\omega_f}{\omega_0}\theta}$ corresponds to modulating the neutral mode with the phase $\omega_f t$.
Similarly for the adjoint mode, $\tilde{\mb{q}}_\omega = \tilde{\mb{q}}_0 e^{-j\f{\omega_f}{\omega_0}\theta}$.
We note that at resonance ($\omega_f = k\omega_0$) these modulated modes become $\tilde{\mb{p}}_k$ and $\tilde{\mb{q}}_k$ for the specific integer $k$.
We then formally define the subspace $\Sigma_\text{TS}$ as the range of the projector $\mb{P}_\omega$.
It can be shown that this subspace is orthogonal to the modulated adjoint $\tilde{\mb{q}}_\omega$
\begin{equation}
\tilde{\mb{q}}_\omega^*{\mb{P}}_\omega = \tilde{\mb{q}}_\omega^*(\mb{I} - \tilde{\mb{p}}_\omega \tilde{\mb{q}}_\omega^*) = \tilde{\mb{q}}_\omega^* - \tilde{\mb{q}}_\omega^* = 0,
\end{equation}
by recognizing that $\tilde{\mb{q}}_\omega^*\tilde{\mb{p}}_\omega = 1$.
It can also be shown that this subspace filters out any component in the direction of $\tilde{\mb{p}}_\omega$ since $\mb{P}_k\tilde{\mb{p}}_\omega = 0$.

Using this projector to enforce $\tilde{\mb{f}}_\Sigma = \mb{P}_\omega\tilde{\mb{f}}$ and $\tilde{\boldsymbol{\eta}}_\Sigma = \mb{P}_\omega\tilde{\boldsymbol{\eta}}$ defines the transverse resolvent operator
\begin{equation}
\mb{R}_\Sigma \coloneqq \mb{P}_\omega \mb{L}_\text{TS}^{-1} \mb{P}_\omega \mb{B}_\text{TS},
\end{equation}
which maps from $\tilde{\mb{f}}_\Sigma \rightarrow \tilde{\boldsymbol{\eta}}_\Sigma$.
We then have that $G_\Sigma(\omega_f) = \sigma_{\max}(\mb{R}_\Sigma)$ is the transverse resolvent gain.
The optimal forcing and response modes $\tilde{\mb{f}}_\Sigma$ and $\tilde{\boldsymbol{\eta}}_\Sigma$ can then be given as the right $\mb{v}_1$ and left $\mb{u}_1$ singular vectors of $\mb{R}_\Sigma$.
We note that since $\mb{L}_\text{TS}$ is non-invertible this equation cannot be directly solved.
Instead we efficiently solve the system and take the SVD using the process described in Appendix~\ref{app:efficient_resolvent}.

This formulation requires that a unique solution $\tilde{\boldsymbol{\eta}}_\Sigma \in \Sigma_\text{TS}$ exist for a forcing $\tilde{\mb{f}}_\Sigma \in \Sigma_\text{TS}$ (i.e the operator $\mb{L}_\text{TS}$ must be bijective in $\Sigma$).
A brief proof of this as well as the invariance of $\mb{L}_\text{TS}$ is given in Appendix~\ref{app:invariance_bijectivity}.
% TODO:

\subsection{Reconstruction of the Physical Response}
\label{sec:reconstruction}

The SVD of the transverse TSR operator $\mb{R}_{\Sigma}(\omega_f)$ yields the forcing $\tilde{\mb{f}}_{\Sigma}$ which does not excite any phase drift in the response $\tilde{\boldsymbol{\eta}}_{\Sigma}$.
However, when applying this forcing to the linearized dynamics to compare the predicted response to time-accurate integration as we do in Section~\ref{sec:num_ex}, the initial condition of the simulation can cause the neutral mode to show up in the response as a constant phase shift.
Specifically, the projected TSR operator predicts the particular solution of the forced linear equations and neglects the homogeneous component spanned by the neutral Floquet mode $\tilde{\mb{p}}_0(t)$.
In direct numerical simulations, the initial condition starting from the base flow $\tilde{\boldsymbol{\eta}}(0) = \mb{0}$ excites this homogeneous (neutral and non-decaying) mode.
% MH-HS: I changed this back to zero isntead of \eta_0 since if you do not start on the base flow, the neutral mode is not excited and this is not relevent.
Consequently, the full physical response observed in direct simulation is $\tilde{\boldsymbol{\eta}}(t) = \tilde{\boldsymbol{\eta}}_{\Sigma}(t) + c\tilde{\mb{p}}_0(t)$, where the drift coefficient $c \in \mathbb{R}$ represents a permanent phase shift.
To determine $c$, we enforce the initial condition by projecting onto the adjoint neutral mode direction
\begin{equation}
c = -\frac{\langle  \text{Re}(\tilde{\boldsymbol{\eta}}_{\Sigma}(0)), \tilde{\mb{q}}_0(0) \rangle}{\langle \tilde{\mb{p}}_0(0), \tilde{\mb{q}}_0(0) \rangle}.
\label{eq:drift_coeff}
\end{equation}
% MH-HS: same comment as above
This reconstruction bridges the gap between the frequency-domain TSR prediction and time-domain simulation.

Finally, the addition of the phase component to the response increases the resolvent gain.
In many practical cases, the amplification of the transverse dynamics significantly exceeds the magnitude of the permanent phase shift.
Consequently, the contribution of the phase shift to the total energy is often sub-dominant, resulting in only a minor difference between the projected $G_\Sigma(\omega_f)$ gain and the gain computed from time-accurate integration starting from a zero initial condition.
The reconstructed resolvent gain $G_\text{rec}(\omega_f)$ which includes both the transverse particular response and the homogeneous neutral mode $\tilde{\boldsymbol{\eta}}(t) = \tilde{\boldsymbol{\eta}}_{\Sigma}(t) + c\tilde{\mb{p}}_0(t)$ can be constructed as
\begin{equation}
\begin{split}
G_\text{rec}(\omega_f) &= \max_{\tilde{\mb{f}}_{\Sigma} \neq \mb{0}} \frac{\|\tilde{\boldsymbol{\eta}}_{\Sigma} (t) + c(\tilde{\mb{f}}_{\Sigma})\tilde{\mb{p}}_0(t)\|_2}{\|\tilde{\mb{f}}_{\Sigma}\|_2} \\
&= \max_{\tilde{\mb{f}}_{\Sigma} \neq \mb{0}} \sqrt{ \left( \frac{\|\tilde{\boldsymbol{\eta}}_{\Sigma}\|_2}{\|\tilde{\mb{f}}_{\Sigma}\|_2} \right)^2 + \left( \frac{|c(\tilde{\mb{f}}_{\Sigma})|\|\tilde{\mb{p}}_0\|_2}{\|\tilde{\mb{f}}_{\Sigma}\|_2} \right)^2 + \frac{2\text{Re}\langle \tilde{\boldsymbol{\eta}}_{\Sigma}, c(\tilde{\mb{f}}_{\Sigma})\tilde{\mb{p}}_0 \rangle}{\|\tilde{\mb{f}}_{\Sigma}\|_2^2} },
\end{split}
\label{eq:full_gain}
\end{equation}
where the second equality follows from the expansion of the norm $\|\mathbf{u}+\mathbf{v}\|_2$, which includes a cross-term $2\text{Re}\langle \tilde{\boldsymbol{\eta}}_{\Sigma}, c\tilde{\mb{p}}_0 \rangle$ because the transverse response and neutral modes are not orthogonal in non-normal systems.
If $\tilde{\mb{f}}_{\Sigma}$ is chosen as the optimal forcing for $\mb{R}_{\Sigma}(\omega_f)$, then $\|\tilde{\boldsymbol{\eta}}_{\Sigma}\|_2/\|\tilde{\mb{f}}_{\Sigma}\|_2 = G_\Sigma(\omega_f) = \sigma_\text{max}(\mb{R}_{\Sigma})$.

\subsection{Physical Requirement for Zero Mean Phase Drift}
\label{sec:physical_response}

It has been shown that with the transverse resolvent operator a unique transverse solution $\tilde{\boldsymbol{\eta}}_\Sigma$ exists for forcing orthogonal to the adjoint mode (defined by the subspace $\Sigma$).
However, until now the link to the physics of the linearized system has not been addressed.
In this section, we show that the forcing which satisfies $\langle \tilde{\mb{q}}_k, \tilde{\mb{f}}\rangle = 0$ excites only the shape deformation of the physical response and zero mean phase drift.

Given the linear system
\begin{equation}
\label{eq:linear_dynamics}
\dot{\boldsymbol{\eta}}(t) = \mb{J}(t){\boldsymbol{\eta}}(t) + \hat{\mb{f}}(\omega_0 t)e^{j\omega_f t},
\end{equation}
where $\hat{\mb{f}}(\omega_0 t)$ is a $2\pi$-periodic envelope which corresponds to the quasi-periodic forcing $\mb{f}(t) = \hat{\mb{f}}(\omega_0 t)e^{j\omega_f t}$.
The physical response can be decomposed into a phase drift component and a transverse component $\boldsymbol{\eta} = c(t)\tilde{\mb{p}}_0(t) + \mb{v}(t)$.
Here $\tilde{\mb{p}}_0(t) = \dot{\overbar{\mb{w}}}$ corresponds to the direction of phase shift in the linearized system.
This can be shown via first order Taylor series expansion
\begin{equation}
\overbar{\mb{w}}(t + \epsilon) \approx \overbar{\mb{w}}(t) + \epsilon\dot{\overbar{\mb{w}}}(t) = \overbar{\mb{w}}(t) + \epsilon\tilde{\mb{p}}_0(t),
\end{equation}
for a phase perturbation $\epsilon$.
The phase drift amplitude $c(t)$ is a scalar function that represents the instantaneous amplitude of this phase perturbation at any time $t$.

The vector $\mathbf{v}(t)$ represents the transverse component of the perturbation from the base flow, capturing the shape deformation of the limit cycle.
According to Floquet theory, $\mathbf{v}(t)$ is spanned entirely by the strictly stable right Floquet modes ($\lambda_i < 0$).
By the property of bi-orthogonality, the adjoint mode $\tilde{\mb{q}}_0(t)$ which is the left Floquet mode corresponding to $\lambda_0 = 0$ must be strictly orthogonal to all right Floquet modes associated with different eigenvalues ($\lambda_i \neq 0$).
Consequently, the adjoint mode is mathematically guaranteed to be orthogonal to the transverse dynamics at all times, such that $\tilde{\mb{{q}}}(t)^* \mathbf{v}(t) = 0$.

It is shown in Appendix~\ref{app:phase_drift_derivation} the time derivative of the phase perturbation magnitude $\dot{c}(t)$ can be isolated by substituting in $\boldsymbol{\eta} = c(t)\tilde{\mb{p}}_0(t) + \mb{v}(t)$ to Eq~\ref{eq:linear_dynamics}.
The result of this is
\begin{equation}
\dot{c}(t) = \tilde{\mb{q}}_0(t)^*\hat{\mb{f}}(\omega_0t)e^{j\omega_f t},
\end{equation}
which can be re-arranged to reveal the modulated adjoint mode
\begin{equation}
\dot{c}(t) = (\tilde{\mb{q}}_0(t)e^{-j\f{\omega_f}{\omega_0}\theta})^*\hat{\mb{f}}(\omega_0t) = \mb{q}_\omega(t)^*\hat{\mb{f}}(\omega_0t),
\end{equation}
since $\theta = \omega_0t$ and $(e^{j\omega_f t})^* = e^{-j\omega_f t}$.

We can determine the phase shift accumulated over one full period of the limit cycle $\Delta c$ with the integral
\begin{equation}
\Delta c = \int_0^{T_0} \dot{c}(t)dt = \int_0^{T_0} \tilde{\mb{q}}_\omega(t)^*\hat{\mb{f}}(t)dt.
\end{equation}
Changing to the frequency domain with the phase variable $\theta = \omega_0t$ gives
\begin{equation}
    \Delta c = \f{1}{\omega_0}\int_0^{2\pi} \tilde{\mb{q}}_\omega(\theta)^*\hat{\mb{f}}(\theta)d\theta \propto \langle\mb{q}_k,\hat{\mb{f}}\rangle,
\end{equation}
where the proportionality arises from $\int_0^{2\pi} \mb{q}_k(\theta)^*\hat{\mb{f}}(\theta)d\theta = \langle\mb{q}_k,\hat{\mb{f}}\rangle$.

Finally we can conclude that for a forcing $\hat{\mb{f}}_\Sigma \in \Sigma$ which satisfies $\langle\mb{q}_k,\hat{\mb{f}}_\Sigma\rangle = 0$, there is zero phase drift accumulation over any period of the orbit.
This links the TSR analysis where we require forcing orthogonal to the modulated adjoint mode to the physics of the system, proving that with our formulation no phase drift will accumulate in the linear dynamics.

\subsection{Summary of TSR Formulations}
\label{sec:formulation_summary}
% not sure if this needs its own section

The previous sections present three different TSR gains: 1. The full TSR gain $G(\omega_f)$ which is suitable for non-autonomous systems.
When this formulation is used in the context of autonomous systems it does not restrict the forcing to be orthogonal to the adjoint neutral mode, which can cause the response $\tilde{\boldsymbol{\eta}}(t)$ to exhibit phase drift.
At resonance, this phase drift is infinite which leads to singularities in the operator $\mb{L}_\text{TS}$, corresponding to secular growth of the response.
2. The transverse TSR gain $G_\Sigma(\omega_f)$ which calculates the amplification of a purely transverse response to a forcing orthogonal to the adjoint neutral mode.
3. The reconstructed TSR gain $G_\text{rec}(\omega_f)$ which considers the particular response from $G_\Sigma(\omega_f)$ as well as the homogeneous response spanned by the neutral mode.
This reconstructed gain can be compared directly to time-accurate integration of the optimally forced (with $\tilde{\mb{f}}_{\Sigma_\text{TS}}$) linear dynamics starting from a initial condition on the base flow.
All gain formulations are summarized in Table~\ref{tab:tsr_summary} with their key properties.

\begin{table}[h!]
\centering
\caption{TSR Gain Summary}
\label{tab:tsr_summary}
\ra{1.3}
\resizebox{\columnwidth}{!}{%
\begin{tabular}{@{}lcccc@{}}
\toprule
\textbf{Formulation} & \textbf{Gain Definition} & \textbf{Linear System} & \textbf{System Type} & \textbf{Singularity?} \\
\midrule
Full & $G(\omega_f) = \sigma_{\text{max}}(\mb{R}_{\text{TS}})$ & $\mb{L}_{\text{TS}}\tilde{\boldsymbol{\eta}} = \mb{B}_{\text{TS}}\tilde{\mb{f}}$ & Non-autonomous & Yes \\
Transverse & $G_\Sigma(\omega_f) = \sigma_{\text{max}}(\mb{R}_{\Sigma_\text{TS}})$ & $\mb{L}_{\text{TS}}\mb{P}_{\text{TS}}\tilde{\boldsymbol{\eta}} = \mb{P}_{\text{TS}}\mb{B}_{\text{TS}}\tilde{\mb{f}}$ & Autonomous & No \\
Reconstructed & $G_\text{rec}(\omega_f)$ (Eq.~\ref{eq:full_gain}) & $\mb{L}_{\text{TS}}\mb{P}_{\text{TS}}\tilde{\boldsymbol{\eta}} = \mb{P}_{\text{TS}}\mb{B}_{\text{TS}}\tilde{\mb{f}}$ & Autonomous & No \\
\bottomrule
\end{tabular}
}
\end{table}

\section{Convergence Analysis}
\label{sec:convergence}

In this section, we establish spectral convergence of both the harmonic resolvent (HR) and the time-spectral resolvent (TSR) operators under standard analyticity and stability assumptions. 
Throughout this section we consider the regular case (non-autonomous base flow or $\omega_f \neq 0$), so that all operators are invertible.
Let $\sigma^{(n_{\text{har}})}$ denote the maximum singular value of the truncated harmonic resolvent $\mb{R}_{\text{HR}}^{(n_{\text{har}})}$. Define the limiting gain
\begin{equation}
\sigma_\infty \coloneqq \lim_{n_{\text{har}}\to\infty} \sigma^{(n_{\text{har}})}.
\label{eq:sigma_limit_def}
\end{equation}
Existence of this limit and the exponential convergence rate are proved in Proposition~\ref{prop:cont_conv}.
% The autonomous case ($\omega_f=0$) follows by restriction to the projected subspace as described by Padovan and Rowley~\cite{Padovan2022}.

\subsection{Assumptions}
\label{sec:conv_assumptions}

Let $\mb{J}(t)$ be $T_0$-periodic and analytic in $t$, and let the underlying linear time-periodic system be exponentially stable. We additionally assume finite-section stability of the truncated harmonic resolvent operators,
\begin{equation}
\sup_{n_{\text{har}}} \left\|\left(\mb{L}_{\text{HR}}^{(n_{\text{har}})}\right)^{-1}\right\|_2 < \infty,
\label{eq:uniform_resolvent_bound}
\end{equation}
which holds when the Floquet exponents are uniformly separated from the imaginary axis and the Fourier coefficients of $\mb{J}$ decay exponentially (a standard consequence of finite-section results for exponentially decaying block-Toeplitz operators). Under these conditions, $\mb{R}_\infty$ is bounded and the gains of the truncated systems are uniformly controlled.
Throughout, $\|\cdot\|_2$ denotes the induced 2-norm; when the subscript is omitted, this norm is intended.
These hypotheses correspond to the non-autonomous setting emphasized earlier (no neutral Floquet mode); in the autonomous case with $\omega_f = 0$, the projected resolvent of~\citet{Padovan2022} would replace the regular inverses used here.

\begin{lemma}[Fourier Coefficient Decay]\label{lem:fourier_decay}
If $g(t)$ is analytic in a strip $|\operatorname{Im} t| < \rho$, then its Fourier coefficients satisfy $\|\hat{g}_k\| \leq C e^{-\alpha|k|}$ for $\alpha < \rho$~\cite{Trefethen2000, boyd2013chebyshev}.
\end{lemma}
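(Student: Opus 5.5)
The plan is the classical contour-shift (Paley--Wiener) argument. I work with the $2\pi$-periodic phase variable $\theta=\omega_0 t$, so that $\hat g_k=\frac{1}{2\pi}\int_0^{2\pi} g(\theta)e^{-jk\theta}\,\d\theta$. Here $\rho$ is the analyticity half-width in $\theta$; for the physical time $t$ this width rescales by $\omega_0$, which only changes constants. I will read the hypothesis as saying that $g$ is analytic in the open strip $S_\rho=\{|\operatorname{Im}\theta|<\rho\}$ and remains $2\pi$-periodic there, which holds by the identity theorem since $g(\theta+2\pi)-g(\theta)$ vanishes on the real axis. The norm $\|\cdot\|$ may be any norm on the finite-dimensional coefficient space, e.g.\ the induced matrix norm when $g=\mb{J}$, so it suffices to argue entrywise or directly with vector-valued integrals.

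First, fix $\alpha$ with $0<\alpha<\rho$ and consider the rectangle with vertices $0$, $2\pi$, $2\pi - j\alpha\,\mathrm{sgn}(k)$ and $-j\alpha\,\mathrm{sgn}(k)$. The sign is chosen so that $|e^{-jk\theta}|$ decays along the shifted edge: for $k>0$ shift down to $\operatorname{Im}\theta=-\alpha$, and for $k<0$ shift up. The case $k=0$ is trivial with $C\ge\|\hat g_0\|$. The integrand $g(\theta)e^{-jk\theta}$ is analytic on a neighbourhood of the closed rectangle, since $\alpha<\rho$, so Cauchy's theorem gives a vanishing contour integral. The two vertical edges cancel by $2\pi$-periodicity of both $g$ and $e^{-jk\theta}$ (because $k\in\mathbb{Z}$), and they are traversed in opposite directions. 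Hence
\[
\hat g_k=\frac{1}{2\pi}\int_0^{2\pi} g(x\mp j\alpha)\,e^{-jk(x\mp j\alpha)}\,\d x .
\]

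Second, bound this integral. We have $|e^{-jk(x\mp j\alpha)}|=e^{-|k|\alpha}$. Let $M_\alpha=\sup_{|\operatorname{Im}\theta|\le\alpha}\|g(\theta)\|$. It is finite because $g$ is continuous on the compact set $[0,2\pi]\times[-\alpha,\alpha]$ and periodic in the real direction. This gives $\|\hat g_k\|\le M_\alpha e^{-\alpha|k|}$, i.e.\ the claim with $C=M_\alpha$.

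The main subtlety is that $C$ depends on $\alpha$ and may blow up as $\alpha\to\rho$, because $g$ need not be bounded on the open strip. That is why the statement only claims $\alpha<\rho$ strictly. Fixing $\alpha$ first and using compactness of the closed sub-strip resolves this. Applied to $\mb{J}$, the lemma yields the exponential decay of $\hat{\mb{J}}_\ell$ that the finite-section and convergence arguments below rely on.
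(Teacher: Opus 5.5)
Your contour-shift argument is correct. It is the standard proof behind the references the paper cites; the paper gives no proof of its own and defers to Trefethen and Boyd. In your argument, periodicity cancels the vertical edges, the shifted edge contributes $e^{-\alpha|k|}$, and compactness of $|\operatorname{Im}\theta|\le\alpha<\rho$ makes $C=M_\alpha$ finite.

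One correction to your side remark: passing back to physical time does not ``only change constants''. A strip of half-width $\rho$ in $t$ is a strip of half-width $\omega_0\rho$ in $\theta$, so for a $T_0$-periodic $g(t)$ the admissible rate is $\alpha<\omega_0\rho$. The Poisson kernel $\sum_k r^{|k|}e^{jk\omega_0 t}$ shows the bound with $\alpha<\rho$ fails when $\omega_0<1$. The lemma as stated therefore holds precisely in the $2\pi$-normalized phase variable you actually work in.
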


When the base flow is computed via harmonic balance with $n$ harmonics, the Jacobian $\mb{J}^{(n)}(t)$ is evaluated at the truncated base flow $\bar{\mb{q}}^{(n)}(t)$, yielding Fourier coefficients $\hat{\mb{J}}_k^{(n)}$. We assume:
\begin{enumerate}
\item[(J1)] \textit{Coefficient decay:} The exact Jacobian coefficients satisfy $\|\hat{\mb{J}}_k^{(\infty)}\| \leq C e^{-\alpha|k|}$.
\item[(J2)] \textit{Coefficient convergence:} For $|k| \leq n$, $\|\hat{\mb{J}}_k^{(n)} - \hat{\mb{J}}_k^{(\infty)}\| \leq C e^{-\alpha n}$.
\item[(J3)] \textit{Uniform invertibility:} $\|(\mb{L}_{\text{HR}}^{(n)})^{-1}\| \leq M$ for all $n$ and some $M > 0$.
\end{enumerate}
Conditions (J1)--(J2) hold when the base flow and right-hand side are analytic: (J1) follows from Lemma~\ref{lem:fourier_decay}, and (J2) follows because the base flow error $\|\bar{\mb{q}}^{(n)} - \bar{\mb{q}}^{(\infty)}\|$ decays exponentially for analytic problems, and the Jacobian depends smoothly on the base flow. Condition (J3) holds when the forcing frequency $\omega_f$ is bounded away from the Floquet exponents of the linearized system; this is the regime where resolvent analysis is meaningful.

\begin{proposition}[Spectral Convergence of HR]\label{prop:cont_conv}
Let $\sigma^{(n_{\text{har}})}$ be the maximal singular value of the harmonic resolvent with $n_{\text{har}}$ harmonics, and $\sigma_\infty$ as in Eq.~\ref{eq:sigma_limit_def}. Then
\begin{equation}
|\sigma^{(n_{\text{har}})} - \sigma_\infty| \leq C e^{-\alpha n_{\text{har}}}.
\label{eq:HR_convergence}
\end{equation}
\end{proposition}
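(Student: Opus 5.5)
The plan is to compare each truncated operator with the bi-infinite harmonic resolvent $\mb{R}_\infty = \mb{L}_\infty^{-1}\mb{B}_\infty$ acting on $\ell^2(\mathbb{Z};\mathbb{C}^n)$. We then obtain the scalar estimate from the operator estimate, using the fact that singular values are $1$-Lipschitz in the operator norm (Weyl's inequality). Concretely, let $\mb{E}_n$ be the embedding of the truncated coefficient space (indices $|k|\le n$) into $\ell^2$, and let $\mb{P}_n=\mb{E}_n\mb{E}_n^*$ be the orthogonal projection onto those indices. It suffices to show
\begin{equation*}
\big\|\mb{E}_n\mb{R}_{\text{HR}}^{(n)}\mb{E}_n^* - \mb{P}_n\mb{R}_\infty\mb{P}_n\big\| \le C e^{-\alpha n}
\quad\text{and}\quad
\big|\sigma_{\max}(\mb{P}_n\mb{R}_\infty\mb{P}_n) - \|\mb{R}_\infty\|\big| \le C e^{-\alpha n}.
\end{equation*}
These two bounds together also establish the existence of the limit in Eq.~\ref{eq:sigma_limit_def}, with $\sigma_\infty=\|\mb{R}_\infty\|$.

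For the first estimate, we split the error $\mb{L}_{\text{HR}}^{(n)} - \mb{P}_n\mb{L}_\infty\mb{P}_n$ into two pieces. The first is the base-flow perturbation $\mb{L}^{(n)}[\hat{\mb{J}}^{(n)}]-\mb{L}^{(n)}[\hat{\mb{J}}^{(\infty)}]$. Its blocks are $\hat{\mb{J}}^{(n)}_{k-l}-\hat{\mb{J}}^{(\infty)}_{k-l}$, each of size at most $Ce^{-\alpha n}$ by (J2). Summing these blocks naively costs a factor of $n$. To avoid this, we strengthen (J2) to an $\ell^1$-in-$k$ version, or equivalently absorb the factor $n$ by lowering $\alpha$ slightly. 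The Schur test (block Toeplitz norm $\le\sum_\ell\|\hat{\mb{J}}_\ell\|$) then bounds this piece by $Ce^{-\alpha' n}$. Combining this with (J3) and the identity $A^{-1}-B^{-1}=A^{-1}(B-A)B^{-1}$ reduces the problem to the exact-Jacobian finite section. The second piece is the finite-section error, namely the difference between the inverse of the finite section and the finite section of the inverse. We control it with a Schur-complement or block-Gauss argument. Write $\mb{L}_\infty$ in $2\times2$ block form with respect to the indices $|k|\le n$ and $|k|>n$. The off-diagonal coupling blocks involve $\hat{\mb{J}}_\ell$ only through entries linking $|k|\le n$ to $|k|>n$. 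By (J1), the operator norm of this coupling is dominated by the low-index modes near the boundary and is not itself small. So the relevant quantity is the coupling between the \emph{interior} modes and the tail.

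This interior–tail interaction is the main obstacle. A plain Schur complement gives an $O(1)$ error, because modes near $|k|=n$ couple strongly to modes just outside the truncation. My first approach is to exploit exponential off-diagonal decay of $\mb{L}_\infty^{-1}$. Since the symbol is analytic and (J3) gives uniform invertibility, a Combes–Thomas or Demko-type argument shows $\|(\mb{L}_\infty^{-1})_{kl}\|\le Ce^{-\beta|k-l|}$. In addition, the diagonal blocks $\mb{A}_k = j(\omega_f+k\omega_0)\mb{I}-\hat{\mb{J}}_0$ grow linearly in $|k|$, so $\|(\mb{L}_\infty^{-1})_{kk}\|=O(1/|k|)$ for large $|k|$. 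Together these show that the entries of $\mb{R}_\infty$ carrying indices near or beyond $n$ are small. The remaining piece is the mismatch at the truncation boundary. The maximizing singular vectors of $\mb{R}_\infty$ have coefficients that decay exponentially, being response envelopes of an analytic LTP system (via Lemma~\ref{lem:fourier_decay}). I would therefore compare Rayleigh quotients: evaluating $\mb{R}^{(n)}$ on $\mb{P}_n\mb{v}_1$ gives $\sigma^{(n)}\ge\sigma_\infty - Ce^{-\alpha n}$. For the reverse inequality, I would use (J3) together with the fact that the truncated problem equals the infinite problem with a boundary perturbation supported at $|k|\approx n$. On that region the singular vectors of the truncated operator are uniformly exponentially small, which can be shown by a discrete maximum or Agmon-type estimate using the large diagonal. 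This gives $\sigma^{(n)}\le\sigma_\infty+Ce^{-\alpha n}$.

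Finally, we collect the constants. The two one-sided bounds, combined with the base-flow perturbation estimate, yield Eq.~\ref{eq:HR_convergence} after possibly renaming $\alpha$ to any rate strictly smaller than the analyticity width, which is consistent with Lemma~\ref{lem:fourier_decay}.
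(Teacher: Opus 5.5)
Your route is genuinely different from the paper's, and the difference matters.

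\textbf{Comparison with the paper's proof.} The paper zero-pads $\mb{L}^{(n)}_{\text{HR}}$ into the $m$-harmonic space. It then bounds $\|\mb{L}^{(m)}_{\text{HR}}-\widetilde{\mb{L}}^{(n\to m)}_{\text{HR}}\|_2$ in \eqref{eq:L_diff_bound} by the tail $\sum_{|k|>n}\|\hat{\mb{J}}_k\|$ plus the (J2) core. Finally it obtains a Cauchy sequence from the resolvent identity, (J3) and Weyl's inequality, never passing through $\mb{R}_\infty$ or its singular vectors.

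Your observation that modes near $|k|=n$ couple strongly to modes just outside the truncation is exactly why that bound cannot hold. The padding discards blocks $\hat{\mb{J}}_{k-\ell}$ with $|k-\ell|=1$ across the truncation edge; they appear explicitly in the paper's own $n=1$, $m=2$ example. So the difference has norm at least $\max(\|\hat{\mb{J}}_{1}\|,\|\hat{\mb{J}}_{-1}\|)$, independent of $n$.

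By testing only on optimal singular vectors with exponentially decaying coefficients, you exploit the fact that the edge residual $(\mb{I}-\mb{P}_n)\mb{L}_\infty\mb{P}_n\boldsymbol{\eta}$ is exponentially small on such vectors, even though the operator producing it is $O(1)$. The paper's argument buys brevity. It can also be repaired in the setting its proof actually uses, $\mb{B}_{\text{HR}}$ supported only at $k=0$: bound the product $(\mb{L}^{(m)}_{\text{HR}}-\widetilde{\mb{L}}^{(n\to m)}_{\text{HR}})(\mb{L}^{(m)}_{\text{HR}})^{-1}\mb{B}^{(m)}_{\text{HR}}$ through off-diagonal decay of the inverse, rather than the difference alone. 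Your argument buys a proof that works for the block-diagonal $\mb{B}_{\text{HR}}$ of Section~\ref{sec:hr_operator}, and it identifies $\sigma_\infty=\|\mb{R}_\infty\|$.

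\textbf{Points in your write-up that need repair.}

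\emph{The opening reduction.} Drop it. The estimate $\|\mb{E}_n\mb{R}^{(n)}_{\text{HR}}\mb{E}_n^*-\mb{P}_n\mb{R}_\infty\mb{P}_n\|\le Ce^{-\alpha n}$ is false in general. For forcing concentrated at $|k|=n$, the finite-section inverse and the section of the inverse differ by an amount that is only algebraically small in $n$. Your final argument does not need this estimate.

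\emph{Decay of the infinite optimal pair.} Exponential decay of the optimal pair of $\mb{R}_\infty$ does not follow from its being the response of an analytic LTP system, since an $L^2$ forcing only yields an $H^1$ response. Derive it instead from the optimality system $\mb{L}_\infty\boldsymbol{\eta}=\mb{B}\mb{v}$, $\mb{L}_\infty^*\boldsymbol{\zeta}=\boldsymbol{\eta}$, $\sigma_\infty^2\mb{v}=\mb{B}^*\boldsymbol{\zeta}$, which is a periodic linear ODE with analytic coefficients. Compactness of $\mb{R}_\infty$, which follows from the growing diagonal, guarantees that the maximizer exists.

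\emph{The Agmon step for the truncated vectors.} Make it explicit, in this order:
\begin{itemize}
\item Take $\mb{W}_\beta=\mathrm{diag}(e^{\beta|k|})\otimes\mb{I}$. Then $\|\mb{W}_\beta\mb{L}^{(n)}_{\text{HR}}\mb{W}_\beta^{-1}-\mb{L}^{(n)}_{\text{HR}}\|\le\sum_\ell(e^{\beta|\ell|}-1)\|\hat{\mb{J}}_\ell\|$, so (J3) and a Neumann series bound the weighted truncated resolvents uniformly in $n$ for small $\beta$.
\item Let $\mb{K}^{(n)}=(\mb{R}^{(n)}_{\text{HR}})^*\mb{R}^{(n)}_{\text{HR}}$. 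The growing diagonal gives $\|(\mb{I}-\mb{P}_N)\mb{W}_\beta\mb{K}^{(n)}\mb{W}_\beta^{-1}\|=O(1/N)$.
\item The lower bound $\sigma^{(n)}\ge\sigma_\infty/2$, which must therefore be proved first, lets you absorb that tail in $(\sigma^{(n)})^2\mb{v}^{(n)}=\mb{K}^{(n)}\mb{v}^{(n)}$. This gives $\|\mb{W}_\beta\mb{v}^{(n)}\|\le C$ uniformly in $n$.
\end{itemize}

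\emph{The rate.} This $\beta$ is fixed by $M$ and $\sum_\ell\|\hat{\mb{J}}_\ell\|$. The rate you actually obtain is therefore essentially $\min(\alpha,\beta)$, not any rate below the strip width as your last sentence claims.

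\emph{The base-flow step.} There the index $k-\ell$ runs up to $2n$, beyond the range covered by (J2). You therefore also need decay of $\hat{\mb{J}}^{(n)}_d$ for $n<|d|\le 2n$; the paper's proof omits this as well.
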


\begin{proof}
The strategy is to show that $\{\sigma^{(n)}\}$ is a Cauchy sequence with exponential rate by embedding coarse truncations into finer harmonic spaces.
For $m>n$, embed $\mb{L}_{\text{HR}}^{(n)}$ into the $m$-harmonic space by zero-padding off-diagonal blocks (Appendix~\ref{app:padding}); call the result $\widetilde{\mb{L}}_{\text{HR}}^{(n\to m)}$.
The added diagonal blocks are $\mb{A}_k^{(n)} = j(\omega_f + k\omega_0)\mb{I} - \hat{\mb{J}}_0^{(n)}$ for $|k|>n$, using the Jacobian from the $n$-truncated base flow.

Similarly pad $\mb{B}_{\text{HR}}^{(n)}$ to $\widetilde{\mb{B}}_{\text{HR}}^{(n\to m)}$; since $\mb{B}$ is nonzero only in the $k=0$ block, $\widetilde{\mb{B}}_{\text{HR}}^{(n\to m)} = \mb{B}_{\text{HR}}^{(m)}$ (Appendix~\ref{app:padding}).
Since the added blocks are decoupled and $\widetilde{\mb{B}}_{\text{HR}}^{(n\to m)}$ is zero there, the lifted resolvent $\widetilde{\mb{R}}_{\text{HR}}^{(n\to m)} = (\widetilde{\mb{L}}_{\text{HR}}^{(n\to m)})^{-1}\widetilde{\mb{B}}_{\text{HR}}^{(n\to m)}$ shares the same nonzero singular values as $\mb{R}_{\text{HR}}^{(n)}$. The full truncation is $\mb{R}_{\text{HR}}^{(m)} = (\mb{L}_{\text{HR}}^{(m)})^{-1}\mb{B}_{\text{HR}}^{(m)}$, built from the $m$-truncated base flow. The difference $\mb{L}_{\text{HR}}^{(m)} - \widetilde{\mb{L}}_{\text{HR}}^{(n\to m)}$ has two contributions (Appendix~\ref{app:padding}): (i) neglected couplings to/from outer harmonics $|k|>n$, and (ii) Jacobian coefficient differences $\hat{\mb{J}}_k^{(m)} - \hat{\mb{J}}_k^{(n)}$. By the Wiener norm bound for block-Toeplitz matrices ($\|\mb{L}\|_2 \leq \sum_k \|\hat{\mb{L}}_k\|$, see~\citet{bottcher2012spectral}) and assumptions (J1)--(J2),
\begin{equation}
\|\mb{L}_{\text{HR}}^{(m)} - \widetilde{\mb{L}}_{\text{HR}}^{(n\to m)}\|_2 \leq \underbrace{\sum_{|k|>n} \|\hat{\mb{J}}_k^{(m)}\|}_{\text{tail: (J1)}} + \underbrace{\sum_{|k|\leq n} \|\hat{\mb{J}}_k^{(m)} - \hat{\mb{J}}_k^{(n)}\|}_{\text{core: (J2)}} \leq C_1 e^{-\alpha n}.
\label{eq:L_diff_bound}
\end{equation}
Using the identity $\mb{A}^{-1}-\mb{B}^{-1}=\mb{A}^{-1}(\mb{B}-\mb{A})\mb{B}^{-1}$,
\begin{equation}
\widetilde{\mb{R}}_{\text{HR}}^{(n\to m)} - \mb{R}_{\text{HR}}^{(m)} = (\widetilde{\mb{L}}_{\text{HR}}^{(n\to m)})^{-1} (\mb{L}_{\text{HR}}^{(m)} - \widetilde{\mb{L}}_{\text{HR}}^{(n\to m)}) (\mb{L}_{\text{HR}}^{(m)})^{-1} \mb{B}_{\text{HR}}^{(m)},
\end{equation}
and by (J3), $\|\widetilde{\mb{R}}_{\text{HR}}^{(n\to m)} - \mb{R}_{\text{HR}}^{(m)}\|_2 \leq C_2 e^{-\alpha n}$ where $C_2 = C_1 M^2 \|\mb{B}_{\text{HR}}\|$.
Since $\sigma^{(n)} = \sigma_1(\widetilde{\mb{R}}_{\text{HR}}^{(n\to m)})$ (Appendix~\ref{app:padding}), Weyl's inequality gives $|\sigma^{(m)} - \sigma^{(n)}| \leq C_2 e^{-\alpha n}$, so $\{\sigma^{(m)}\}$ is Cauchy and converges to $\sigma_\infty$; this proves Eq.~\ref{eq:HR_convergence}.
\end{proof}

\begin{theorem}[Spectral Convergence of TSR]\label{thm:tsr_conv}
Let $\sigma_{\text{TS}}^{(n_{\text{TS}})}$ be the maximal singular value of the TSR operator with odd $n_{\text{TS}}$ points. Then
\begin{equation}
|\sigma_{\text{TS}}^{(n_{\text{TS}})} - \sigma_\infty| \leq C' e^{-\alpha' n_{\text{TS}}}, \qquad \alpha' > 0.
\label{eq:TSR_convergence}
\end{equation}
\end{theorem}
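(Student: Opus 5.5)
The plan is to reduce the TSR gain to an HR-type gain by a unitary change of basis, and then invoke Proposition~\ref{prop:cont_conv}. Let $\mb{F}$ be the unitary DFT matrix of size $n_{\text{TS}}$, with modes $k=-n_{\text{har}},\dots,n_{\text{har}}$, and set $\mathcal{F}=\mb{F}\otimes\mb{I}_n$.

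\textbf{Step 1: conjugate by the DFT.} The differentiation matrix satisfies $\mb{F}\mb{D}\mb{F}^* = \mathrm{diag}(jk)$. Hence the circulant part $\omega_0(\mb{D}\otimes\mb{I}_n)$ becomes the block-diagonal matrix $\mathrm{diag}(jk\omega_0\mb{I})$, which matches the diagonal shifts in the blocks $\mb{A}_k$ of Eq.~\ref{eq:L_operator_matrix}.

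\textbf{Step 2: the pointwise Jacobian.} The block $\mathcal{F}\mb{J}_{\text{block}}\mathcal{F}^*$ is a block \emph{circulant} matrix. Its $(k,\ell)$ block is the discrete Fourier coefficient $\hat{\mb{J}}^{d}_{k-\ell \bmod n_{\text{TS}}}$ of the samples $\mb{J}_i$. These coefficients are aliased versions of the true coefficients:
\[
\hat{\mb{J}}^d_\ell = \sum_{p\in\mathbb{Z}} \hat{\mb{J}}_{\ell+p n_{\text{TS}}}.
\]
Both $\mathcal{F}$ and the unitary scaling of $\mb{B}_{\text{TS}}$ preserve singular values. Therefore
\[
\sigma_{\text{TS}}^{(n_{\text{TS}})} = \sigma_{\max}\bigl(\widehat{\mb{L}}^{-1}\mb{B}_{\text{HR}}^{(n_{\text{har}})}\bigr),
\]
where $\widehat{\mb{L}} := \mathcal{F}\mb{L}_{\text{TS}}\mathcal{F}^*$.

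\textbf{Step 3: compare with the truncated HR operator.} The difference $E := \widehat{\mb{L}} - \mb{L}_{\text{HR}}^{(n_{\text{har}})}$ has two sources.
\begin{itemize}
\item Aliasing within the retained coefficients, $\hat{\mb{J}}^d_\ell - \hat{\mb{J}}_\ell$ for $|\ell|\le 2n_{\text{har}}$.
\item Wrap-around: for $|k-\ell|>n_{\text{har}}$ the circulant places $\hat{\mb{J}}^d_{k-\ell\mp n_{\text{TS}}}$ where the Toeplitz matrix has $\hat{\mb{J}}_{k-\ell}$. Both entries are small, since $|k-\ell\mp n_{\text{TS}}|>n_{\text{har}}-1$ and $|k-\ell|>n_{\text{har}}$.
\end{itemize}
To bound $\|E\|_2$, I would use the Wiener-type estimate $\|\cdot\|_2\le\sum_\ell\|\text{diagonal}_\ell\|$, together with (J1) and Lemma~\ref{lem:fourier_decay}. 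This gives
\[
\|E\|_2 \le C\sum_{|\ell|\gtrsim n_{\text{har}}} e^{-\alpha|\ell|} \le C_3 e^{-\alpha n_{\text{har}}}.
\]
If the base flow itself is time-spectral, (J2) adds an error of the same order.

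\textbf{Step 4: invertibility and the gain estimate.} Invertibility of $\widehat{\mb{L}}$ follows by a Neumann series from (J3). Once $M C_3 e^{-\alpha n_{\text{har}}}<1/2$, we get $\|\widehat{\mb{L}}^{-1}\|\le 2M$. The resolvent identity and Weyl's inequality then give
\[
|\sigma_{\text{TS}}^{(n_{\text{TS}})} - \sigma^{(n_{\text{har}})}| \le 2M^2\|\mb{B}\|C_3 e^{-\alpha n_{\text{har}}}.
\]
Combining this with Eq.~\ref{eq:HR_convergence} and $n_{\text{har}}=(n_{\text{TS}}-1)/2$ yields Eq.~\ref{eq:TSR_convergence} with $\alpha'=\alpha/2$. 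The finitely many small $n_{\text{TS}}$, where $\widehat{\mb{L}}$ may not yet be controlled, are absorbed into $C'$; for those values $\mb{L}_{\text{TS}}$ is assumed invertible, which holds in the regular case considered throughout this section.

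\textbf{Expected obstacle.} The main difficulty is Step~3. The corner blocks of the circulant break the Toeplitz structure, and the entries there are not obviously small near the diagonal wrap. The argument must use that the indices $k-\ell$ and $k-\ell\pm n_{\text{TS}}$ cannot both be small. That is, each entry of $E$ is controlled by $e^{-\alpha\max(|k-\ell|,\,n_{\text{TS}}-|k-\ell|)}\le e^{-\alpha n_{\text{har}}}$. Summing over the $O(n_{\text{TS}})$ diagonals then needs a geometric tail rather than a crude count, so that no polynomial factor spoils the rate; otherwise I would accept a slightly smaller $\alpha'$.
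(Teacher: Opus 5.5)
\textbf{Verdict: there is a genuine gap in Step~3.}

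Your Step~2 is correct, and it is more careful than the paper's proof. The paper takes the DFT similarity $\mb{R}_{\text{TS}}=\mathcal{F}^{-1}\mb{R}_{\text{HR}}^{(n_{\text{har}})}\mathcal{F}$ of Appendix~\ref{app:equivalence} as exact. It therefore concludes $\sigma_{\text{TS}}^{(n_{\text{TS}})}=\sigma^{(n_{\text{har}})}$, and the theorem follows in one line from Proposition~\ref{prop:cont_conv}. You correctly observe that $\mathcal{F}\mb{J}_{\text{block}}\mathcal{F}^*$ is block \emph{circulant} in the aliased coefficients, so a bridging estimate to the truncated Toeplitz operator is genuinely needed. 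The problem is that your bridging estimate is false.

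\textbf{Where Step~3 fails.} The claimed bound $\|E\|_2\le C_3e^{-\alpha n_{\text{har}}}$ does not hold.
\begin{itemize}
\item For $n_{\text{har}}<|k-\ell|\le 2n_{\text{har}}$, the wrapped index has modulus $n_{\text{TS}}-|k-\ell|\in[1,n_{\text{har}}]$. It is not $>n_{\text{har}}-1$ in general.
\item At the corner $(k,\ell)=(n_{\text{har}},-n_{\text{har}})$ we have $k-\ell=2n_{\text{har}}\equiv -1 \pmod{n_{\text{TS}}}$. The block of $E$ there is $-(\hat{\mb{J}}^d_{-1}-\hat{\mb{J}}_{2n_{\text{har}}})\approx-\hat{\mb{J}}_{-1}$, which does not decay at all.
\item Each entry of $E$ is a \emph{difference} of two coefficients, so it is small only if \emph{both} indices are large. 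The correct entrywise bound is $e^{-\alpha\min(|k-\ell|,\,n_{\text{TS}}-|k-\ell|)}$, not the max in your last paragraph. ``The two indices cannot both be small'' is the wrong criterion.
\end{itemize}
The Mathieu example makes this explicit. There $\mb{J}(t)$ has only $\hat{\mb{J}}_0$ and $\hat{\mb{J}}_{\pm1}$. For every $n_{\text{TS}}\ge 3$, $E$ consists exactly of the blocks $-\hat{\mb{J}}_{\mp1}$ at positions $(\pm n_{\text{har}},\mp n_{\text{har}})$. Hence $\|E\|_2=\|\hat{\mb{J}}_1\|_2=0.1$ for all $n_{\text{TS}}$. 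The Neumann-series invertibility of $\widehat{\mb{L}}$ and the resolvent-identity/Weyl bound in Step~4 therefore have no basis.

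\textbf{Why an operator-norm argument cannot work.} This is not just a matter of constants. Take $\mb{B}_{\text{TS}}=\mathrm{blkdiag}(\mb{B},\dots,\mb{B})$. A forcing at harmonic $-n_{\text{har}}$ produces a response of size $O(1/(n_{\text{har}}\omega_0))$ there, and the corner block transfers it to harmonic $+n_{\text{har}}$. So $\|\widehat{\mb{L}}^{-1}\mb{B}_{\text{HR}}-(\mb{L}_{\text{HR}}^{(n_{\text{har}})})^{-1}\mb{B}_{\text{HR}}\|_2$ generically decays only algebraically. Bounding this difference in operator norm and then applying Weyl's inequality can never give an exponential rate.

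\textbf{What is missing.} You need a localization argument. The non-decaying blocks of $E$ couple only harmonics with $|k|,|\ell|\gtrsim n_{\text{har}}/2$, where the diagonal $jk\omega_0$ dominates. Concretely, you need two ingredients.
\begin{itemize}
\item[(i)] A uniform bound on $\|\widehat{\mb{L}}^{-1}\|_2$ that does not rely on $\|E\|_2$ being small. One option is a Neumann series in $(\mb{L}_{\text{HR}}^{(n_{\text{har}})})^{-1}E$. This is $O(1/n_{\text{har}})$, because $(\mb{L}_{\text{HR}}^{(n_{\text{har}})})^{-1}$ applied to vectors supported on those high harmonics is $O(1/n_{\text{har}})$.
\item[(ii)] Exponential decay in $|k|$ of the vectors on which $E$ actually acts. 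You then estimate $\|E\,(\mb{L}_{\text{HR}}^{(n_{\text{har}})})^{-1}\mb{B}_{\text{HR}}\|_2$ rather than $\|E\|_2$.
\end{itemize}
For harmonic forcing, where $\mb{B}$ acts only in the $k=0$ block, (ii) follows from an off-diagonal decay estimate for $(\mb{L}_{\text{HR}}^{(n_{\text{har}})})^{-1}$ that is uniform in $n_{\text{har}}$; you would have to prove this estimate. Your resolvent identity then goes through. For quasi-periodic forcing, compare the top singular values instead by testing each operator on the other's optimal forcing. This requires first proving that the corresponding optimal responses are exponentially localized in low harmonics.
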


\begin{proof}
With odd $n_{\text{TS}}$, TSR represents $n_{\text{har}} = (n_{\text{TS}}-1)/2$ harmonics.
The DFT relates the resolvents by $\mb{R}_{\text{TS}}^{(n_{\text{TS}})} = \mathcal{F}^{-1}\mb{R}_{\text{HR}}^{(n_{\text{har}})}\mathcal{F}$ (see Appendix~\ref{app:equivalence}), so $\sigma_{\text{TS}}^{(n_{\text{TS}})} = \sigma^{(n_{\text{har}})}$ since the DFT is unitary.
The result follows directly from Proposition~\ref{prop:cont_conv}.
\end{proof}

\section{Numerical Examples}
\label{sec:num_ex}

In this section, we validate our TSR formulation against time accurate integration and demonstrate the convergence properties of the TSR for three numerical examples.
First, we demonstrate the harmonic forcing case (Appendix~\ref{app:parseval}) and the convergence properties of the TSR formulation in the non-autonomous Mathieu oscillator.
We also validate the TSR formulation using the autonomous van der Pol oscillator by showing the TSR analysis can predict the optimal resolvent gain and response when the system is subject to an optimal quasi-periodic forcing.
We then demonstrate the ability of the TSR formulation to handle high dimensional systems in the 1-D complex Ginzburg--Landau partial differential equation, where we use the quasi-periodic formulation and predict the optimal gain and response.
In all examples, the TSR predictions align with time accurate integration which demonstrates that the method accurately predicts gains and response modes across a range of system complexities.

\subsection{Parametrically Forced Mathieu Oscillator}

The Mathieu oscillator is a second order non-autonomous linear time periodic ordinary differential equation with a periodic Jacobian.
The governing equation is given by
\begin{equation}
\ddot{y} + 2\zeta\dot{y} + \left(\omega_n^2 + \alpha\cos(\omega_0 t)\right)y = u(t),
\label{eq:mathieu_governing}
\end{equation}
where $\omega_n = 1.0$ is the natural frequency, $\zeta = 0.1$ is the damping ratio, $\alpha = 0.2$ is the parametric modulation amplitude, and $\omega_0 = \sqrt{2}\omega_n$ is the parametric modulation frequency.
The base frequency of this system is $\omega_0$, corresponding to a period $T_0 = 2\pi/\omega_0$.
The system is subject to a harmonic control input $u(t) = \mathrm{Re}(\hat{f} e^{j\omega_f t})$ with forcing frequency $\omega_f$.
Note that in this example, we restrict the forcing to be purely sinusiodal at frequency $\omega_f$ using the formulation described in Appendix~\ref{app:parseval}.

The system can be transformed into a first order state space system and written as
\begin{equation}
\frac{\d}{\d t}\begin{bmatrix} y \\ \dot{y} \end{bmatrix} =
\begin{bmatrix} 0 & 1 \\ -\omega_n^2 - \alpha\cos(\omega_0 t) & -2\zeta \end{bmatrix}
\begin{bmatrix} y \\ \dot{y} \end{bmatrix} +
\begin{bmatrix} 0 \\ 1 \end{bmatrix} u(t),
\end{equation}
where the state is given by $\mb{w} = [y, \dot{y}]^\top$.

The time periodic Jacobian and input matrix $\mb{B}$ can then be written as
\begin{equation}
\mb{J}(t) = \begin{bmatrix} 0 & 1 \\ -\omega_n^2 - \alpha\cos(\omega_0 t) & -2\zeta \end{bmatrix}, \qquad
\mb{B} = \begin{bmatrix} 0 \\ 1 \end{bmatrix},
\end{equation}
which comprises a linear time periodic system.
Note that in the case of the Mathieu oscillator, while the Jacobian is $T_0$-periodic, the base flow is a trivial zero vector $\bar{\mb{w}} = \mb{0}$.
This results in the perturbation $\boldsymbol{\eta}$ being exactly equal to the forced state $\mb{w}$.
Therefore, the perturbation can be written in the form $\dot{\boldsymbol{\eta}}(t) = \mb{J}(t)\boldsymbol{\eta} + \mb{B}\mb{u}(t)$.

The system can then be discretized with $2\pi$-periodic phase $\theta = \omega_0 t$ and we use $n_{\text{TS}} = 5$ collocation points.
The discretized periodic Jacobian can then be given as
\begin{equation}
\mb{J}(\theta_j) = \begin{bmatrix} 0 & 1 \\ -\omega_n^2 - \alpha\cos(\theta_j) & -2\zeta \end{bmatrix}, 
\end{equation}
and the full TSR operator can be formed
\begin{equation}
\mb{R}_{\text{TS}}(\omega_f) = (j\omega_f \mb{I} - \mb{J}_{\text{TS}})^{-1} \mb{B}_{\text{TS}},
\end{equation}
where $\mb{B}_\text{TS} = (1/\sqrt{n_\text{TS}})(\mb{1}_{n_\text{TS}} \otimes \mb{B})$ is the time-spectral input matrix to restrict $u(t)$ to be sinusiodal.
The maximum energy amplification $G(\omega_f)$ for a given forcing frequency $\omega_f$ is then given by the largest singular value of $\mb{R}_{\text{TS}}$.
Note that the Mathieu oscillator is a non-autonomous system and the periodicity of the base flow arises from the external driver $\mathrm{cos}(\omega_0 t)$ and not a neutral Floquet mode.
This fully defines the phase of the system and the projection method described in Section~\ref{sec:transverse} is not needed.
Also, due to the lack of a Neutral Floquet mode the operator $\mb{L}_\text{TS}$ is never singular, even at resonant forcing frequencies.

To validate that the TSR operator accurately predicts the maximum amplification we compare the resolvent gain calculated to the gain computed from time accurate integration for $120$ forcing frequencies $\omega_f$ between 0.2$\omega_0$ and $3\omega_0$.
To compute the simulated gain, we extract the harmonic forcing $\mb{f}(t) = \mathrm{Re}(\hat{\mb{f}}e^{j\omega_f t})$
from the leading right singular vector $\mb{v}_1$ and use time-accurate Runge--Kutta (RK45) integration.
Once the forced system reaches a steady orbit, we extract the maximum response amplitude $||\boldsymbol{\eta}||_{L^2}$ and divide by the forcing amplitude to compute the simulated gain.

\begin{figure}[h!]
\centering
\includegraphics[width=0.75\textwidth]{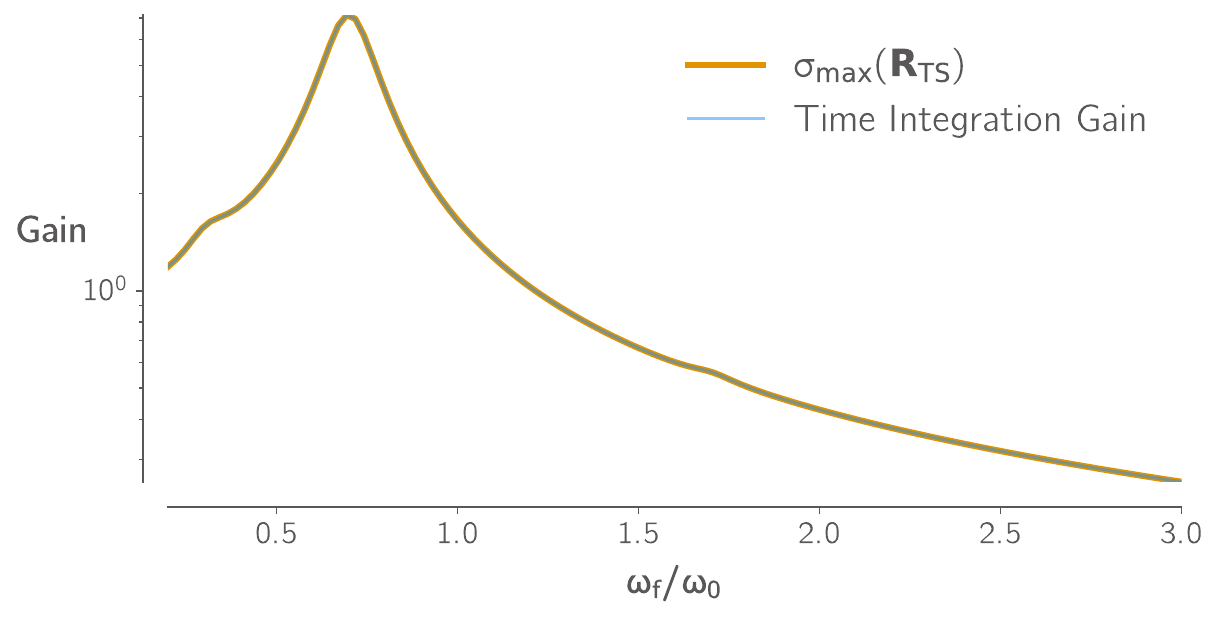}
\caption{The TSR gains perfectly matches the gains calculated from time accurate integration for all forcing frequencies.}
\label{fig:mathieu_resolvent}
\end{figure}
% [x] TODO HS-: plot 3 5 7 maybe? show the convergence
% MH-HS: for this problem, there is no visual difference in 3 or 5

\begin{figure}[h!]
\centering
\includegraphics[width=1\textwidth]{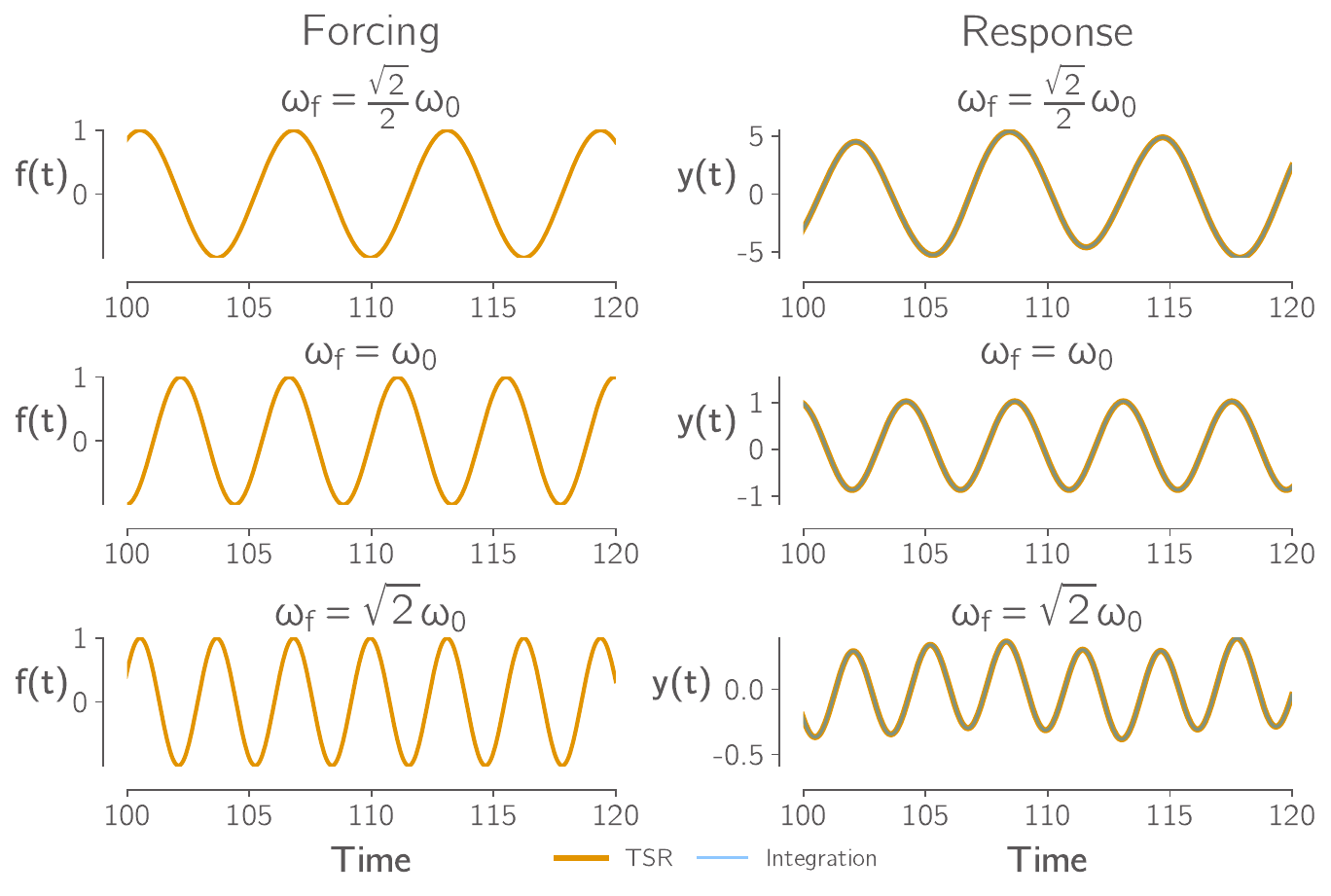}
\caption{The optimal forcing (left) and optimal response (right) from the TSR operator. The optimal response perfectly reconstructs the full quasi-periodic response when compared to the simulated result.}
\label{fig:mathieu_time_response}
\end{figure}

The comparison between the simulated and TSR gains are shown in Figure~\ref{fig:mathieu_resolvent}.
As can be seen, the full TSR operator accurately predicts the amplification for all frequency cases with only $n_{\text{TS}}=5$ collocation points.
The full TSR operator can not only accurately predict the resolvent gain via the maximum singular value, but also the response of the system through the left singular vector.
Shown in Figure~\ref{fig:mathieu_time_response} are the optimal forcing modes extracted from the right singular vector $\mb{v}_1$ as well as the predicted response of the system to this forcing extracted from the left singular vector $\mb{u}_1$ for a few selected forcing frequencies.
The optimal response mode is rotated to be aligned with the base flow as described in Section~\ref{sec:normalization} and scaled by $\sigma_\text{max}$ to have a physically accurate magnitude.
The simulated response of the linear system with optimal forcing from $\mb{v}_1$ is compared with the predicted response $\mb{u}_1$ in the figure.
As can be seen, the full TSR operator is able to perfectly reconstruct the quasi-periodic response with machine precision when compared to time accurate integration.

We also showcase the spectral convergence described in Section~\ref{sec:convergence} for the Mathieu Oscillator at the forcing point $\omega_f = \sqrt{2} \omega_0$.
This is done by computing the ground truth gain with a very high $n_\text{TS} = 501$ number of collocation points, then evaluating the convergence by inspecting the relative error between the gain computed using a coarse temporal grid and the ground truth.
This is shown in Figure~\ref{fig:spectral_convergence} as well as a trend line fitted to the spectral convergence curve.
As can be seen, the relative error decreases exponentially with increasing $n_\text{TS}$ points, numerically validating the spectral convergence property of the TSR.
% MH-HS: Highlighting that I added this paragraph and figure. 

\begin{figure}[!h]
\centering
\includegraphics[width=0.75\linewidth]{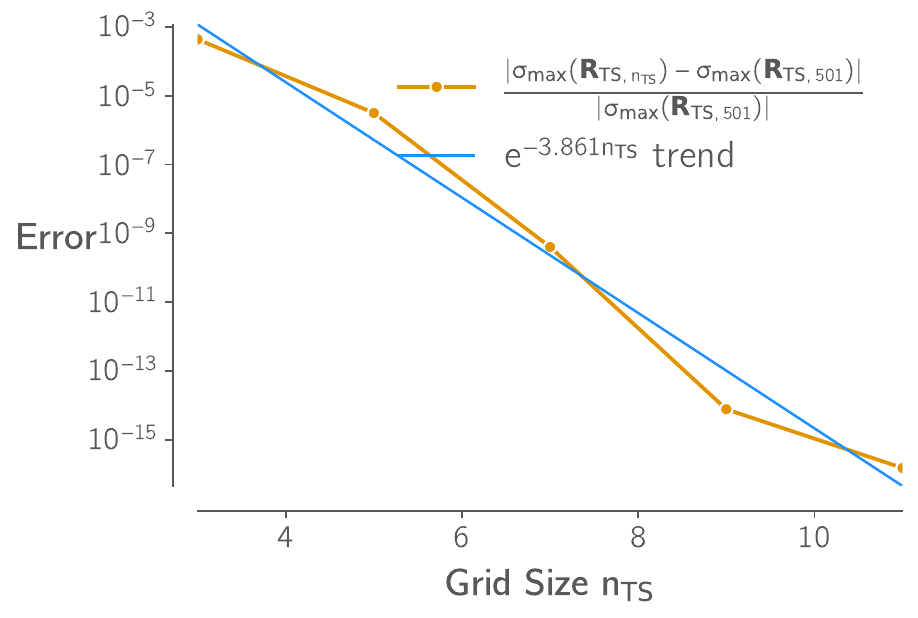}
\caption{Relative error of $\sigma_\text{max}(\mb{R}_\text{TS})$ with the ground truth gain computed with $501$ collocation points at $\omega_f = \sqrt{2}\omega_0$. The spectral convergence trend can be seen.}
\label{fig:spectral_convergence}
\end{figure}
% [x] TODO HS-: be explicit of what is plotted here, sigma? avoid dashed line
% MH-HS: removed dahed line and added formula

\subsection{Autonomous van der Pol Oscillator}
\label{sec:vdp}

Extending to an autonomous nonlinear dynamical system, we consider the van der Pol oscillator, a classical example of an autonomous system with nonlinear damping.
The governing equation is
\begin{equation}
\ddot{y} - \mu(1 - y^2)\dot{y} + y = u(t),
\label{eq:vdp}
\end{equation}
where the nonlinear damping is governed by $\mu = 1.0$.
In this example, we consider a quasi-periodic forcing $u(t)$ whose spectrum includes frequencies at $\omega_f + k\omega_0$.
Introducing the state $\mb{w} = [y, \dot{y}]^\top$, the system can be written in first-order form as
\begin{equation}
\frac{\d}{\d t}\begin{bmatrix} y \\ \dot{y} \end{bmatrix} =
\begin{bmatrix} \dot{y} \\ \mu(1 - y^2)\dot{y} - y \end{bmatrix} +
\begin{bmatrix} 0 \\ 1 \end{bmatrix} u(t).
\end{equation}
The base flow $\bar{\mb{w}}$ of this system is a LCO whose frequency depends on the choice of $\mu$.
In the case of $\mu = 1.0$, the base flow, shown in Figure~\ref{fig:vdp_base_flow}, has period $T_0 \approx 6.67$ and is computed using the time spectral method to ensure machine precision.

As can be seen in the right column of Figure~\ref{fig:vdp_base_flow}, different discretization levels $n_\text{TS}$ lead to different ranges of resolved eigenvalues $\lambda$ of the time spectral Jacobian $\mb{J}_\text{TS}$.
These eigenvalues correspond to the Floquet exponents of the linearized dynamics and the neutral $\mathrm{Re}(\lambda) = 0$ and stable $\mathrm{Re}(\lambda) \approx -1$ Floquet modes can clearly be seen in the figure.
As discussed in Section~\ref{sec:singularity_jacobian}, the time-spectral formulation generates a spectrum characterized by vertical ladders of eigenvalues where each physical Floquet exponent is repeated at integer shifts of the base frequency ($\lambda + jk\omega_0$).
The extent of this ladder corresponds to the temporal resolution, capturing frequency modes up to $|k| \leq (n_\text{TS}-1)/2$.
Consequently, increasing the number of collocation points from $n_\text{TS}=11$ to $n_\text{TS}=31$ extends the resolved spectrum, allowing for representation of higher frequency temporal dynamics for each Floquet mode.

\begin{figure}[h!]
\centering
\includegraphics[width=\textwidth]{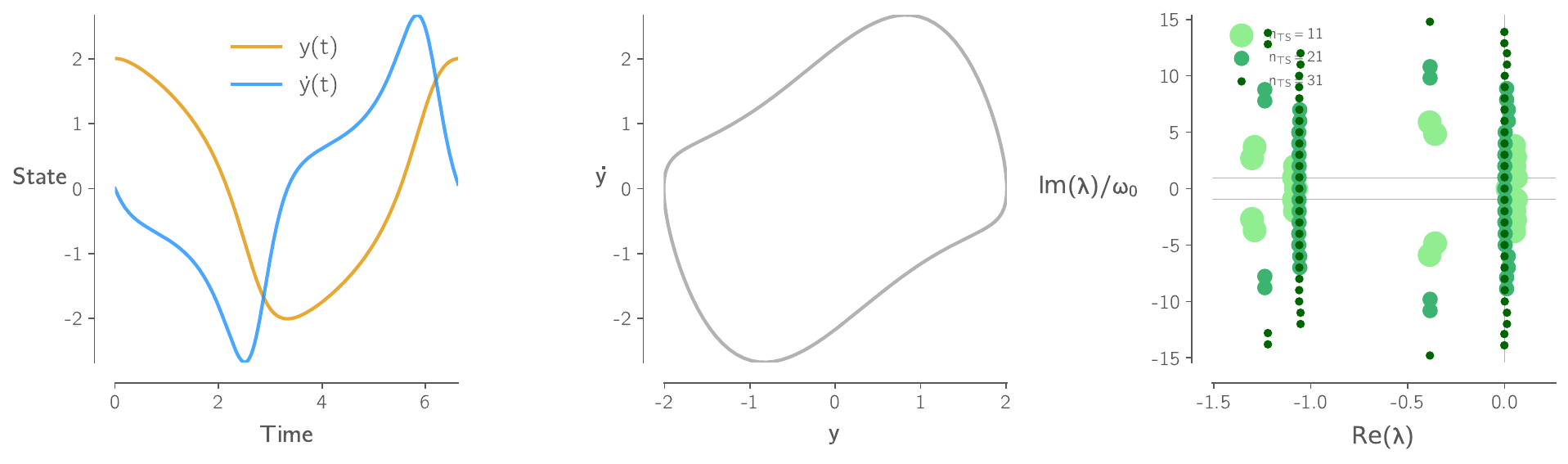}
\caption{Base flow of the van der Pol oscillator. The timeseries of the state variables (Left), LCO phase portrait (Middle), and resolved modes for different value of $n_\text{TS}$ (Right) are shown.}
\label{fig:vdp_base_flow}
\end{figure}

Linearizing the system around this base flow with the time-periodic Jacobian
\begin{equation}
\mb{J}(\bar{\mb{w}}) = 
\begin{bmatrix}
0 & 1\\
-2\mu y \dot{y} - 1 & \mu (1-y^2)
\end{bmatrix} ,
\end{equation}
gives the linearized system
\begin{equation}
\dot{\boldsymbol{\eta}}(t) = \mb{J}(\bar{\mb{w}})\boldsymbol{\eta}(t) + \mb{B}u(t) ,
\end{equation}
where $\mb{B}$ = $\begin{bmatrix}0 & 1\end{bmatrix}^\top$ and $\boldsymbol{\eta}(t)$ is the perturbation from the baseflow.

Discretizing the base flow into $n_\text{TS} = 31$ equally spaced collocation points with the phase variable $\theta_j = \omega_0 t$ gives the discretized periodic Jacobian
\begin{equation}
\mb{J}(\theta_j) = 
\begin{bmatrix}
0 & 1\\
-2\mu y_j \dot{y}_j - 1 & \mu (1 - y_j^2)
\end{bmatrix} ,
\end{equation}
where $y_j$ and $\dot{y}_j$ is the base flow evaluated at each collocation point.

Following the procedure for autonomous systems in Section~\ref{sec:auto_sysms}, we apply the oblique projector $\mb{P}_\omega$ implicitly through vector-vector products as described in Appendix~\ref{app:efficient_resolvent} to compute the SVD of the transverse resolvent operator $\mb{R}_{\Sigma}$ where the dominant singular value is the transverse gain $G_\Sigma(\omega_f)$.
This approach ensures that the resolvent gain is bounded by projecting the operator $\mb{L}_\text{TS}$ to the subspace $\Sigma_\text{TS}$ in which it is bijective and invariant.
This formulation projects both the forcing and response orthogonal to the modulated adjoint neutral mode $\tilde{\mb{q}}_\omega$, ensuring that the physical response $\tilde{\boldsymbol{\eta}}_{\Sigma}$ does not have any phase drift component.
Once the transverse gain is obtained, the reconstructed gain is computed as described in Section~\ref{sec:reconstruction}.
This reconstructed gain allows us to accurately compare our resolvent gain to time-accurate integration.
To highlight the difference between the reconstructed gain and the unprojected TSR gain, we also compute the unprojected gain $G(\omega_f)$ from the operator in Eq.~\ref{eq:discrete_op}.

In this example, we allow the forcing $u(t)$ to be quasi-periodic containing components of both the specified forcing frequency and the base frequency $\omega_f + k\omega_0$.
To do this, we construct the time-spectral input matrix in the standard way by discretizing it at each collocation point $\mb{B}_\text{TS} = \mathrm{blkdiag}(\mb{B},\dots,\mb{B})$.
This formulation removes the need for the $1/\sqrt{n_\text{TS}}$ Parseval scaling used in the Mathieu example.
Note that the predicted resolvent gain using the quasi-periodic forcing approach will always be equal to or larger than the gain predicted using the purely harmonic forcing formulation as the harmonic forcing is a special case which lies in the spectrum of the quasi-periodic forcing ($k=0$).

Both the reconstructed and unprotected TSR gains are validated against time-accurate integration for 300 forcing frequencies $\omega_f \in \left[0.2\omega_0, 2.3\omega_0\right]$ as shown in Figure~\ref{fig:vdp_gains}.
For the unprojected gain $G(\omega_f)$, the optimal forcing vector $\tilde{\mb{f}}$ is extracted from the SVD of the full TSR operator and used to drive the linearized system.
As expected, this gain is the largest as the full forcing is not orthogonal to the adjoint mode and excites phase drift in the system.
At resonance (integer multiples of the base frequency), this manifests as a singularity in the linear operator $\mb{L}_\text{TS}$ corresponding to secular growth in the integrated system.
The reconstructed gain includes both the particular response $\tilde{\boldsymbol{\eta}}_{\Sigma}$ predicted by the transverse operator as well as the homogeneous neutral mode component $c\tilde{\mb{p}}_0$.
The reconstructed gain perfectly matches the gain computed from time-accurate integration initialized from the base flow.
In both cases, the TSR predictions show excellent agreement with the time-accurate integration.

\begin{figure}[h!]
\centering
\includegraphics[width=\textwidth]{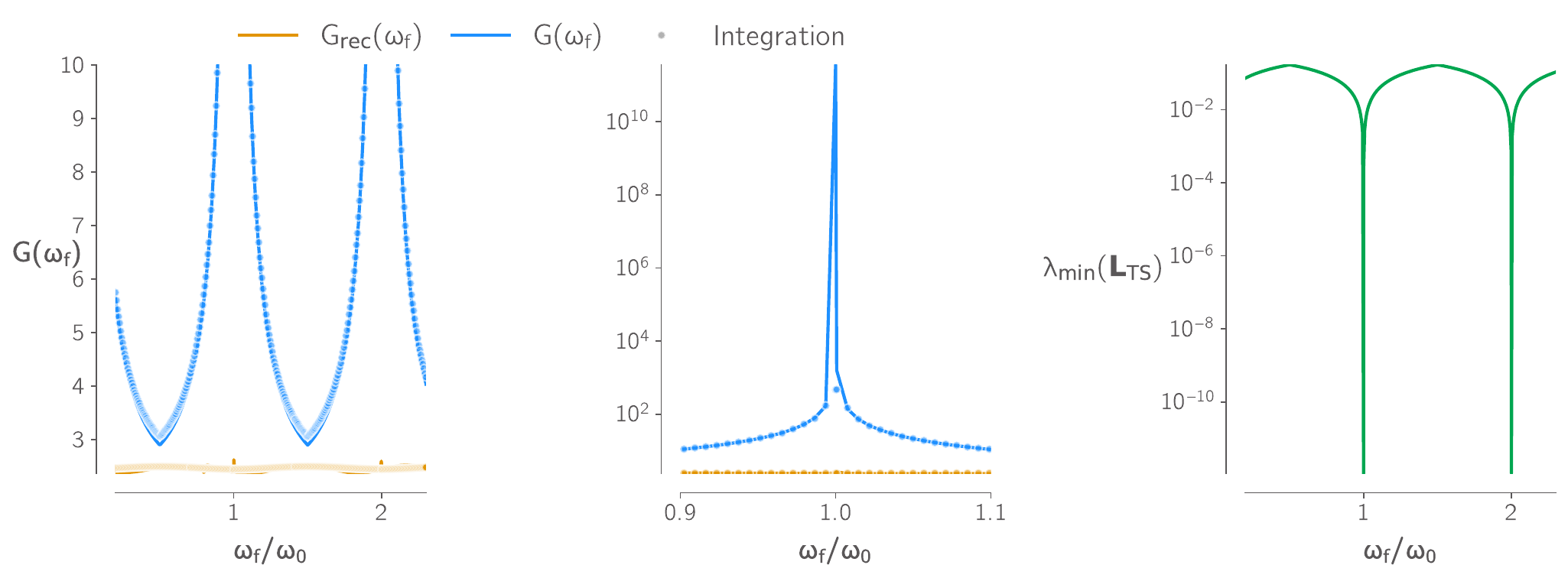}
\caption{Resolvent gain (Left), zoom near resonance (Middle), and minimum singular value of $\mb{L}_\text{TS}$ (Right) for the van der Pol oscillator.}
\label{fig:vdp_gains}
\end{figure}

The time-domain reconstruction of the optimal response modes is compared against time-accurate integration in Figure~\ref{fig:vdp_modes} for three selected forcing frequencies.
The middle column shows the transverse response $\tilde{\boldsymbol{\eta}}_{\Sigma}$ predicted by the left singular vector of the transverse resolvent operator $\mb{R}_{\Sigma}$.
To validate this against the integration of the linearized dynamics (which inherently includes phase drift from the initial condition), we isolate the transverse component of the integrated response by subtracting the scaled time derivative of the base flow.
The excellent agreement confirms that the transverse operator correctly identifies the shape deformation of the limit cycle.
The right column shows the full reconstructed response which adds the constant homogeneous phase shift $c\tilde{\mb{p}}_0$ required to satisfy the initial condition $\boldsymbol{\eta}(0) = \mb{0}$.
It can be seen that even in the resonant case, the response remains bounded, showing that the forcing $\tilde{\mb{f}}_\Sigma$ does not excite unbounded phase drift in the system.

In order to verify the physical accuracy of the transverse resolvent formulation, we perform a stroboscopic analysis of the linearized response as shown in Figure~\ref{fig:vdp_phase}.
The horizontal axis represents the instantaneous phase drift $c(t_k)$ as described in Section~\ref{sec:physical_response}, which is extracted at each base-flow period $t_k = k T_0$.
This instantaneous phase drift is computed as $c(t_k) = \langle \tilde{\mathbf{q}}_0(t_k), \boldsymbol{\eta}(t_k) \rangle / \langle \tilde{\mathbf{q}}_0(t_k), \tilde{\mathbf{p}}_0(t_k) \rangle$.
The vertical axis represents the transverse response magnitude $\|\mathbf{v}(t_k)\|$, where $\mathbf{v} = \boldsymbol{\eta} - c\tilde{\mathbf{p}}_0$ isolates the shape deformation of the limit cycle from the phase shift.
To generate these results, we extract the optimal forcing envelopes for both the projected $\tilde{\mathbf{f}}_{\Sigma}$ and unprojected $\tilde{\mathbf{f}}$ cases directly from the TSR operators.
The results clearly demonstrate that the projected forcing does not excite any phase drift, resulting in a response that remains a fixed point in the phase portrait.
In contrast, the unprojected forcing excites the neutral Floquet mode, causing the response to exhibit linear secular growth in the phase coordinate $c(t_k)$ over successive periods.
This numerical validation confirms that the transverse TSR analysis successfully isolates the stable shape deformation of autonomous systems by filtering out the physically unbounded phase sensitivity.

\begin{figure}
    \centering
    \includegraphics[width=1\linewidth]{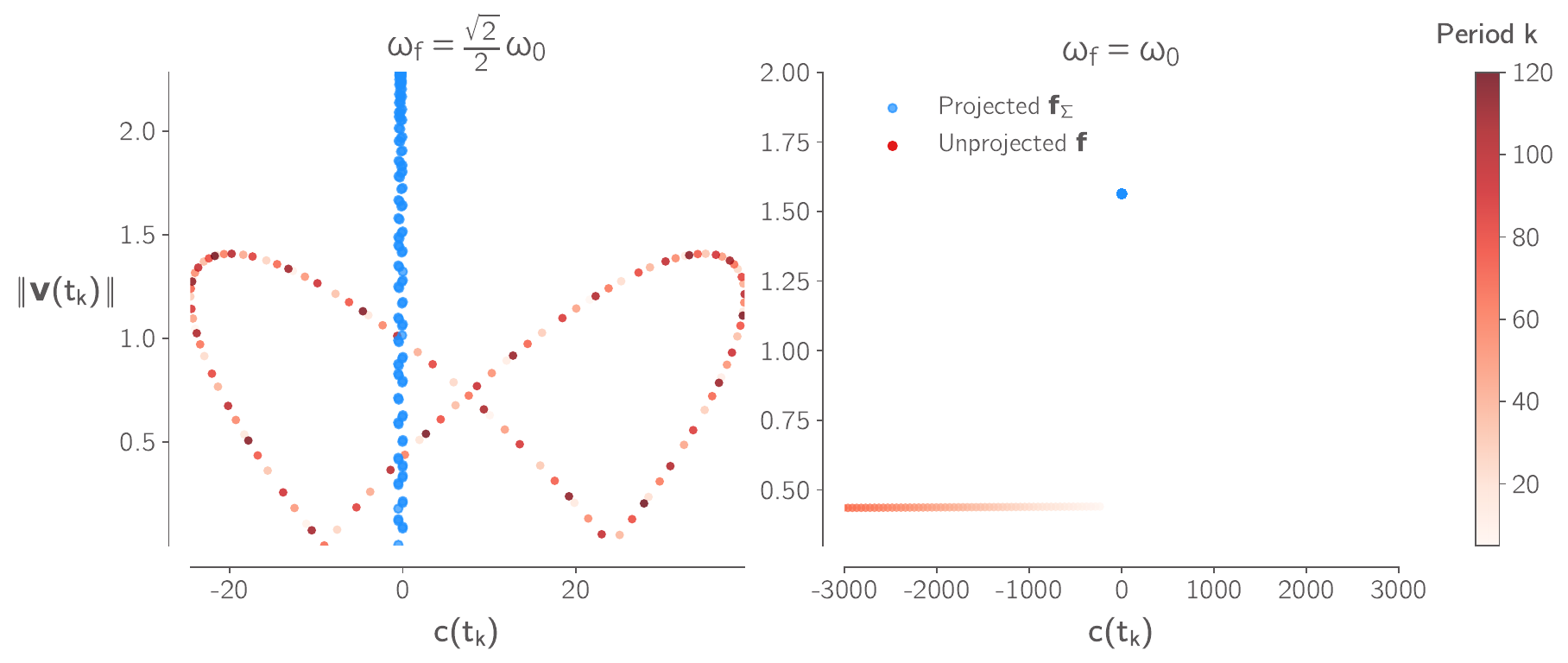}
    \caption{The projected forcing $\mb{f}_\Sigma$ excites near-zero phase drift in the system for both frequency cases.}
    \label{fig:vdp_phase}
\end{figure}

\begin{figure}[h!]
\centering
\includegraphics[width=\textwidth]{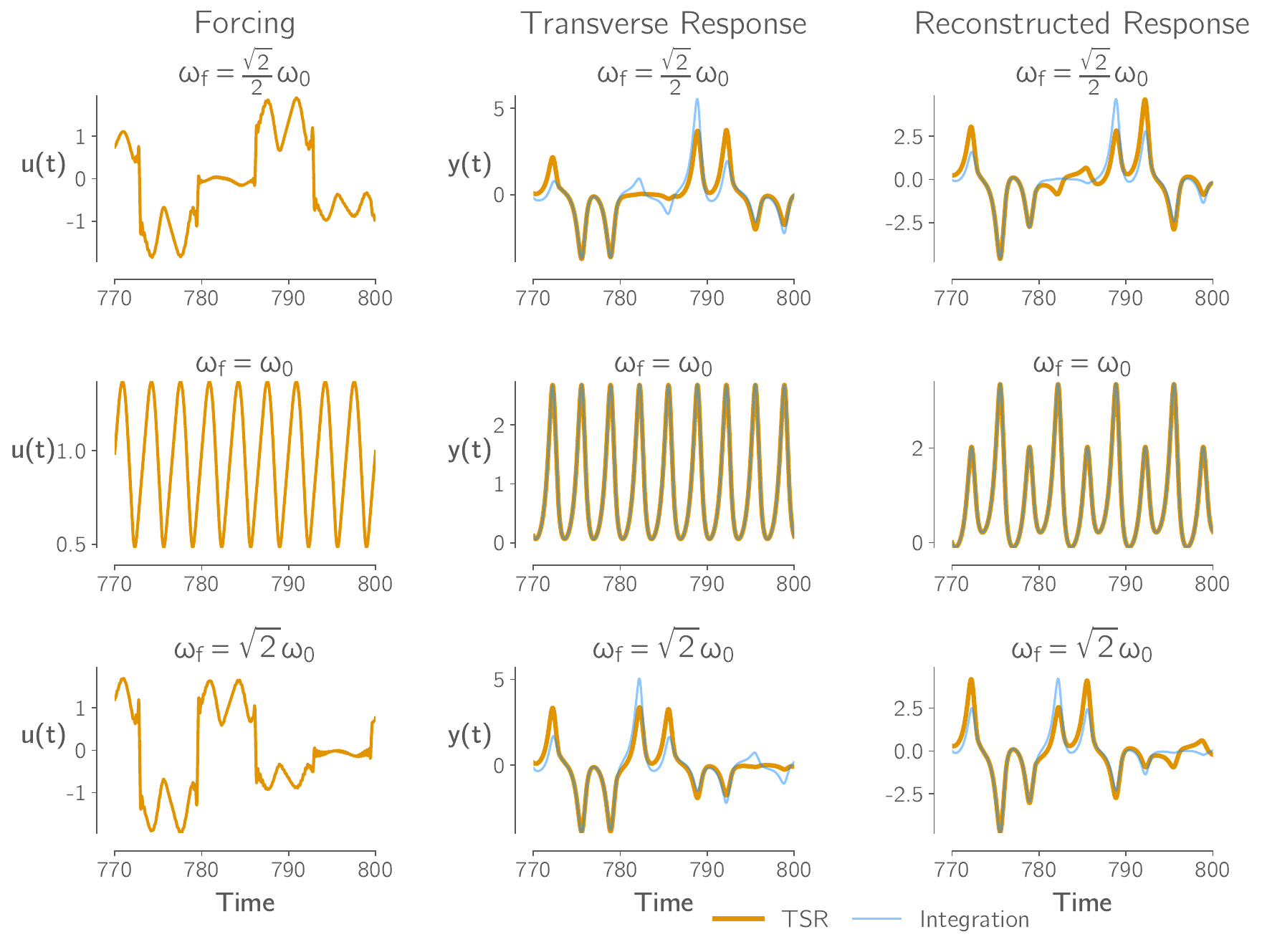}
\caption{TSR Timeseries response of the van der Pol oscillator compared with ODE integration. The forcing (left) is orthogonal to the adjoint mode and does not excite any phase drift in the full response (right). The transverse response (middle) does not contain the homogeneous phase component.}
\label{fig:vdp_modes}
\end{figure}

\subsection{Complex Ginzburg-Landau Equation}
\label{sec:cgle_validation}

The complex Ginzburg--Landau (CGL) equation provides a high dimensional example governed by a PDE.
A detailed overview of the CGL equation and its properties are given by~\citet{Aranson2002} and one form of the governing equation is
\begin{equation}
\partial_t {A} = {A} + (1 + j\alpha)\Delta {A} - (1 + j\beta)|{A}|^2{A} ,
\label{eq:cgl_complex}
\end{equation}
where $A(x,t) \in \mathbb{C}$ is the complex state, $\alpha = 0.1$ is the dispersion parameter and $\beta = 0.2$ is the nonlinear frequency parameter.
Periodic boundary conditions are used with domain $x \in [0, L]$ with $L=20$ discretized into $n_{\text{node}}=50$ nodes using the finite difference method.

The complex state can be decomposed into its real and imaginary parts, $A = w_1 + jw_2$, to form a purely real state space system of form Eq.~\ref{eq:nonlinear_system}
\begin{equation}
\begin{bmatrix}
\partial_t{w_1}\\
\partial_t{w_2}
\end{bmatrix}
=
\begin{bmatrix}
1 + \Delta & -\alpha\Delta\\
\alpha\Delta & 1 + \Delta
\end{bmatrix}
\begin{bmatrix}
{w_1}\\
{w_2}
\end{bmatrix}
- ({w}_1^2 + {w}_2^2)
\begin{bmatrix}
1 & -\beta\\
\beta & 1
\end{bmatrix}
\begin{bmatrix}
{w}_1\\
{w}_2
\end{bmatrix} ,
\label{eq:cgl_real}
\end{equation}
where $\Delta$ is the Laplacian operator.

We consider a plane wave (PW) base flow given by
\begin{equation}
A_{\text{PW}}(x,t) = \sqrt{1-k^2}\exp(j(kx-\omega_0 t)) ,
\label{eq:cgl_planewave}
\end{equation}
with temporal frequency $\omega_0 = \beta({1-k^2}) + \alpha k^2$.
%This can be verified by substitution: the Laplacian gives $\Delta A_{\text{PW}} = -k^2 A_{\text{PW}}$ and the nonlinear term gives $|A_{\text{PW}}|^2 A_{\text{PW}} = (1-k^2)A_{\text{PW}}$, yielding $\partial_t A_{\text{PW}} = -j\omega_0 A_{\text{PW}}$.
Using the wavenumber $k=2\pi/L$, this plane wave solution defines a $T_0 = 2\pi / \omega_0$ periodic base flow $\bar{\mb{w}}(t)$ in the sense of Eq.~\ref{eq:periodic_resolvent}.
Note that unlike a limit cycle, the plane wave is not an isolated periodic orbit; it belongs to a continuous family parameterized by $k$ and admits spatial translation symmetry.
The contour plot shown in Figure~\ref{fig:cgl_baseflow} shows the base flow for the chosen parameters.
\begin{figure}[!h]
\centering
\includegraphics[width=1\linewidth]{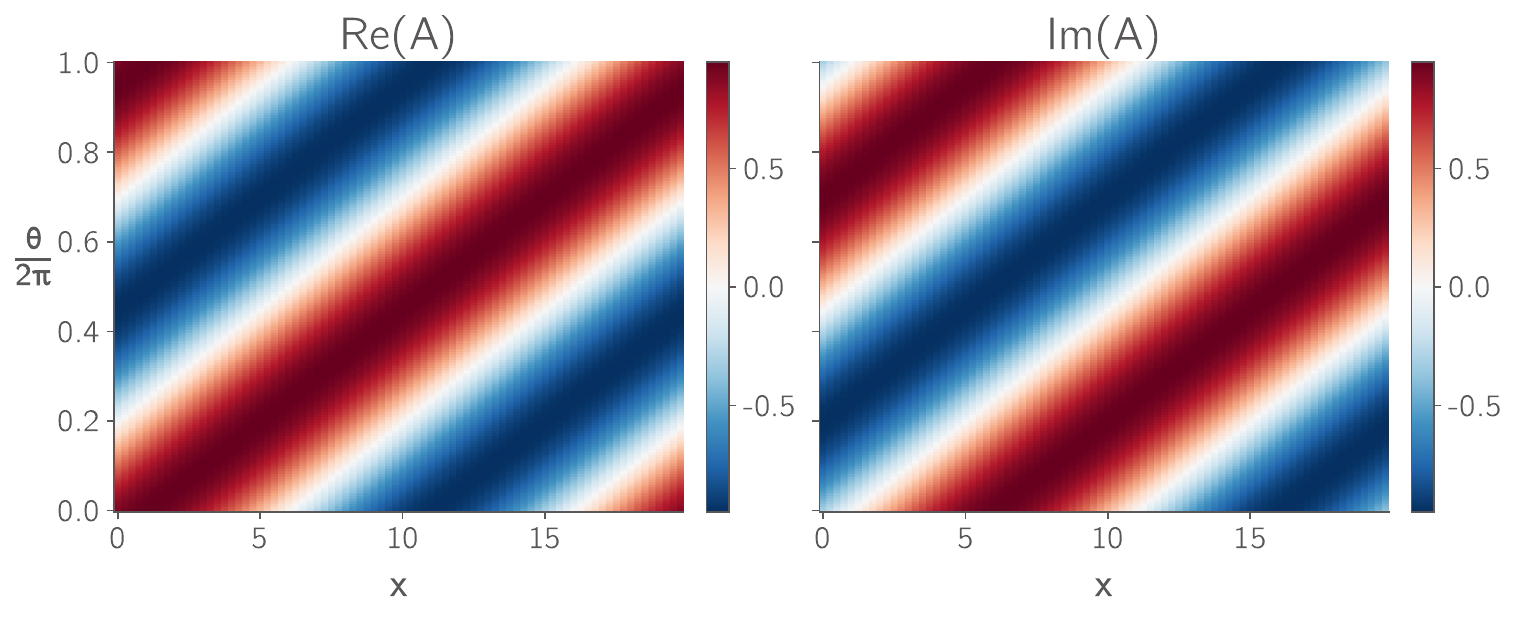}
\caption{The plane wave solution defines the periodic base flow for the CGL equation.}
\label{fig:cgl_baseflow}
\end{figure}

Linearizing Eq.~\ref{eq:cgl_real} around this base flow yields the time-periodic Jacobian $\mb{J}(t) = \partial \mb{r}/\partial \mb{w}|_{\bar{\mb{w}}(t)}$, which can be evaluated analytically at any point along the orbit.
This forms a linear time-periodic (LTP) system of the form Eq.~\ref{eq:periodic_resolvent} with spatially uniform forcing applied to all states through $\mb{B} = \mb{I}$.
The discrete Jacobian $\mb{J}(\theta_j)$ is then evaluated at the time-spectral collocation points with $n_{\text{TS}} = 21$.
% [x] TODO HS-: avoid even number of time instance.
% MH-HS: fixed.

The sparsity structure of this discrete Jacobian can be exploited to efficiently compute the adjoint neutral mode and apply the transverse TSR operator as described in Appendix~\ref{app:efficient_resolvent}.
Due to the autonomous nature of the CGL system, we consider the operator $\mb{R}_{\Sigma}$ to map from the quasi-periodic forcing $\tilde{\mb{f}}_{\Sigma}$ to the steady state transverse response $\tilde{\boldsymbol{\eta}}_{\Sigma}$.
To allow the forcing to be quasi-periodic we again form the time-spectral input matrix as in Eq~\ref{eq:input_mat}.
To efficiently compute the SVD of the resolvent operator we implicitly apply the oblique projection through vector-vector products (Eq.~\ref{eq:implicit_projection}).
The forward and adjoint linear systems used to compute the action of $\mb{R}_{\Sigma}$ on randomized vectors are solved using the FD-preconditioned GMRES detailed in~\ref{sec:fd_preconditioner}.
This approach efficiently solves the SVD of $\mb{R}_{\Sigma}$ without ever forming the dense operator $\mb{P}_\omega\mb{L}_\text{TS}^{-1}\mb{P}_\omega$.

\begin{figure}[!h]
\centering
\includegraphics[width=0.75\linewidth]{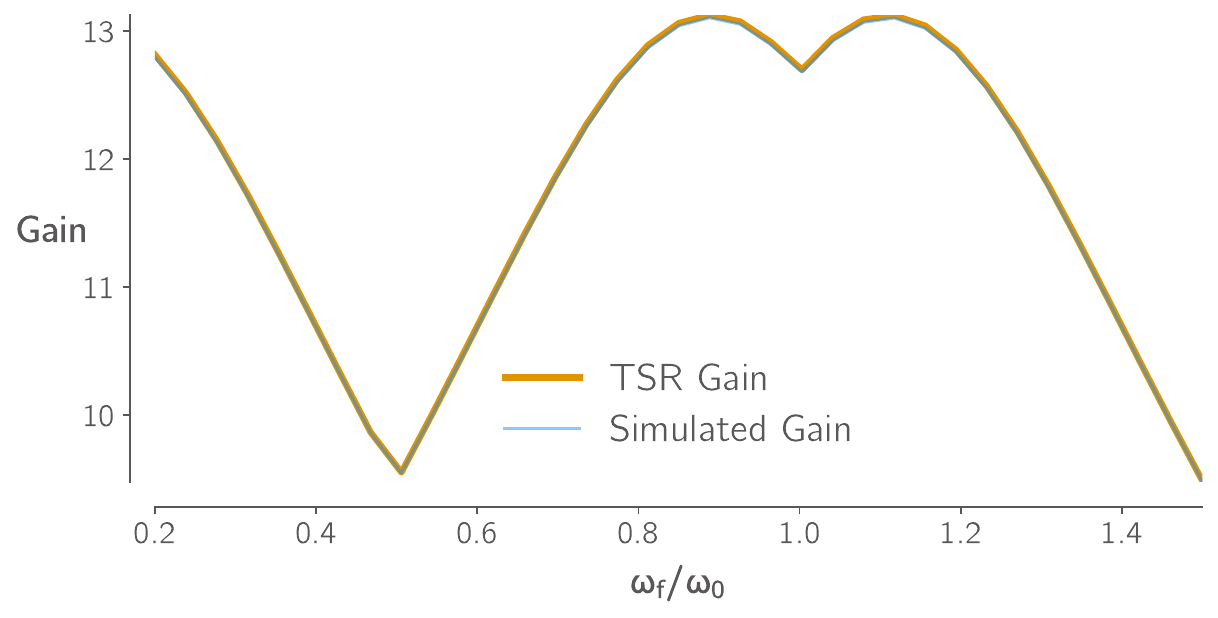}
\caption{The gains predicted from the TSR operator perfectly match the gains from the PDE simulation.}
\label{fig:cgl_gains}
\end{figure}

The reconstructed resolvent gain $G_\text{rec}(\omega_f)$ is compared to time-accurate integration of the linearized system at 35 different forcing frequencies between $0.2\omega_0$ and $1.5\omega_0$ in Figure~\ref{fig:cgl_gains}.
For each forcing frequency, the optimal transverse forcing $\tilde{\mb{f}}_{\Sigma}$ is extracted from the SVD of the transverse resolvent operator and used to drive the linearized dynamics via time-accurate integration.
The maximum energy amplification extracted from the simulated response can be seen to strongly agree with the reconstructed resolvent gain which considers the transverse response  $\tilde{\boldsymbol{\eta}}_{\Sigma}$ and the homogeneous phase component $c\tilde{\mb{p}}_0$.

\begin{figure}[!h]
\centering
\includegraphics[width=1\linewidth]{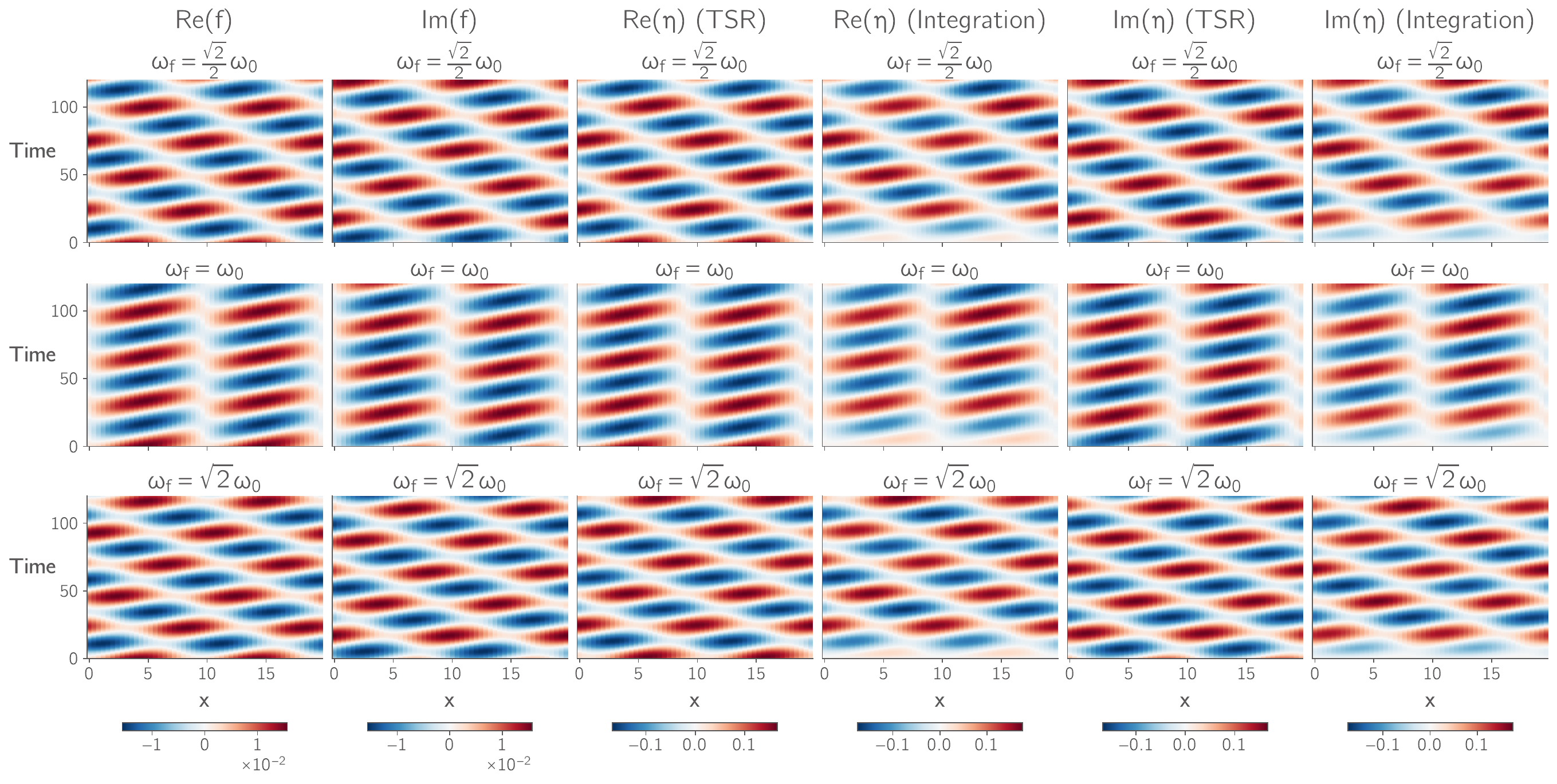}
\caption{When the PDE system is integrated with the optimal forcing vector (left) the simulated response matches that of the optimal response mode from the TSR operator.}
\label{fig:cgl_modes}
\end{figure}
% [x] TODO HS-: 1e-4 -> 10^{-4} 1e-6-> 10^{-6}
% [x] TODO HS-: make sure clarify that the response mode is quasi periodic.
% MH-HS: added some text about this. Also shown in the new figure
% [x] TODO HS-: how about in this figure, replace the forcing using the contour as well. Add another figure, monitor several points in the domain, show: line plots of the forcing mode, ODE response, ODE response - reference, TSA response mode.
% MH-HS: Added.

\begin{figure}[!h]
\centering
\includegraphics[width=1\linewidth]{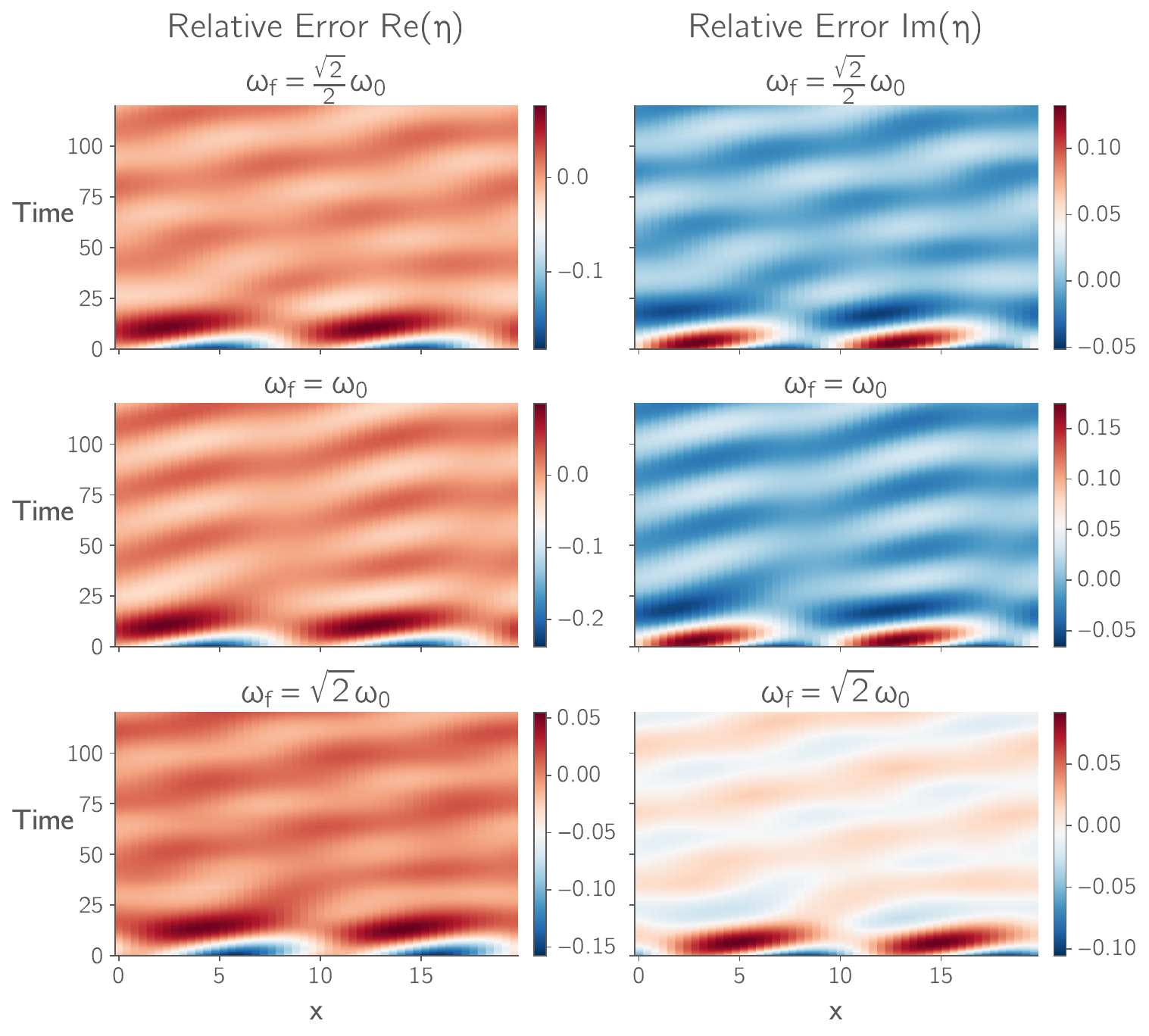}
\caption{The relative error between the PDE simulation and the TSR responses is minimal.}
\label{fig:cgl_error}
\end{figure}
% [x] TODO HS-: the large relative error comes from the small divisor? Try use abs(err)/(abs(value) + 1e-6) etc to avoid this.
% MH-HS: Yes, fixed.

The reconstructed spatiotemporal response $\tilde{\boldsymbol{\eta}}_{\Sigma} + c\tilde{\mb{p}}_0$ is also compared with full non-linear PDE simulation for selected forcing frequencies in Figure~\ref{fig:cgl_modes}.
The left two columns show the real and imaginary forcing component, which consists of the optimal quasi-periodic and spatially varying forcing.
The right columns show the reconstructed response predicted by the transverse TSR operator and the non-linear PDE simulation, demonstrating excellent agreement.
This is quantified in Figure~\ref{fig:cgl_error}, which shows that the relative error between the TSR prediction and the simulated PDE response is minimal.

\begin{figure}
\centering
\includegraphics[width=1\linewidth]{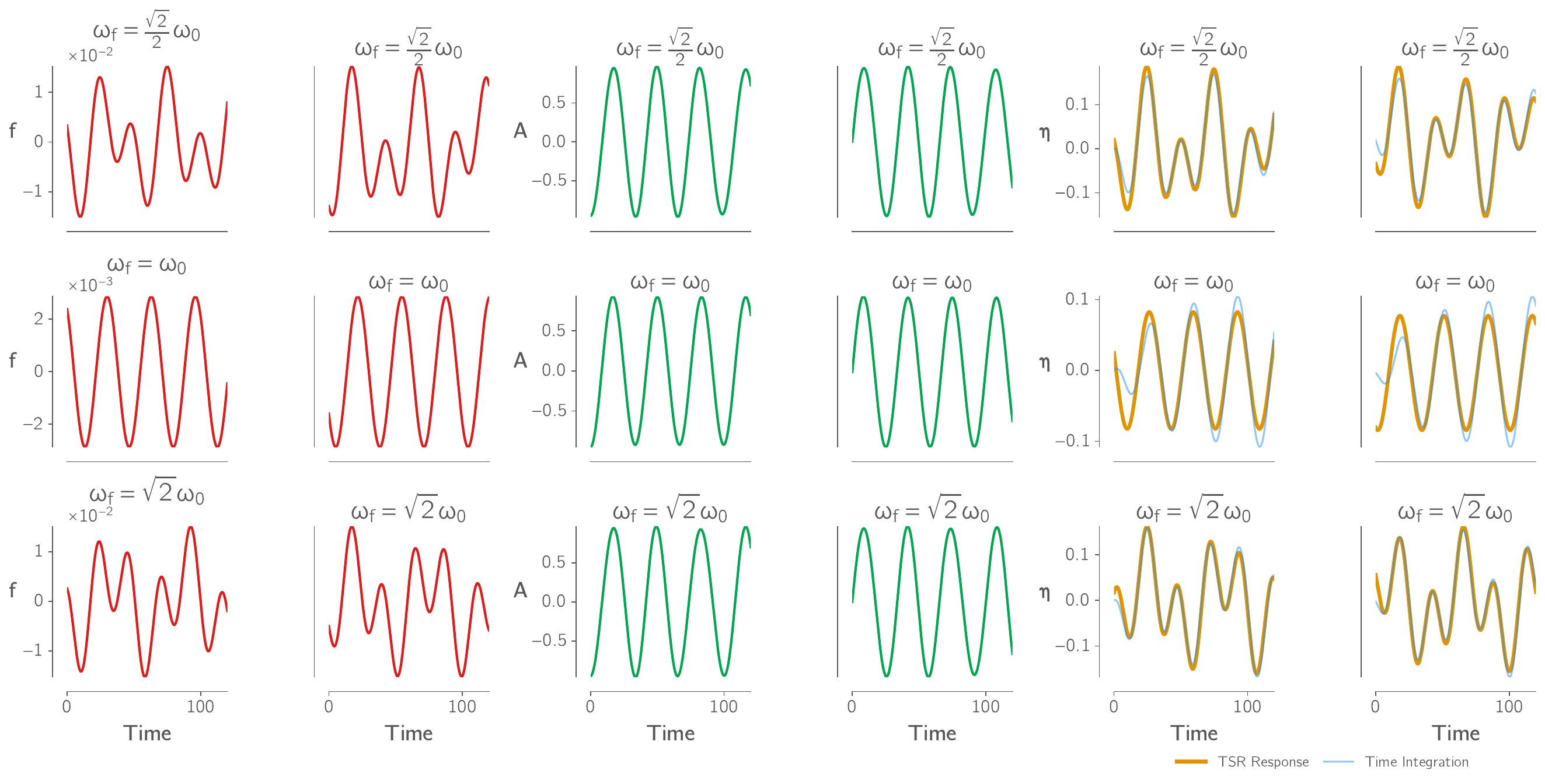}
\caption{Temporal forcing and responses of the full nonlinear system $A(L/2,t)$ as well as the perturbation from the base flow $\eta(L/2,t)$.}
\label{fig:cgl_lineplots}
\end{figure}

Note that at all frequencies, due to the quasi-periodic nature of the forcing spectrum, the response is also quasi-periodic.
This further demonstrated in Figure~\ref{fig:cgl_lineplots}, which shows the optimal response, full nonlinear response, and perturbation from the base flow for the spatial point $x=L/2$.
In the case of a non-commensurate forcing frequency, the response is quasi-periodic and consists of a combination of the base frequency $\omega_0$ and the forcing frequency $\omega_f$.

\section{Conclusion}

This paper presented the time-spectral resolvent (TSR) operator, a computationally efficient framework for harmonic resolvent analysis of both autonomous and non-autonomous systems with time-periodic base flows.
By leveraging Fourier collocation and the time-spectral method, the TSR operator maps forcing and response envelopes defined on a discrete temporal grid rather than truncated Fourier coefficients, avoiding the explicit construction of large block-Toeplitz matrices required by frequency-domain formulations.

The key advantages of the TSR formulation are threefold.
First, the method achieves spectral convergence with respect to the number of collocation points, enabling accurate gain predictions with relatively few temporal samples.
Second, the Jacobian can be evaluated directly at collocation points without requiring explicit computation of Fourier coefficients, simplifying implementation and reducing computational overhead.
Third, the formulation directly maps the discrete forcing to the discrete quasi-periodic response, avoiding the complication of reconstructing the response from the Fourier coefficients.

Validation against time-accurate integration demonstrated the accuracy of the TSR operator for three canonical problems.
For the parametrically forced Mathieu oscillator, the TSR operator accurately predicted the resolvent gain across a range of forcing frequencies using only $n_{\text{TS}} = 5$ collocation points.
Spectral convergence was also demonstrated using the Mathieu oscillator, showing that the relative error decreases exponentially with the number of collocation points used.
The TSR framework was also validated using the autonomous van der Pol oscillator, where it was shown that the TSR framework, with special handling, can accurately predict the response and resolvent gain for autonomous systems which exhibit a neutral Floquet mode.
For the complex Ginzburg-Landau equation with a plane wave base flow, the transverse TSR operator matched both the gain predictions and optimal response modes from full nonlinear PDE simulations with small magnitude forcing using $n_{\text{TS}} = 21$ collocation points.
In all cases, the optimal forcing extracted from the TSR operator produced responses that agreed with direct numerical integration.

Future work will extend the TSR framework to high-dimensional computational fluid dynamics applications, where the efficiency gains from avoiding block-Toeplitz construction become more significant.
The combination of the TSR operator with matrix-free iterative solvers and adjoint-based optimization presents a promising direction for resolvent-based flow control of unsteady aerodynamic systems.

\bibliographystyle{plainnat}
\bibliography{references}

@book{HornJohnson2013,
  author    = {Horn, Roger A. and Johnson, Charles R.},
  title     = {Matrix Analysis},
  edition   = {2nd},
  publisher = {Cambridge University Press},
  year      = {2013},
  isbn      = {978-0-521-54823-6}
}

@article{Saad1986,
  title = {GMRES: A Generalized Minimal Residual Algorithm for Solving Nonsymmetric Linear Systems},
  volume = {7},
  ISSN = {2168-3417},
  url = {http://dx.doi.org/10.1137/0907058},
  DOI = {10.1137/0907058},
  number = {3},
  journal = {SIAM Journal on Scientific and Statistical Computing},
  publisher = {Society for Industrial & Applied Mathematics (SIAM)},
  author = {Saad,  Youcef and Schultz,  Martin H.},
  year = {1986},
  month = jul,
  pages = {856–869}
}

@misc{bongarzone2025,
  doi = {10.48550/ARXIV.2503.08401},
  url = {https://arxiv.org/abs/2503.08401},
  author = {Bongarzone,  Alessandro and Content,  Cédric and Sipp,  Denis and Leclercq,  Colin},
  title = {Adjoint-free method for mean resolvent analysis of periodic flows},
  publisher = {arXiv},
  year = {2025},
  copyright = {Creative Commons Attribution 4.0 International}
}

@book{bottcher2012spectral,
  author    = {B{\"o}ttcher, Albrecht and Silbermann, Bernd},
  title     = {Introduction to Large Truncated {T}oeplitz Matrices},
  publisher = {Springer},
  year      = {1999},
  series    = {Universitext},
  doi       = {10.1007/978-1-4612-1426-7}
}

@article{Aranson2002,
  title = {The world of the complex Ginzburg-Landau equation},
  volume = {74},
  ISSN = {1539-0756},
  url = {http://dx.doi.org/10.1103/RevModPhys.74.99},
  DOI = {10.1103/revmodphys.74.99},
  number = {1},
  journal = {Reviews of Modern Physics},
  publisher = {American Physical Society (APS)},
  author = {Aranson,  Igor S. and Kramer,  Lorenz},
  year = {2002},
  month = feb,
  pages = {99–143}
}

@article{Koopman1931,
  title = {Hamiltonian Systems and Transformation in Hilbert Space},
  volume = {17},
  ISSN = {1091-6490},
  url = {http://dx.doi.org/10.1073/pnas.17.5.315},
  DOI = {10.1073/pnas.17.5.315},
  number = {5},
  journal = {Proceedings of the National Academy of Sciences},
  publisher = {Proceedings of the National Academy of Sciences},
  author = {Koopman,  B. O.},
  year = {1931},
  month = may,
  pages = {315–318}
}

@article{Eiter2022,
  title = {On the Oseen-Type Resolvent Problem Associated with Time-Periodic Flow past a Rotating Body},
  volume = {54},
  ISSN = {1095-7154},
  url = {http://dx.doi.org/10.1137/21M1456728},
  DOI = {10.1137/21m1456728},
  number = {4},
  journal = {SIAM Journal on Mathematical Analysis},
  publisher = {Society for Industrial & Applied Mathematics (SIAM)},
  author = {Eiter,  Thomas},
  year = {2022},
  month = aug,
  pages = {4987–5012}
}

@article{Susuki2021,
  title = {Koopman Resolvent: A Laplace-Domain Analysis of Nonlinear Autonomous Dynamical Systems},
  volume = {20},
  ISSN = {1536-0040},
  url = {http://dx.doi.org/10.1137/20M1335935},
  DOI = {10.1137/20m1335935},
  number = {4},
  journal = {SIAM Journal on Applied Dynamical Systems},
  publisher = {Society for Industrial & Applied Mathematics (SIAM)},
  author = {Susuki,  Yoshihiko and Mauroy,  Alexandre and Mezić,  Igor},
  year = {2021},
  month = jan,
  pages = {2013–2036}
}

@article{Padovan2020,
  title = {Analysis of amplification mechanisms and cross-frequency interactions in nonlinear flows via the harmonic resolvent},
  volume = {900},
  ISSN = {1469-7645},
  url = {http://dx.doi.org/10.1017/jfm.2020.497},
  DOI = {10.1017/jfm.2020.497},
  journal = {Journal of Fluid Mechanics},
  publisher = {Cambridge University Press (CUP)},
  author = {Padovan,  Alberto and Otto,  Samuel E. and Rowley,  Clarence W.},
  year = {2020},
  month = aug 
}

@article{Padovan2022,
  title = {Analysis of the dynamics of subharmonic flow structures via the harmonic resolvent: Application to vortex pairing in an axisymmetric jet},
  volume = {7},
  ISSN = {2469-990X},
  url = {http://dx.doi.org/10.1103/PhysRevFluids.7.073903},
  DOI = {10.1103/physrevfluids.7.073903},
  number = {7},
  journal = {Physical Review Fluids},
  publisher = {American Physical Society (APS)},
  author = {Padovan,  Alberto and Rowley,  Clarence W.},
  year = {2022},
  month = jul 
}

@article{Farghadan2024,
  title = {Efficient harmonic resolvent analysis via time stepping},
  volume = {38},
  ISSN = {1432-2250},
  url = {http://dx.doi.org/10.1007/s00162-024-00694-1},
  DOI = {10.1007/s00162-024-00694-1},
  number = {3},
  journal = {Theoretical and Computational Fluid Dynamics},
  publisher = {Springer Science and Business Media LLC},
  author = {Farghadan,  Ali and Jung,  Junoh and Bhagwat,  Rutvij and Towne,  Aaron},
  year = {2024},
  month = may,
  pages = {331–353}
}

@article{hwang2010linear,
  title={Linear non-normal energy amplification of harmonic and stochastic forcing in turbulent channel flow},
  author={Hwang, Yongyun and Cossu, Carlo},
  journal={Journal of Fluid Mechanics},
  volume={664},
  pages={51--73},
  year={2010},
  publisher={Cambridge University Press},
  doi={10.1017/S0022112010003629}
}

@article{Leclercq2023,
  title = {Mean resolvent operator of a statistically steady flow},
  volume = {968},
  ISSN = {1469-7645},
  url = {http://dx.doi.org/10.1017/jfm.2023.530},
  DOI = {10.1017/jfm.2023.530},
  journal = {Journal of Fluid Mechanics},
  publisher = {Cambridge University Press (CUP)},
  author = {Leclercq,  Colin and Sipp,  Denis},
  year = {2023},
  month = jul 
}

@article{taira2017modal,
  title={Modal analysis of fluid flows: An overview},
  author={Taira, Kunihiko and Hemati, Maziar S. and Brunton, Steven L. and Sun, Yiyang and Duraisamy, Karthik and Bagheri, Shervin and Dawson, Scott T. M. and Yeh, Chi-An},
  journal={AIAA Journal},
  volume={55},
  number={12},
  pages={4013--4041},
  year={2017},
  publisher={American Institute of Aeronautics and Astronautics},
  doi={10.2514/1.J056060}
}

@book{Trefethen2000,
  title = {Spectral Methods in MATLAB},
  ISBN = {9780898719598},
  url = {http://dx.doi.org/10.1137/1.9780898719598},
  DOI = {10.1137/1.9780898719598},
  publisher = {Society for Industrial and Applied Mathematics},
  author = {Trefethen,  Lloyd N.},
  year = {2000},
  month = jan 
}

@article{Bamieh2001,
  title = {Energy amplification in channel flows with stochastic excitation},
  volume = {13},
  ISSN = {1089-7666},
  url = {http://dx.doi.org/10.1063/1.1398044},
  DOI = {10.1063/1.1398044},
  number = {11},
  journal = {Physics of Fluids},
  publisher = {AIP Publishing},
  author = {Bamieh,  Bassam and Dahleh,  Mohammed},
  year = {2001},
  month = nov,
  pages = {3258–3269}
}

@book{boyd2013chebyshev,
  title={Chebyshev and Fourier Spectral Methods: Second Revised Edition},
  author={Boyd, J.P.},
  isbn={9780486141923},
  series={Dover Books on Mathematics},
  url={https://books.google.com/books?id=b4TCAgAAQBAJ},
  year={2013},
  publisher={Dover Publications}
}

@article{Junge2021,
  title = {A New Harmonic Balance Approach Using Multidimensional Time},
  volume = {143},
  ISSN = {1528-8919},
  url = {http://dx.doi.org/10.1115/1.4049698},
  DOI = {10.1115/1.4049698},
  number = {8},
  journal = {Journal of Engineering for Gas Turbines and Power},
  publisher = {ASME International},
  author = {Junge,  Laura and Frey,  Christian and Ashcroft,  Graham and K\"{u}geler,  Edmund},
  year = {2021},
  month = mar 
}

@article{Martini2021,
  title = {Efficient computation of global resolvent modes},
  volume = {919},
  ISSN = {1469-7645},
  url = {http://dx.doi.org/10.1017/jfm.2021.364},
  DOI = {10.1017/jfm.2021.364},
  journal = {Journal of Fluid Mechanics},
  publisher = {Cambridge University Press (CUP)},
  author = {Martini,  Eduardo and Rodríguez,  Daniel and Towne,  Aaron and Cavalieri,  André V.G.},
  year = {2021},
  month = may 
}

@article{Sipp2012,
  title = {Characterization of noise amplifiers with global singular modes: the case of the leading-edge flat-plate boundary layer},
  volume = {27},
  ISSN = {1432-2250},
  url = {http://dx.doi.org/10.1007/s00162-012-0265-y},
  DOI = {10.1007/s00162-012-0265-y},
  number = {5},
  journal = {Theoretical and Computational Fluid Dynamics},
  publisher = {Springer Science and Business Media LLC},
  author = {Sipp,  Denis and Marquet,  Olivier},
  year = {2012},
  month = apr,
  pages = {617–635}
}

@article{Mavriplis2011,
  title = {Time Spectral Method for Periodic and Quasi-Periodic Unsteady Computations on Unstructured Meshes},
  volume = {6},
  ISSN = {1760-6101},
  url = {http://dx.doi.org/10.1051/mmnp/20116309},
  DOI = {10.1051/mmnp/20116309},
  number = {3},
  journal = {Mathematical Modelling of Natural Phenomena},
  publisher = {EDP Sciences},
  author = {Mavriplis,  D. J. and Yang,  Z.},
  year = {2011},
  pages = {213–236}
}

@article{Wirth1997,
  title = {An Extension of Spectral Methods to Quasi-Periodic and Multiscale Problems},
  volume = {132},
  ISSN = {0021-9991},
  url = {http://dx.doi.org/10.1006/jcph.1996.5628},
  DOI = {10.1006/jcph.1996.5628},
  number = {2},
  journal = {Journal of Computational Physics},
  publisher = {Elsevier BV},
  author = {Wirth,  A.},
  year = {1997},
  month = apr,
  pages = {285–290}
}

@inproceedings{Mundis2013,
  title = {Quasi-periodic Time Spectral Method for Aeroelastic Flutter Analysis},
  url = {http://dx.doi.org/10.2514/6.2013-638},
  DOI = {10.2514/6.2013-638},
  booktitle = {51st AIAA Aerospace Sciences Meeting including the New Horizons Forum and Aerospace Exposition},
  publisher = {American Institute of Aeronautics and Astronautics},
  author = {Mundis,  Nathan and Mavriplis,  Dimitri},
  year = {2013},
  month = jan 
}

@article{Herrmann2021,
  title = {Data-driven resolvent analysis},
  volume = {918},
  ISSN = {1469-7645},
  url = {http://dx.doi.org/10.1017/jfm.2021.337},
  DOI = {10.1017/jfm.2021.337},
  journal = {Journal of Fluid Mechanics},
  publisher = {Cambridge University Press (CUP)},
  author = {Herrmann,  Benjamin and Baddoo,  Peter J. and Semaan,  Richard and Brunton,  Steven L. and McKeon,  Beverley J.},
  year = {2021},
  month = may 
}

@inbook{Loiseau2018,
  title = {Time-Stepping and Krylov Methods for Large-Scale Instability Problems},
  ISBN = {9783319914947},
  ISSN = {1871-3033},
  url = {http://dx.doi.org/10.1007/978-3-319-91494-7_2},
  DOI = {10.1007/978-3-319-91494-7_2},
  booktitle = {Computational Modelling of Bifurcations and Instabilities in Fluid Dynamics},
  publisher = {Springer International Publishing},
  author = {Loiseau,  J.-Ch. and Bucci,  M. A. and Cherubini,  S. and Robinet,  J.-Ch.},
  year = {2018},
  month = jul,
  pages = {33–73}
}

@article{MONOKROUSOS2010,
  title = {Global three-dimensional optimal disturbances in the Blasius boundary-layer flow using time-steppers},
  volume = {650},
  ISSN = {1469-7645},
  url = {http://dx.doi.org/10.1017/S0022112009993703},
  DOI = {10.1017/s0022112009993703},
  journal = {Journal of Fluid Mechanics},
  publisher = {Cambridge University Press (CUP)},
  author = {Monokrousos, Antonios and {\AA}kervik, Espen and Brandt, Luca and Henningson, Dan S.},
  year = {2010},
  month = mar,
  pages = {181–214}
}

@article{Bagheri2009,
  title = {Matrix-Free Methods for the Stability and Control of Boundary Layers},
  volume = {47},
  ISSN = {1533-385X},
  url = {http://dx.doi.org/10.2514/1.41365},
  DOI = {10.2514/1.41365},
  number = {5},
  journal = {AIAA Journal},
  publisher = {American Institute of Aeronautics and Astronautics (AIAA)},
  author = {Bagheri,  Shervin and Åkervik,  Espen and Brandt,  Luca and Henningson,  Dan S.},
  year = {2009},
  month = may,
  pages = {1057–1068}
}

@book{Schmid2001,
  title = {Stability and Transition in Shear Flows},
  ISBN = {9781461301851},
  ISSN = {0066-5452},
  url = {http://dx.doi.org/10.1007/978-1-4613-0185-1},
  DOI = {10.1007/978-1-4613-0185-1},
  journal = {Applied Mathematical Sciences},
  publisher = {Springer New York},
  author = {Schmid,  Peter J. and Henningson,  Dan S.},
  year = {2001}
}

@article{Halko2011,
  title     = {{Finding Structure with Randomness: Probabilistic Algorithms for Constructing Approximate Matrix Decompositions}},
  volume    = {53},
  issn      = {1095-7200},
  doi       = {10.1137/090771806},
  number    = {2},
  journal   = {SIAM Review},
  publisher = {Society for Industrial & Applied Mathematics (SIAM)},
  author    = {Halko,  N. and Martinsson,  P. G. and Tropp,  J. A.},
  year      = {2011},
  month     = jan,
  pages     = {217–288}
}

@article{Kanchi2025,
  title = {Differentiable singular value decomposition (SVD)},
  volume = {237},
  ISSN = {0888-3270},
  url = {http://dx.doi.org/10.1016/j.ymssp.2025.112817},
  DOI = {10.1016/j.ymssp.2025.112817},
  journal = {Mechanical Systems and Signal Processing},
  publisher = {Elsevier BV},
  author = {Kanchi,  Rohit Sunil and He,  Sicheng},
  year = {2025},
  month = August,
  pages = {112817}
}

@article{McKeon2010,
  author  = {Beverley J. McKeon and Adrian S. Sharma},
  title   = {A critical-layer framework for turbulent pipe flow},
  journal = {Journal of Fluid Mechanics},
  volume  = {658},
  pages   = {336--382},
  year    = {2010},
  doi     = {10.1017/S002211201000176X}
}

@article{Kojima2019,
  title     = {Resolvent analysis on the origin of two-dimensional transonic buffet},
  volume    = {885},
  issn      = {1469-7645},
  doi       = {10.1017/jfm.2019.992},
  journal   = {Journal of Fluid Mechanics},
  publisher = {Cambridge University Press (CUP)},
  author    = {Kojima,  Yoimi and Yeh,  Chi-An and Taira,  Kunihiko and Kameda,  Masaharu},
  year      = {2019},
  month     = dec
}

@article{JOVANOVI2005,
  title     = {Componentwise energy amplification in channel flows},
  volume    = {534},
  issn      = {1469-7645},
  doi       = {10.1017/s0022112005004295},
  journal   = {Journal of Fluid Mechanics},
  publisher = {Cambridge University Press (CUP)},
  author    = {Jovanović,  Mihailo r. and Bamieh,  Bassam},
  year      = {2005},
  month     = {June},
  pages     = {145–183}
}

@article{Trefethen1993,
  title     = {Hydrodynamic Stability Without Eigenvalues},
  volume    = {261},
  issn      = {1095-9203},
  doi       = {10.1126/science.261.5121.578},
  number    = {5121},
  journal   = {Science},
  publisher = {American Association for the Advancement of Science (AAAS)},
  author    = {Trefethen,  Lloyd N. and Trefethen,  Anne E. and Reddy,  Satish C. and Driscoll,  Tobin A.},
  year      = {1993},
  month     = {July},
  pages     = {578–584}
}

\appendix

\section{Neutral and Adjoint Mode Computation}
\label{app:modes}

For autonomous limit cycle systems, the transverse resolvent operator is defined as
\begin{equation}
\mb{R}_{\Sigma_\text{TS}}(\omega_f) \coloneqq \mb{P}_\text{TS} \mb{L}_\text{TS}^{-1} \mb{P}_\text{TS} \mb{B}_\text{TS},
\end{equation}
where the oblique projector is $\mb{P}_\text{TS} = \mb{I} - \tilde{\mb{p}}_0 \tilde{\mb{q}}_0^*$.
This operator requires the neutral Floquet mode $\tilde{\mb{p}}_0$ and its adjoint $\tilde{\mb{q}}_0$.
This appendix section details how these modes are computed in the time-spectral discretization and provides a proof of the shifted eigenpairs used in Section~\ref{sec:singularity_jacobian}.

\subsection{Neutral Mode Computation}

The neutral mode is the orbit-tangent direction of the base flow.
It is defined as
\begin{equation}
\tilde{\mb{p}}_0(t) \coloneqq \frac{\mathrm{d}\overbar{\mb{w}}}{\mathrm{d}t}.
\end{equation}
In the time-spectral discretization, a first-principles definition of the discrete neutral mode is as a right null vector of the collocation Jacobian.
That is, find a nontrivial vector $\tilde{\mb{p}}_0\in\mathbb{C}^{n n_\text{TS}}$ such that
\begin{equation}
\mb{J}_\text{TS}\tilde{\mb{p}}_0 = \mb{0}.
\end{equation}
In practice, when the nullspace is only approximate, $\tilde{\mb{p}}_0$ can be taken as the right singular vector associated with the smallest singular value of $\mb{J}_\text{TS}$.
In the time-spectral discretization, this is computed using the spectral differentiation matrix $\mb{D} \in \mathbb{R}^{n_\text{TS} \times n_\text{TS}}$ as
\begin{equation}
\tilde{\mb{p}}_0 = \omega_0 (\mb{D} \otimes \mb{I}_n) \overbar{\mb{w}},
\end{equation}
where $\overbar{\mb{w}} \in \mathbb{R}^{n_\text{TS} n}$ is the vectorized base flow and $n$ is the state dimension.
This differentiation-based construction is a convenient alternative that is exact when the base flow is temporally resolved and otherwise yields the neutral mode of the time-spectral approximation.
To quantify the discrepancy between the two constructions, define the neutral-mode residual
\begin{equation}
\mb{r}_{p} \coloneqq \mb{J}_\text{TS}\,\tilde{\mb{p}}_0,
\end{equation}
where $\tilde{\mb{p}}_0$ is computed by differentiation.
This residual can be expanded to reveal two distinct error sources.
First define the base-flow collocation residual
\begin{equation}
\mb{e}_{w} \coloneqq \omega_0(\mb{D}\otimes\mb{I}_n)\overbar{\mb{w}} - \mb{r}(\overbar{\mb{w}}),
\end{equation}
which measures how accurately the time-spectral base flow satisfies the nonlinear governing equation.
Using $\mb{J}_\text{TS}=\mb{J}_\text{block}-\omega_0(\mb{D}\otimes\mb{I}_n)$ and $\tilde{\mb{p}}_0=\omega_0(\mb{D}\otimes\mb{I}_n)\overbar{\mb{w}}$, we obtain
\begin{equation}
\begin{aligned}
\mb{r}_{p}
&= \mb{J}_\text{block}\tilde{\mb{p}}_0 - \omega_0(\mb{D}\otimes\mb{I}_n)\tilde{\mb{p}}_0 \\
&= \Big(\mb{J}_\text{block}\tilde{\mb{p}}_0 - \omega_0(\mb{D}\otimes\mb{I}_n)\mb{r}(\overbar{\mb{w}})\Big)
- \omega_0(\mb{D}\otimes\mb{I}_n)\mb{e}_{w}.
\end{aligned}
\end{equation}
The first term represents discrete chain-rule and temporal truncation error, while the second term represents the contribution of the base-flow collocation residual.
When the base flow is temporally resolved and the collocation residual $\mb{e}_{w}$ is small, both contributions vanish and $\tilde{\mb{p}}_0$ computed by differentiation approaches an exact null vector of $\mb{J}_\text{TS}$.
If the base flow is temporally resolved and the collocation equation is satisfied to numerical precision, then $\|\mb{r}_{p}\|_2$ is correspondingly small.
More generally, the normalized residual $\epsilon_{p} \coloneqq \|\mb{r}_{p}\|_2/\|\tilde{\mb{p}}_0\|_2$ provides a practical diagnostic for how accurately the differentiation-based vector lies in the null space of $\mb{J}_\text{TS}$.
When the null space is one-dimensional and $\sigma_2$ denotes the second-smallest singular value of $\mb{J}_\text{TS}$, the distance from $\tilde{\mb{p}}_0$ to the closest exact null vector is bounded by $\|\mb{r}_{p}\|_2/\sigma_2$.
The neutral mode is then normalized to unit norm $\tilde{\mb{p}}_0 \gets \tilde{\mb{p}}_0 / \|\tilde{\mb{p}}_0\|_2$.

\subsection{Shifted Eigenpairs of the Time-Spectral Jacobian}
\label{app:shifted_eigenpairs}

This subsection provides a proof of the shifted eigenpairs used in Section~\ref{sec:singularity_jacobian}.
Assume the neutral mode satisfies $\mathcal{J}_\text{TS}\tilde{\mb{p}}_0 = 0$ with the operator definition $\mathcal{J}_\text{TS}\mb{y} \coloneqq \mb{J}(t)\mb{y} - \omega_0\,\d\mb{y}/\d\theta$ and phase variable $\theta = \omega_0 t$.
For any integer $k$, define the shifted mode $\mb{y}_k(\theta) \coloneqq \tilde{\mb{p}}_0(\theta)e^{-jk\theta}$.
Applying the product rule gives
\begin{equation}
\frac{\d\mb{y}_k}{\d\theta} = \left(\frac{\d\tilde{\mb{p}}_0}{\d\theta}\right)e^{-jk\theta} - jk\,\tilde{\mb{p}}_0 e^{-jk\theta}.
\end{equation}
Substituting into the operator yields
\begin{equation}
\mathcal{J}_\text{TS}\mb{y}_k
= \left(\mb{J}(t)\tilde{\mb{p}}_0 - \omega_0\frac{\d\tilde{\mb{p}}_0}{\d\theta}\right)e^{-jk\theta}
+ jk\omega_0\,\tilde{\mb{p}}_0e^{-jk\theta}.
\end{equation}
The first parenthesis is $\mathcal{J}_\text{TS}\tilde{\mb{p}}_0 = 0$ by assumption, so $\mathcal{J}_\text{TS}\mb{y}_k = (jk\omega_0)\mb{y}_k$.
We next show the corresponding statement for the collocation matrix $\mb{J}_\text{TS}$.
Let $\mb{D}\in\mathbb{C}^{n_\text{TS}\times n_\text{TS}}$ be the Fourier spectral differentiation matrix (Section~\ref{sec:ts_method}) and let $\mathcal{F}\in\mathbb{C}^{n_\text{TS}\times n_\text{TS}}$ be the unitary DFT matrix.
Then $\mb{D}$ admits the diagonalization $\mb{D} = \mathcal{F}^{-1}\mb{\Lambda}\mathcal{F}$, where $\mb{\Lambda}=\mathrm{diag}(jk)$ contains the unique DFT wavenumbers $k\in\{-(n_\text{TS}-1)/2,\ldots,(n_\text{TS}-1)/2\}$.
Define the modulation matrix $\mb{S}_k \coloneqq \mathrm{diag}(e^{-jk\theta_0},\ldots,e^{-jk\theta_{n_\text{TS}-1}})$ and the lifted modulation $\mb{E}_k \coloneqq \mb{S}_k\otimes \mb{I}_n$ acting on stacked vectors in $\mathbb{C}^{n n_\text{TS}}$.
In Fourier space, multiplication by $e^{-jk\theta}$ corresponds to an index shift of Fourier coefficients.
Equivalently, one can verify that $\mb{S}_k = \mathcal{F}^{-1}\mb{\Pi}_k\mathcal{F}$ where $\mb{\Pi}_k$ is the permutation matrix that shifts the DFT modes by $k$ (with wrap-around).
Using this representation,
\begin{equation}
\mb{S}_k^{-1}\mb{D}\mb{S}_k
= \mathcal{F}^{-1}\mb{\Pi}_k^{-1}\mb{\Lambda}\mb{\Pi}_k\mathcal{F}
= \mathcal{F}^{-1}(\mb{\Lambda}+jk\,\mb{I})\mathcal{F}
= \mb{D}+jk\,\mb{I}.
\end{equation}
Since $\mb{J}_\text{block}$ is block-diagonal in time, it commutes with $\mb{E}_k$.
Therefore, for the collocation matrix $\mb{J}_\text{TS} = \mb{J}_\text{block}-\omega_0(\mb{D}\otimes\mb{I}_n)$ we obtain the similarity relation
\begin{equation}
\mb{E}_k^{-1}\mb{J}_\text{TS}\mb{E}_k
= \mb{J}_\text{TS} - jk\omega_0\,\mb{I}.
\end{equation}
If $\mb{J}_\text{TS}\tilde{\mb{p}}_0=\mb{0}$, then multiplying by $\mb{E}_k$ and using the identity above yields $\mb{J}_\text{TS}(\mb{E}_k\tilde{\mb{p}}_0) = (jk\omega_0)(\mb{E}_k\tilde{\mb{p}}_0)$.
This proves that $jk\omega_0$ is an eigenvalue of $\mb{J}_\text{TS}$ for each unique DFT mode $|k|\leq (n_\text{TS}-1)/2$.
For $|k|>(n_\text{TS}-1)/2$, the modulation aliases to an equivalent DFT mode because $e^{-j(k+n_\text{TS})\theta_j}=e^{-jk\theta_j}$ on the collocation grid.

\subsection{Adjoint Mode Computation}

The adjoint mode $\tilde{\mb{q}}_0$ is the left eigenvector of $\mb{J}_\text{TS}$ corresponding to the zero eigenvalue, satisfying $\mb{J}_\text{TS}^* \tilde{\mb{q}}_0 = \mb{0}$.
Direct computation via eigen decomposition is computationally prohibitive for large scale system.
Furthermore, numerical errors in the base flow can result in a non-zero eigenvalue satisfying $\mb{J}_\text{TS}^* \tilde{\mb{q}}_0 \ll \mb{1}$, making eigenvalue-based extraction unreliable.

To address both issues, we follow the optimization approach of~\citet{Padovan2022} in which $\tilde{\mb{q}}_0$ is the solution to the optimization problem
\begin{equation}
\begin{aligned}
\tilde{\mb{q}}_0 &= \arg \min_{\mb{q}} \|\mb{J}_\text{TS}^*\mb{q}\|^2 \\
& \quad \text{subject to } \langle \mb{q}, \tilde{\mb{p}}_0 \rangle = 1,
\end{aligned}
\end{equation}
which computes the adjoint mode $\tilde{\mb{q}}_0$ as the vector closest to the null space of $\mb{J}_\text{TS}^*$.
Using a Lagrange multiplier $\lambda$ for the constraint, the first-order optimality conditions yield the augmented system
\begin{equation}
\label{eq:adjoint_augmented}
\begin{bmatrix}
\mb{J}_\text{TS} \mb{J}_\text{TS}^* & -\tilde{\mb{p}}_0 \\
\tilde{\mb{p}}_0^* & 0
\end{bmatrix}
\begin{bmatrix}
\tilde{\mb{q}}_0 \\
\lambda
\end{bmatrix}
=
\begin{bmatrix}
\mb{0} \\
1
\end{bmatrix}.
\end{equation}
It can be shown that when $\mb{J}_\text{TS}$ is exactly singular the solution $\tilde{\mb{q}}_0$ is the exact left null vector of $\mb{J}_\text{TS}$ by identifying that $\lambda = 0$ in the first equation which leads to $\mb{J}_\text{TS}\mb{J}^*_\text{TS}\tilde{\mb{q}}_0 = \mb{0}$.

For large systems with a sparse $\mb{J}_\text{TS}$, explicitly forming $\mb{J}_\text{TS} \mb{J}_\text{TS}^*$ results in unacceptable fill-in and memory usage.
Instead, we define $\mb{J}_\text{TS} \mb{J}_\text{TS}^*$ as a linear operator through its action on a vector $\mb{x}$
\begin{equation}
(\mb{J}_\text{TS} \mb{J}_\text{TS}^*) \mb{x} = \mb{J}_\text{TS} (\mb{J}_\text{TS}^* \mb{x}),
\end{equation}
which requires only two sparse matrix-vector products.
We solve the augmented system~\eqref{eq:adjoint_augmented} using GMRES~\cite{Saad1986} with restarts, avoiding explicit matrix formulation.

\section{Efficient Computation of the Transverse Resolvent Operator}
\label{app:efficient_resolvent}

For large-scale systems, we avoid explicitly forming the projection matrix and inverting $\mb{L}_\text{TS}$ by instead applying the projection implicitly through vector-vector products and solving $\mb{L}_\text{TS}$ iteratively with a preconditioned GMRES solver.
To compute the resolvent gain as well as the optimal forcing and response modes, we use the randomized SVD algorithm detailed in~\citet[Algorithm 5.1]{Halko2011}.
This appendix section details the required forward and adjoint linear solves and the implicit application of the oblique projector.

\subsection{Implicit Oblique Projection}

The randomized SVD algorithm~\citet[Algorithm 5.1]{Halko2011} requires solving forward and adjoint systems of the form
\begin{equation}
\mb{L}_\text{TS}\tilde{\boldsymbol{\eta}}_\Sigma = \tilde{\mb{f}}_\Sigma, \quad \mb{L}_\text{TS}^*\tilde{\boldsymbol{\zeta}}_{\Sigma^*} = \tilde{\mb{g}}_{\Sigma^*} ,
\label{eq:forward_adjoint}
\end{equation}
where $\tilde{\boldsymbol{\eta}}_\Sigma \in \Sigma$, $\tilde{\tilde{\mb{f}}}_\Sigma \in \Sigma$, $\tilde{\boldsymbol{\zeta}}_{\Sigma^*} \in \Sigma^*$, and $\tilde{\mb{g}}_{\Sigma^*} \in \Sigma^*$, where $\Sigma^* = \mathrm{Range}(\mb{P}_k^*)$.
Following the approach by~\citet{Padovan2022}, we do not explicitly form the dense matrix $\mb{P}_\omega$; rather we implicitly apply the projection through vector-vector products as
\begin{equation}
\label{eq:implicit_projection}
\mb{P}_\omega \mb{x} = \mb{x} - \tilde{\mb{p}}_\omega \langle \tilde{\mb{q}}_\omega, \mb{x} \rangle, \quad \mb{P}_\omega^* \mb{x} = \mb{x} - \tilde{\mb{q}}_\omega \langle \tilde{\mb{p}}_\omega, \mb{x} \rangle
\end{equation}
for both the forward and adjoint projections on any vector $\mb{x}$.
This approach has the advantage of never computing the dense product $\mb{L}_\text{TS}\mb{P}_\omega$.
Instead the forward and adjoint systems are solved using only $\mb{L}_\text{TS}$ which is a sparse matrix.

The forward projected TSR operator action (Eq.~\ref{eq:forward_adjoint}) on a vector $\tilde{\mb{f}}$ can be computed in the following three steps.
1. Project the right hand side $\tilde{\mb{f}}_{\Sigma} = \mb{P}_\omega\tilde{\mb{f}}$.
In the context of the randomized SVD algorithm, $\tilde{\mb{f}}$ could be a random Gaussian vector.
2. Solve the linear system $\mb{L}_\text{TS}\tilde{\boldsymbol{\eta}} = \tilde{\mb{f}}_{\Sigma}$ for $\tilde{\boldsymbol{\eta}}$.
For this solve, we use the preconditioned GMRES solver descried next in~\ref{sec:fd_preconditioner}.
3. Project the solution $\tilde{\boldsymbol{\eta}}_{\Sigma} = \mb{P}_\omega\tilde{\boldsymbol{\eta}}$.
As mentioned in~\citet{Padovan2022}, this step is not absolutely necessary as $\Sigma_\text{TS}$ is an invariant subspace; however we preform it to ensure machine precision.
The adjoint projected TSR action can be computed in the same manner, substituting the matrices $\mb{P}_\omega$ and $\mb{L}_\text{TS}$ for their adjoint matrices.

\subsection{Finite Difference Preconditioned GMRES}
\label{sec:fd_preconditioner}

The forward and adjoint linear systems are solved using GMRES with restarts:
\begin{equation}
\mb{L}_\text{TS} \tilde{\boldsymbol{\eta}} = \tilde{\mb{f}}_{\Sigma}, \quad
\mb{L}_\text{TS}^* \boldsymbol{\zeta} = \mb{g}.
\end{equation}
To accelerate convergence, we precondition the system using a sparse approximation of $\mb{L}_\text{TS}$.
The time-spectral Jacobian has the structure
\begin{equation}
\mb{J}_\text{TS} = \mb{J}_\text{blk} - \omega_0 (\mb{D} \otimes \mb{I}_n),
\end{equation}
where $\mb{J}_\text{blk} = \mathrm{blkdiag}(\mb{J}(\theta_1), \ldots, \mb{J}(\theta_{n_\text{TS}}))$ is the block-diagonal spatial Jacobian and $\mb{D}$ is the dense Fourier spectral differentiation matrix.
The Kronecker product $\mb{D} \otimes \mb{I}_n$ which couples all time instances forms dense patterns inside $\mb{L}_\text{TS}$.

The preconditioner matrix $\mb{M}$ replaces the Fourier spectral matrix $\mb{D}$ with a sixth-order periodic finite difference approximation $\mb{D}_\text{FD}$, yielding
\begin{equation}
\mb{M} = j\omega_f \mb{I} - \mb{J}_\text{blk} + \omega_0 (\mb{D}_\text{FD} \otimes \mb{I}_n).
\end{equation}
Since $\mb{D}_\text{FD}$ is a banded matrix with $\mathcal{O}(1)$ nonzeros per row, the preconditioner $\mb{M}$ inherits the sparsity of $\mb{J}_\text{blk}$ and can be factored once via sparse LU decomposition.
The factored preconditioner is reused across all GMRES solves at a given forcing frequency, amortizing the factorization cost.

\section{Invariance and Bijectivity of $\mb{L}_\text{TS}$ in $\Sigma_\text{TS}$}
\label{app:invariance_bijectivity}

\subsection{Invariance of $\mb{L}_\text{TS}$}

To establish the well-posedness of the transverse resolvent, we must prove that the operator $\mathbf{L}_\text{TS}$ is both invariant and bijective when restricted to the subspace $\Sigma_\text{TS}$ at resonance.
We consider the resonant case where $\omega_f = k\omega_0$ and the null space of $\mathbf{L}_\text{TS}$ is one-dimensional and spanned by the modulated neutral mode $\tilde{\mathbf{p}}_k$.
The subspace $\Sigma_\text{TS}$ is defined as the range of the oblique projector $\mathbf{P}_{\omega} = \mathbf{I} - \tilde{\mathbf{p}}_{\omega}\tilde{\mathbf{q}}_{\omega}^*$.

First, we prove that $\Sigma_\text{TS}$ is invariant under $\mathbf{L}_\text{TS}$ by testing an arbitrary vector $\tilde{\mathbf{v}} \in \Sigma_\text{TS}$, which implies $\langle \tilde{\mathbf{q}}_k, \tilde{\mathbf{v}} \rangle = 0$.
We verify that the product $\mathbf{L}_\text{TS}\tilde{\mathbf{v}}$ remains orthogonal to $\tilde{\mathbf{q}}_k$ by taking the inner product
\begin{equation}
\langle \tilde{\mathbf{q}}_k, \mathbf{L}_\text{TS}\tilde{\mathbf{v}} \rangle = \tilde{\mathbf{q}}_k^* (j\omega_f \mathbf{I} - \mathbf{J}_\text{TS})\tilde{\mathbf{v}}.
\end{equation}
Using the property $\langle \tilde{\mathbf{q}}_k, \mathbf{J}_\text{TS}\tilde{\mathbf{v}} \rangle = \langle \mathbf{J}_\text{TS}^* \tilde{\mathbf{q}}_k, \tilde{\mathbf{v}} \rangle$ and the modulated adjoint eigenpair $\mathbf{J}_\text{TS}^* \tilde{\mathbf{q}}_k = -j\omega_f \tilde{\mathbf{q}}_k$, we derive
\begin{equation}
\langle \tilde{\mathbf{q}}_k, \mathbf{L}_\text{TS}\tilde{\mathbf{v}} \rangle = j\omega_f \langle \tilde{\mathbf{q}}_k, \tilde{\mathbf{v}} \rangle - \langle \mathbf{J}_\text{TS}^* \tilde{\mathbf{q}}_k, \tilde{\mathbf{v}} \rangle.
\end{equation}
Substituting the eigenvalue relation in gives
\begin{equation}
\langle \tilde{\mathbf{q}}_k, \mathbf{L}_\text{TS}\tilde{\mathbf{v}} \rangle = j\omega_f \langle \tilde{\mathbf{q}}_k, \tilde{\mathbf{v}} \rangle - (j\omega_f) \langle \tilde{\mathbf{q}}_k, \tilde{\mathbf{v}} \rangle = 0.
\end{equation}
Thus, for any $\tilde{\mathbf{v}} \in \Sigma_\text{TS}$, the resulting vector $\mathbf{L}_\text{TS}\tilde{\mathbf{v}}$ remains in $\Sigma_\text{TS}$, confirming invariance.

\subsection{Bijectivity of $\mb{L}_\text{TS}$}

Next, we establish bijectivity by showing that a unique solution $\tilde{\boldsymbol{\eta}}_{\Sigma} \in \Sigma_\text{TS}$ exists for any forcing $\tilde{\mathbf{f}}_{\Sigma} \in \Sigma_\text{TS}$.
Existence is guaranteed by the Fredholm alternative, which states that a solution exists if and only if the forcing is orthogonal to the null space of the adjoint operator $\mathbf{L}_\text{TS}^*$.
Since $\tilde{\mathbf{f}}_{\Sigma} \in \Sigma_\text{TS}$ is orthogonal to the adjoint null vector $\tilde{\mathbf{q}}_k$, a solution $\tilde{\boldsymbol{\eta}}$ is guaranteed to exist.
The general solution is $\tilde{\boldsymbol{\eta}} = \tilde{\boldsymbol{\eta}}_0 + c\tilde{\mathbf{p}}_k$ for an arbitrary constant $c$.
Uniqueness is achieved by requiring the solution to reside in $\Sigma_\text{TS}$, such that $\langle \tilde{\mathbf{q}}_k, \tilde{\boldsymbol{\eta}}_0 + c\tilde{\mathbf{p}}_k \rangle = 0$.
Solving for the constant $c = -\langle \tilde{\mathbf{q}}_k, \tilde{\boldsymbol{\eta}}_0 \rangle / \langle \tilde{\mathbf{q}}_k, \tilde{\mathbf{p}}_k \rangle$ yields a unique transverse response.
Therefore, the restricted operator is bijective and invertible within the subspace $\Sigma_\text{TS}$.

\section{Derivation of the Phase Drift Rate}
\label{app:phase_drift_derivation}

To derive the evolution equation for the phase perturbation magnitude $c(t)$, we begin with the forced linear time-periodic system given in Eq~\ref{eq:linear_dynamics}.
We decompose the physical response into a phase-shift component and a transverse component, $\boldsymbol{\eta}(t) = c(t)\tilde{\mathbf{p}}_0(t) + \mathbf{v}(t)$, where $\tilde{\mathbf{p}}_0(t)$ is the neutral Floquet mode representing the orbit tangent direction.
Substituting this decomposition into the linearized governing equation yields
\begin{equation}
\frac{d}{dt}\left(c(t)\tilde{\mathbf{p}}_0(t) + \mathbf{v}(t)\right) = \mathbf{J}(t)\left(c(t)\tilde{\mathbf{p}}_0(t) + \mathbf{v}(t)\right) + \mathbf{f}(t).
\end{equation}
Applying the product rule to the left-hand side results in
\begin{equation}
\dot{c}(t)\tilde{\mathbf{p}}_0(t) + c(t)\dot{\tilde{\mathbf{p}}}_0(t) + \dot{\mathbf{v}}(t) = c(t)\mathbf{J}(t)\tilde{\mathbf{p}}_0(t) + \mathbf{J}(t)\mathbf{v}(t) + \mathbf{f}(t).
\end{equation}
Since the neutral mode satisfies the unforced linearized dynamics $\dot{\tilde{\mathbf{p}}}_0(t) = \mathbf{J}(t)\tilde{\mathbf{p}}_0(t)$, the terms involving $c(t)$ cancel exactly
\begin{equation}
\dot{c}(t)\tilde{\mathbf{p}}_0(t) + \dot{\mathbf{v}}(t) = \mathbf{J}(t)\mathbf{v}(t) + \mathbf{f}(t).
\end{equation}
To isolate the scalar drift rate $\dot{c}(t)$, we project the entire equation onto the adjoint neutral mode $\tilde{\mathbf{q}}_0(t)$ by left-multiplying by $\tilde{\mathbf{q}}_0^*(t)$
\begin{equation}
\tilde{\mathbf{q}}_0^*(t)\tilde{\mathbf{p}}_0(t)\dot{c}(t) + \tilde{\mathbf{q}}_0^*(t)\dot{\mathbf{v}}(t) = \tilde{\mathbf{q}}_0^*(t)\mathbf{J}(t)\mathbf{v}(t) + \tilde{\mathbf{q}}_0^*(t)\mathbf{f}(t).
\end{equation}
Using the normalization $\tilde{\mathbf{q}}_0^*(t)\tilde{\mathbf{p}}_0(t) = 1$, we rearrange the equation as
\begin{equation}
\dot{c}(t) + \left(\tilde{\mathbf{q}}_0^*(t)\dot{\mathbf{v}}(t) - \tilde{\mathbf{q}}_0^*(t)\mathbf{J}(t)\mathbf{v}(t)\right) = \tilde{\mathbf{q}}_0^*(t)\mathbf{f}(t).
\end{equation}
The adjoint mode is mathematically guaranteed to be orthogonal to the transverse dynamics at all times, such that $\tilde{\mathbf{q}}_0^*(t)\mathbf{v}(t) = 0$.
Differentiating this orthogonality condition with respect to time gives $\dot{\tilde{\mathbf{q}}}_0^*(t)\mathbf{v}(t) + \tilde{\mathbf{q}}_0^*(t)\dot{\mathbf{v}}(t) = 0$, which allows us to substitute $\tilde{\mathbf{q}}_0^*(t)\dot{\mathbf{v}}(t) = -\dot{\tilde{\mathbf{q}}}_0^*(t)\mathbf{v}(t)$
\begin{equation}
\dot{c}(t) - \left(\dot{\tilde{\mathbf{q}}}_0^*(t) + \tilde{\mathbf{q}}_0^*(t)\mathbf{J}(t)\right)\mathbf{v}(t) = \tilde{\mathbf{q}}_0^*(t)\mathbf{f}(t).
\end{equation}
The term in the parentheses vanishes because the adjoint mode satisfies the adjoint linear dynamics $\dot{\tilde{\mathbf{q}}}_0(t) + \mathbf{J}^*(t)\tilde{\mathbf{q}}_0(t) = 0$.
Finally, we obtain the exact expression for the instantaneous phase drift rate
\begin{equation}
\dot{c}(t) = \tilde{\mathbf{q}}_0^*(t)\mathbf{f}(t),
\end{equation}
which is used to show that a forcing orthogonal to the modulated adjoint mode produces a response with zero phase drift in~\ref{sec:physical_response}.

\section{Derivation of Quasi-Periodic Response}
\label{app:quasi_periodic}

This appendix derives the quasi-periodic response Eq.~\ref{eq:periodic_response} for a linear time-periodic (LTP) system subject to quasi-periodic forcing with base $\omega_0$ and forcing $\omega_f$ frequency components.
Consider the forced LTP system
\begin{equation}
\dot{\boldsymbol{\eta}}(t) = \mb{J}(t)\boldsymbol{\eta}(t) + \mb{B}{\mb{f}}(t),
\end{equation}
where $\mb{J}(t) = \mb{J}(t + T_0)$ is the $T_0$-periodic Jacobian with base frequency $\omega_0 = 2\pi/T_0$.

Since $\mb{J}(t)$ is periodic, it can be expanded with a Fourier series as
\begin{equation}
\mb{J}(t) = \sum_{\ell=-\infty}^{\infty} \hat{\mb{J}}_\ell e^{j\ell\omega_0 t},
\end{equation}
where $\hat{\mb{J}}_\ell$ are the Fourier coefficients.

We seek a response $\boldsymbol{\eta}(t)$ to the harmonic forcing input.
Due to the periodic modulation of the Jacobian, the response is potentially quasi-periodic for forcing frequencies which are incommensurate with the base frequency.
The ansatz takes the following form
\begin{equation}
\boldsymbol{\eta}(t) = \sum_{p=-\infty}^{\infty} \hat{\boldsymbol{\eta}}_p e^{j(\omega_f + p\omega_0)t},
\label{eq:app_eta_ansatz}
\end{equation}
where $\hat{\boldsymbol{\eta}}_p \in \mathbb{C}^n$ are the Fourier coefficients of the response.
This represents a quasi-periodic signal with frequency content at the forcing frequency $\omega_f$ plus all integer multiples of the base frequency $\omega_0$.

Substituting this ansatz into the governing equation, we first compute the term $\mb{J}(t)\boldsymbol{\eta}(t)$ on the right hand side.
This gives the multiplication of two infinite series
\begin{equation}
\begin{aligned}
\mb{J}(t)\boldsymbol{\eta}(t) &= \left(\sum_{\ell=-\infty}^{\infty} \hat{\mb{J}}_\ell e^{j\ell\omega_0 t}\right)\left(\sum_{p=-\infty}^{\infty} \hat{\boldsymbol{\eta}}_p e^{j(\omega_f + p\omega_0)t}\right) \\
&= \sum_{\ell=-\infty}^{\infty}\sum_{p=-\infty}^{\infty} \hat{\mb{J}}_\ell \hat{\boldsymbol{\eta}}_p e^{j(\omega_f + (\ell+p)\omega_0)t}.
\end{aligned}
\end{equation}
To balance the harmonics, we group terms by their net frequency.
We define the index $k = \ell + p$ ($p = k - \ell$) to represent the harmonic shift.
Substituting this summation gives
\begin{equation}
\mb{J}(t)\boldsymbol{\eta}(t) = \sum_{k=-\infty}^{\infty} \left(\sum_{\ell=-\infty}^{\infty} \hat{\mb{J}}_\ell \hat{\boldsymbol{\eta}}_{k-\ell}\right) e^{j(\omega_f + k\omega_0)t},
\end{equation}
which is the complete form of the product $\mb{J}(t)\boldsymbol{\eta}(t)$.

We now consider the time derivative on the left hand side $\dot{\boldsymbol{\eta}}(t)$.
Differentiating the ansatz gives
\begin{equation}
\dot{\boldsymbol{\eta}}(t) = \sum_{k=-\infty}^{\infty} j(\omega_f + k\omega_0)\hat{\boldsymbol{\eta}}_k e^{j(\omega_f + k\omega_0)t},
\end{equation}
using index $k$ to match the right hand side.

Expanding the quasi-periodic forcing term by its Fourier coefficients gives
\begin{equation}
\mb{B}{\mb{f}}(t) = \sum_{k=-\infty}^\infty \mb{B} \hat{\mb{f}}_k e^{j(\omega_f + k\omega_0)t}.
\end{equation}
Using the harmonic balance technique, the coefficients of the exponential $e^{j(\omega_f + k\omega_0)t}$ in each term must satisfy the equation independently for each $k$.
Equating the coefficients by canceling out the exponential on each term gives
\begin{equation}
j(\omega_f + k\omega_0)\hat{\boldsymbol{\eta}}_k = \sum_{\ell=-\infty}^{\infty} \hat{\mb{J}}_\ell \hat{\boldsymbol{\eta}}_{k-\ell} + \mb{B}\hat{\mb{f}}_k,
\end{equation}
which is identical to the standard harmonic balance equation.

This infinite system of coupled algebraic equations demonstrates that quasi-periodic forcing with spectral content of both the forcing and base frequencies excites responses at all sideband frequencies $\omega_f + k\omega_0$ through the coupling induced by the time-periodic Jacobian.
The off-diagonal Fourier coefficients $\hat{\mb{J}}_\ell$ ($\ell \neq 0$) transfer energy between harmonics, while the mean Jacobian $\hat{\mb{J}}_0$ acts independently at each frequency.
Rearranging into matrix form gives the block-Toeplitz operator $\mb{L}_{\text{HR}}(\omega_f)$ in Eq.~\ref{eq:L_operator_matrix}.

\section{Harmonic Forcing Input Matrix}
\label{app:parseval}

This appendix describes the construction of the time-spectral input matrix $\mb{B}_\text{TS}$ for the special case where it is desired to restrict the forcing to be purely harmonic $\tilde{\mb{f}}(t) = \hat{\mb{f}}e^{j\omega_f t}$, where $\hat{\mb{f}}$ is constant.
The resolvent gain is physically defined as the maximum amplification over the long-time-averaged $L^2$ norms.
Due to the difference in size of the quasi periodic response and harmonic forcing vectors, Parseval scaling is needed to ensure the singular values of the discrete TSR operator match the physical gain.

Restricting the forcing envelope to be constant ($\hat{\mb{f}}e^{j\omega_f t} \in \mathbb{R}^m$) requires the appropriately sized input matrix $\mb{B}_\text{TS} = (1 / \sqrt{n_\text{TS}}) (\mb{1}_{n_\text{TS}} \otimes \mb{B}) \in \mathbb{R}^{n_\text{TS}n\times m}$ which implicitly forces the optimal forcing envelope to be constant.

The Parseval $1 / \sqrt{n_\text{TS}}$ scaling comes from the size difference in the forcing and response vectors as the discrete response vector is $\tilde{\boldsymbol{\eta}} \in \mathbb{C}^{N\cdot n_\text{TS}}$.
This section derives the $1 / \sqrt{n_\text{TS}}$ scaling required in the input matrix $\mb{B}_\text{TS}$ to ensure the discrete singular values correspond to physical energy amplification.

For the quasi-periodic response $\boldsymbol{\eta}(t) = \hat{\boldsymbol{\eta}}(t)e^{j\omega_f t}$, the physical energy amplification defined by the long-time-averaged $L^2$ norm is
\begin{equation}
||\boldsymbol{\eta}(t)||_{L^2}^2 = \lim_{T\to\infty} \f{1}{T}\int_0^T |\hat{\boldsymbol{\eta}}(t)|^2 \d t = \f{1}{T_0}\int_0^{T_0} |\hat{\boldsymbol{\eta}}(t)|^2 \d t,
\end{equation}
where the second equality follows from the $T_0$-periodicity of the envelope $\hat{\boldsymbol{\eta}}(t)$.
Using the trapezoidal rule (which is spectrally accurate for periodic functions), this integral is approximated by the discrete sum at collocation points $\theta_j$
\begin{equation}
||\boldsymbol{\eta}(t)||_{L^2}^2 \approx \f{1}{n_{\text{TS}}} \sum_{j=0}^{n_{\text{TS}}-1} |\hat{\boldsymbol{\eta}}(\theta_j)|^2 = \f{1}{n_{\text{TS}}} ||\tilde{\boldsymbol{\eta}}||_2^2.
\end{equation}
Taking the square root gives
\begin{equation}
||\boldsymbol{\eta}(t)||_{L^2} \approx \f{1}{\sqrt{n_{\text{TS}}}}||\tilde{\boldsymbol{\eta}}||_2,
\end{equation}
which relates the continuous and discrete norms of the quasi-periodic response.

For the purely harmonic forcing $\mb{f}(t) = \hat{\mb{f}}e^{j\omega_f t}$ with constant amplitude $\hat{\mb{f}}$, the long-time-averaged norm simplifies to
\begin{equation}
||\mb{f}(t)||_{L^2}^2 = \lim_{T\to\infty} \f{1}{T}\int_0^T |\hat{\mb{f}}|^2 \d t = |\hat{\mb{f}}|^2 = ||\hat{\mb{f}}||_2^2,
\end{equation}
since the amplitude is constant in time.

Substituting these expressions into the resolvent gain Eq.~\ref{eq:resolvent_def} gives
\begin{equation}
G = \f{||\boldsymbol{\eta}(t)||_{L^2}}{||\mb{f}(t)||_{L^2}} \approx \f{1}{\sqrt{n_{\text{TS}}}} \f{||\tilde{\boldsymbol{\eta}}||_2}{||\hat{\mb{f}}||_2}.
\end{equation}
To recover this gain directly from $\sigma_{\max}(\mb{R}_{\text{TS}})$, the input matrix must absorb the $1/\sqrt{n_{\text{TS}}}$ scaling factor, leading to the scaled input matrix defined above.
In the case of quasi-periodic forcing (Eq~\ref{eq:input_mat}), the long-time-averaged norm of the forcing has the same scaling as the response $\|\mb{f}(t)\|_{L^2} \approx (1/\sqrt{n_\text{TS}})\|\tilde{\mb{f}}\|_2$.
This leads the the factor canceling out and Parseval scaling of the input matrix is not needed in the more general quasi-periodic case.

\section{Equivalence of TSR and HR Operators}
\label{app:equivalence}

This appendix establishes the equivalence between the time-spectral resolvent (TSR) and harmonic resolvent (HR) operators when using the same number of frequency components.
The time-spectral governing equation Eq.~\ref{eq:ts_governing} and the harmonic balance governing equation Eq.~\ref{eq:hb_governing} are related by the discrete Fourier transform (DFT).
When the DFT is applied to the TSR solution, the result is the harmonic balance Fourier coefficients of the HR operator.
Therefore, when both methods use the same truncation ($n_{\text{TS}} = 2n_{\text{har}} + 1$), they produce identical periodic orbits $\bar{\mb{w}}(t)$ and consequently identical Jacobians $\mb{J}(t)$.

Let $\mb{F} \in \mathbb{C}^{n_{\text{TS}} \times n_{\text{TS}}}$ denote the standard unitary DFT matrix with entries
\begin{equation}
F_{pq} = \frac{1}{\sqrt{n_{\text{TS}}}}e^{-j 2\pi pq / n_{\text{TS}}}, \quad p,q = 0, \ldots, n_{\text{TS}} - 1.
\end{equation}
To apply the DFT to the temporal dimension, the full transformation matrix is given as $\mathcal{F} = \mb{F} \otimes \mb{I}_n$.
This acts as identity to the state dimension.
For odd $n_{\text{TS}} = 2n_{\text{har}} + 1$, the DFT maps between time-domain samples and Fourier coefficients for harmonics $k = -n_{\text{har}}, \ldots, n_{\text{har}}$.

The key observation is that the spectral differentiation matrix $\mb{D}$ is diagonalized by the DFT.
This relationsip is given as
\begin{equation}
\mathcal{F}(\mb{D} \otimes \mb{I}_n)\mathcal{F}^{-1} = \mathrm{diag}(jk\mb{I}_n)_{k=-n_{\text{har}}}^{n_{\text{har}}}.
\end{equation}
This corresponds exactly to the frequency-domain differentiation $\partial_t \leftrightarrow jk\omega_0$.

For the block-diagonal Jacobian term, the DFT converts pointwise multiplication in time to convolution in frequency.
Specifically, if $\mb{J}_{\text{block}} = \mathrm{blkdiag}(\mb{J}_0, \ldots, \mb{J}_{n_{\text{TS}}-1})$ contains the Jacobian at collocation points, then
\begin{equation}
\mathcal{F}\mb{J}_{\text{block}}\mathcal{F}^{-1} = \mb{T}_{n_{\text{har}}},
\end{equation}
where $\mb{T}_{n_{\text{har}}}$ is the block-Toeplitz matrix with blocks $\hat{\mb{J}}_{k-\ell}$ truncated to $|k|, |\ell| \leq n_{\text{har}}$. This is precisely the structure appearing in $\mb{L}_{\text{HR}}$ in Eq.~\ref{eq:L_operator_matrix}.

Combining these results, the TSR and HR operators are related by
\begin{equation}
\mathcal{F}\mb{L}_{\text{TS}}\mathcal{F}^{-1} = \mb{L}_{\text{HR}}^{(n_{\text{har}})}, \quad \mb{R}_{\text{TS}} = \mathcal{F}^{-1}\mb{R}_{\text{HR}}^{(n_{\text{har}})}\mathcal{F}.
\end{equation}
Since $\mathcal{F}$ is unitary, the singular values of the two operators are equal: $\sigma_{\text{TS}} = \sigma^{(n_{\text{har}})}$.
The singular vectors differ only by the DFT transformation, representing the same modes in either time or frequency domain.

This equivalence shows that TSR and HR are dual representations of the same underlying operator, with TSR operating on time-domain samples and HR operating on Fourier coefficients.

\section{Padding and Truncation Error}
\label{app:padding}

To compare $\mb{R}_{\text{HR}}^{(n)}$ with $\mb{R}_{\text{HR}}^{(m)}$ for $m > n$, we embed the smaller operator into the larger space. Define the padded operator $\widetilde{\mb{L}}_{\text{HR}}^{(n\to m)}$ on the $m$-harmonic space by
\begin{equation}
\widetilde{\mb{L}}_{\text{HR}}^{(n\to m)} =
\begin{bmatrix}
\mb{D}_- & \mb{0} & \mb{0} \\
\mb{0} & \mb{L}_{\text{HR}}^{(n)} & \mb{0} \\
\mb{0} & \mb{0} & \mb{D}_+
\end{bmatrix},
\label{eq:padded_L_block_structure}
\end{equation}
where $\mb{D}_- = \operatorname{diag}(\mb{A}_{-m}^{(n)}, \ldots, \mb{A}_{-(n+1)}^{(n)})$ and $\mb{D}_+ = \operatorname{diag}(\mb{A}_{n+1}^{(n)}, \ldots, \mb{A}_{m}^{(n)})$, with $\mb{A}_k^{(n)} = j(\omega_f + k\omega_0)\mb{I} - \hat{\mb{J}}_0^{(n)}$ the uncoupled diagonal blocks using the Jacobian from the $n$-truncated base flow. Under the regularity assumption of Section~\ref{sec:convergence}, the blocks $\mb{A}_k^{(n)}$ are invertible for all $k$.

Similarly, define the padded input matrix $\widetilde{\mb{B}}_{\text{HR}}^{(n\to m)}$ by
\begin{equation}
\widetilde{\mb{B}}_{\text{HR}}^{(n\to m)} =
\begin{bmatrix}
\mb{0} \\ \mb{B}_{\text{HR}}^{(n)} \\ \mb{0}
\end{bmatrix}
= \mb{B}_{\text{HR}}^{(m)},
\label{eq:padded_B_block_structure}
\end{equation}
where the equality holds because $\mb{B}_{\text{HR}}^{(n)}$ is nonzero only in the $k=0$ block.

Since $\widetilde{\mb{L}}_{\text{HR}}^{(n\to m)}$ is block-diagonal and $\widetilde{\mb{B}}_{\text{HR}}^{(n\to m)}$ is zero outside the central block, the padded resolvent has the form
\begin{equation}
\widetilde{\mb{R}}_{\text{HR}}^{(n\to m)} = (\widetilde{\mb{L}}_{\text{HR}}^{(n\to m)})^{-1}\widetilde{\mb{B}}_{\text{HR}}^{(n\to m)} =
\begin{bmatrix}
\mb{0} \\ \mb{R}_{\text{HR}}^{(n)} \\ \mb{0}
\end{bmatrix}.
\label{eq:padded_R_block_structure}
\end{equation}
This zero-padded embedding preserves all singular values of $\mb{R}_{\text{HR}}^{(n)}$ (\citet{HornJohnson2013}, Theorem 7.3.3); in particular, $\sigma_1(\widetilde{\mb{R}}_{\text{HR}}^{(n\to m)}) = \sigma^{(n)}$.

The truncation error $\mb{L}_{\text{HR}}^{(m)} - \widetilde{\mb{L}}_{\text{HR}}^{(n\to m)}$ has two contributions: (i) off-diagonal couplings neglected in the padding, and (ii) Jacobian coefficient differences due to base flow truncation:
\begin{equation}
\mb{L}_{\text{HR}}^{(m)} - \widetilde{\mb{L}}_{\text{HR}}^{(n\to m)} =
\begin{bmatrix}
\mb{E}_{-,-} & \mb{E}_{-,0} & \mb{E}_{-,+} \\
\mb{E}_{0,-} & \mb{E}_{0,0} & \mb{E}_{0,+} \\
\mb{E}_{+,-} & \mb{E}_{+,0} & \mb{E}_{+,+}
\end{bmatrix}.
\label{eq:truncation_error}
\end{equation}
The central block captures coefficient differences:
\begin{equation}
[\mb{E}_{0,0}]_{k,\ell} = -(\hat{\mb{J}}_{k-\ell}^{(m)} - \hat{\mb{J}}_{k-\ell}^{(n)}), \quad |k|, |\ell| \leq n.
\end{equation}
The inner-to-outer couplings contain the neglected off-diagonal terms:
\begin{equation}
[\mb{E}_{0,+}]_{k,\ell} = -\hat{\mb{J}}_{k-\ell}^{(m)}, \quad |k| \leq n, \; n+1 \leq \ell \leq m,
\end{equation}
\begin{equation}
[\mb{E}_{0,-}]_{k,\ell} = -\hat{\mb{J}}_{k-\ell}^{(m)}, \quad |k| \leq n, \; -m \leq \ell \leq -(n+1),
\end{equation}
with $\mb{E}_{+,0} = \mb{E}_{0,+}^T$ and $\mb{E}_{-,0} = \mb{E}_{0,-}^T$. The outer diagonal blocks have coefficient differences on the diagonal and full couplings off-diagonal (since $\widetilde{\mb{L}}_{\text{HR}}^{(n\to m)}$ has only diagonal entries $\mb{A}_k^{(n)}$ there):
\begin{equation}
[\mb{E}_{\pm,\pm}]_{k,\ell} =
\begin{cases}
-(\hat{\mb{J}}_{0}^{(m)} - \hat{\mb{J}}_{0}^{(n)}) & k = \ell, \\
-\hat{\mb{J}}_{k-\ell}^{(m)} & k \neq \ell.
\end{cases}
\end{equation}
The outer-to-outer cross couplings are entirely neglected in the padding:
\begin{equation}
[\mb{E}_{-,+}]_{k,\ell} = -\hat{\mb{J}}_{k-\ell}^{(m)}, \quad -m \leq k \leq -(n+1), \; n+1 \leq \ell \leq m,
\end{equation}
with $\mb{E}_{+,-} = \mb{E}_{-,+}^T$. Under assumptions (J1) and (J2), all blocks are $O(e^{-\alpha n})$.

\textbf{Example ($n=1$, $m=2$).} For concreteness, consider padding from $n=1$ to $m=2$ harmonics. Using shorthand $\mb{A}_k^{(n)} = j(\omega_f + k\omega_0)\mb{I} - \hat{\mb{J}}_0^{(n)}$ and $\delta\mb{J}_k = \hat{\mb{J}}_k^{(2)} - \hat{\mb{J}}_k^{(1)}$, the operators are (rows/columns ordered $k = -2, -1, 0, 1, 2$):
\begin{equation}
\mb{L}_{\text{HR}}^{(2)} =
\begin{bmatrix}
\mb{A}_{-2}^{(2)} & -\hat{\mb{J}}_{-1}^{(2)} & -\hat{\mb{J}}_{-2}^{(2)} & -\hat{\mb{J}}_{-3}^{(2)} & -\hat{\mb{J}}_{-4}^{(2)} \\
-\hat{\mb{J}}_{1}^{(2)} & \mb{A}_{-1}^{(2)} & -\hat{\mb{J}}_{-1}^{(2)} & -\hat{\mb{J}}_{-2}^{(2)} & -\hat{\mb{J}}_{-3}^{(2)} \\
-\hat{\mb{J}}_{2}^{(2)} & -\hat{\mb{J}}_{1}^{(2)} & \mb{A}_{0}^{(2)} & -\hat{\mb{J}}_{-1}^{(2)} & -\hat{\mb{J}}_{-2}^{(2)} \\
-\hat{\mb{J}}_{3}^{(2)} & -\hat{\mb{J}}_{2}^{(2)} & -\hat{\mb{J}}_{1}^{(2)} & \mb{A}_{1}^{(2)} & -\hat{\mb{J}}_{-1}^{(2)} \\
-\hat{\mb{J}}_{4}^{(2)} & -\hat{\mb{J}}_{3}^{(2)} & -\hat{\mb{J}}_{2}^{(2)} & -\hat{\mb{J}}_{1}^{(2)} & \mb{A}_{2}^{(2)}
\end{bmatrix},
\end{equation}
\begin{equation}
\widetilde{\mb{L}}_{\text{HR}}^{(1\to 2)} =
\begin{bmatrix}
\mb{A}_{-2}^{(1)} & 0 & 0 & 0 & 0 \\
0 & \mb{A}_{-1}^{(1)} & -\hat{\mb{J}}_{-1}^{(1)} & -\hat{\mb{J}}_{-2}^{(1)} & 0 \\
0 & -\hat{\mb{J}}_{1}^{(1)} & \mb{A}_{0}^{(1)} & -\hat{\mb{J}}_{-1}^{(1)} & 0 \\
0 & -\hat{\mb{J}}_{2}^{(1)} & -\hat{\mb{J}}_{1}^{(1)} & \mb{A}_{1}^{(1)} & 0 \\
0 & 0 & 0 & 0 & \mb{A}_{2}^{(1)}
\end{bmatrix},
\end{equation}
\begin{equation}
\mb{L}_{\text{HR}}^{(2)} - \widetilde{\mb{L}}_{\text{HR}}^{(1\to 2)} =
\begin{bmatrix}
-\delta\mb{J}_{0} & -\hat{\mb{J}}_{-1}^{(2)} & -\hat{\mb{J}}_{-2}^{(2)} & -\hat{\mb{J}}_{-3}^{(2)} & -\hat{\mb{J}}_{-4}^{(2)} \\
-\hat{\mb{J}}_{1}^{(2)} & -\delta\mb{J}_{0} & -\delta\mb{J}_{-1} & -\delta\mb{J}_{-2} & -\hat{\mb{J}}_{-3}^{(2)} \\
-\hat{\mb{J}}_{2}^{(2)} & -\delta\mb{J}_{1} & -\delta\mb{J}_{0} & -\delta\mb{J}_{-1} & -\hat{\mb{J}}_{-2}^{(2)} \\
-\hat{\mb{J}}_{3}^{(2)} & -\delta\mb{J}_{2} & -\delta\mb{J}_{1} & -\delta\mb{J}_{0} & -\hat{\mb{J}}_{-1}^{(2)} \\
-\hat{\mb{J}}_{4}^{(2)} & -\hat{\mb{J}}_{3}^{(2)} & -\hat{\mb{J}}_{2}^{(2)} & -\hat{\mb{J}}_{1}^{(2)} & -\delta\mb{J}_{0}
\end{bmatrix}.
\end{equation}
The central $3\times 3$ block contains the coefficient differences $\delta\mb{J}_k$, while the outer entries are the neglected couplings $\hat{\mb{J}}_k^{(2)}$.

\end{document}